\let\Cref\Cref
\newcommand{\excise}[1]{}
\theoremstyle{plain}
\newtheorem{theorem}{Theorem}[section]
\newtheorem{lemma}[theorem]{Lemma}
\newtheorem{proposition}[theorem]{Proposition}
\newtheorem{corollary}[theorem]{Corollary}
\newcommand{\circled}[1]{\textcircled{\raisebox{-0.9pt}{#1}}}
\theoremstyle{remark}
\newtheorem{definition}[theorem]{Definition}
\newcommand{\pushright}[1]{\ifmeasuring@#1\else\omit\hfill$\displaystyle#1$\fi\ignorespaces}
\newcommand{\pushleft}[1]{\ifmeasuring@#1\else\omit$\displaystyle#1$\hfill\fi\ignorespaces}
\DeclareMathOperator\GP{GP}
\title{On Statistical Inference for Rates of Change in Spatial Processes over Riemannian Manifolds}
\author{Didong Li$^{\star}$}
\author{Aritra Halder$^{\dagger}$}
\author{Sudipto Banerjee$^{\ddagger}$}
\thanks{$^{\star}$Department of Biostatistics, University of North Carolina at Chapel Hill, NC}
\thanks{$^{\dagger}$Department of Biostatistics and Epidemiology, Drexel University, Philadelphia, PA}
\thanks{$^{\ddagger}$Department of Biostatistics, University of California, Los Angeles, LA}
\begin{document}
\newcommand{\parallelsum}{\mathbin{\!/\mkern-5mu/\!}}

\newcommand{\given}{\, |\,}

\newcommand\numberthis{\addtocounter{equation}{1}\tag{\theequation}}

\newcommand\<{\langle}
\newcommand\T{^\top}
\newcommand\CC{\mathbb{C}}
\newcommand\FF{\mathbb{F}}
\newcommand\NN{\mathbb{N}}
\newcommand\QQ{\mathbb{Q}}
\newcommand\RR{\mathbb{R}}
\newcommand\PP{\mathbb{P}}
\newcommand\ZZ{\mathbb{Z}}
\newcommand\EE{\mathbb{E}}
\newcommand\MM{\mathcal{M}}
\newcommand\WW{\mathcal{W}}
\newcommand\kk{\mathfrak{k}}
\newcommand\mm{\mathfrak{m}}
\newcommand\nn{\mathfrak{n}}
\newcommand\pp{\mathfrak{p}}
\newcommand\quq{\mathfrak{q}}
\newcommand\Gg{\mathfrak{g}}
\newcommand\GL{\mathrm{GL}}
\newcommand\SO{\mathrm{SO}}
\newcommand\Z{\widetilde{Z}}
\newcommand\from{\leftarrow}
\newcommand\into{\hookrightarrow}
\newcommand{\tr}{\operatorname{tr}}
\newcommand\onto{\twoheadrightarrow}
\newcommand\minus{\smallsetminus}
\newcommand\goesto{\rightsquigarrow}
\newcommand\dirlim{\varinjlim}
\newcommand\invlim{\varprojlim}
\renewcommand\>{\rangle}
\renewcommand\iff{\Leftrightarrow}
\renewcommand\implies{\Rightarrow}
\renewcommand{\algorithmicrequire}{\textbf{Input:}}
\renewcommand{\algorithmicensure}{\textbf{Output:}}
\newcommand{\bfx}{{X}}
\newcommand{\cov}{\mathrm{Cov}}
\newcommand{\vect}{\mathrm{Vec}}
\newcommand{\sg}{\Sigma}
\newcommand{\bfv}{{v}}
\newcommand{\bfc}{{c}}
\newcommand{\cnh}{\widehat{c}}
\newcommand{\hbfv}{\widehat{v}}
\newcommand{\vnh}{\widehat{V}}
\newcommand{\rnh}{\widehat{r}}
\newcommand{\PD}{\mathrm{PD}}
\newcommand{\eps}{\varepsilon}
\newcommand\blfootnote[1]{%
  \begingroup
  \renewcommand\thefootnote{}\footnote{#1}%
  \addtocounter{footnote}{-1}%
  \endgroup
}

\newcommand\scalemath[2]{\scalebox{#1}{\mbox{\ensuremath{\displaystyle #2}}}}

\newcommand\ol[1]{{\overline{#1}}}

\renewcommand\mod[1]{\ (\mathrm{mod}\ #1)}
\DeclarePairedDelimiterX{\infdivx}[2]{(}{)}{%
	#1\;\delimsize\|\;#2%
}
\newcommand{\KL}{KL\infdivx}
\DeclarePairedDelimiter{\norm}{\lVert}{\rVert}

\newcommand\twobytwo[1]{\left[\begin{array}{@{}cc@{}}#1\end{array}\right]}

\newcommand\tworow[1]{\left[\begin{array}{@{}c@{}}#1\end{array}\right]}


\newcommand\red[1]{\marginpar{\vspace{-1.4ex}\footnotesize{\color{red}#1}}}
\newcommand\score[1]{\marginpar{\colorbox{yellow}{#1/10}}}
\newcommand\total[1]{\marginpar{\colorbox{yellow}{\huge #1/100}}}
\newcommand\magenta[1]{\colorbox{magenta}{$\!$#1$\!$}}
\newcommand\yellow[1]{\colorbox{yellow}{$\!$#1$\!$}}
\newcommand\green[1]{\colorbox{green}{$\!$#1$\!$}}
\newcommand\cyan[1]{\colorbox{cyan}{$\!$#1$\!$}}
\newcommand\rmagenta[1]{\red{\magenta{\phantom{:}}\,: #1}}
\newcommand\ryellow[1]{\red{\yellow{\phantom{:}}\,: #1}}
\newcommand\rgreen[1]{\red{\green{\phantom{:}}\,: #1}}
\newcommand\rcyan[1]{\red{\cyan{\phantom{:}}\,: #1}}
\newcommand{\RNum}[1]{\uppercase\expandafter{\romannumeral #1\relax}}

\xdefinecolor{dukeblue}{rgb}{0.004,0.129,0.412}
\newcommand{\dl}[1]{\textcolor{dukeblue}{\textsf{DL: #1}}}
\newcommand{\ah}[1]{\textcolor{teal}{\textsf{AH: #1}}}
\newcommand{\indep}{\perp \!\!\! \perp}
\DeclarePairedDelimiter{\ceil}{\lceil}{\rceil}
\begin{abstract}
Statistical inference for spatial processes from partially realized or scattered data has seen voluminous developments in diverse areas ranging from environmental sciences to business and economics. Inference on the associated rates of change has seen some recent developments. The literature has been restricted to Euclidean domains, where inference is sought on directional derivatives, rates along a chosen direction of interest, at arbitrary locations. Inference for higher order rates, particularly directional curvature has also proved useful in these settings. Modern spatial data often arise from non-Euclidean domains. This manuscript particularly considers spatial processes defined over compact Riemannian manifolds. We develop a comprehensive inferential framework for spatial rates of change for such processes over vector fields. In doing so, we formalize smoothness of process realizations and construct differential processes---the derivative and curvature processes. We derive conditions for kernels that ensure the existence of these processes and establish validity of the joint multivariate process consisting of the ``parent'' Gaussian process (GP) over the manifold and the associated differential processes. Predictive inference on these rates is devised 
conditioned on the realized process over the manifold. Manifolds arise as polyhedral meshes in practice. The success of our simulation experiments for assessing derivatives for processes observed over such meshes validate our theoretical findings. By enhancing our understanding of GPs on Riemannian manifolds, this manuscript unlocks a variety of potential applications in areas of machine learning and statistics where GPs have seen wide usage. Our results aid in the selection of suitable kernels when seeking inference for differential processes. 
We 
propose a fully model-based approach to inference on the differential processes arising from a spatial process from partially observed or realized data across scattered location on a manifold. 
\end{abstract}
\maketitle

\section{Introduction}\label{sec: intro}
Statistical modeling and inference for spatially oriented data comprise a rapidly expanding domain in machine learning and data science. Point-referenced, or \emph{geostatistical}, spatial data map variables of interest to coordinates of the locations where they are observed. Analysis of such data presumes, for a study region $\mathcal{D}$, a collection of random variables $\{Z(x) : x\in \mathcal{D}\}$, where $x$ denotes the coordinates of a spatial location in $\mathcal{D}$ on which we seek to impose a probability law. Gaussian processes (GPs), in particular, have been widely employed for modeling such data because of their connections with traditional geostatistical modeling tools such as variograms and intrinsically stationary processes.      

Of increasing inferential interest is the study of local properties of the estimated random field in order to obtain deeper insights into the nature of latent dependence within the studied response. Specific inferential interest resides with local features of the surface, including rates of change of the process at arbitrary points of interest in the region of study, to identify lurking explanatory variables or risk factors. This exercise is often referred to as ``wombling'', named after a seminal paper by \cite{womble1951differential}; \citep[also see][]{gleyze2001wombling}.  Rather than visual inspection of a random field's local smoothness using an interpolated map, formal statistical inference on the directional rates of change is possible using a sufficiently smooth random field specification. A rather substantial scientific literature exists on modeling and inference for spatial gradients and fully model-based ``wombling'' that span theory, methods and diverse data-driven applications \citep[see, e.g.,][for inferential developments involving spatial gradients from diverse modeling and application perspectives]{morris1993bayesian, banerjee2003directional, majumdar2006gradients, liang2009bayesian, heaton2014wombling, terres2015using, quick2015bayesian, wang2016estimating, terres2016spatial, wang2018process, halder2024bayesian}.  

The aforementioned literature, while significant in its scope of applications, has been restricted, almost exclusively, to Euclidean domains with the possible exception of \cite{wang2018process} who studied gradients for directional and circular data {
and \cite{coveney_gaussian_2020} who consider applications for gradients arising from interpolated GPs over manifolds.} However, there has been growing scientific interest in analyzing spatial data on non-Euclidean domains, which, not surprisingly, has produced notable developments on GPs over Riemannian manifolds. For example, spatially referenced climate science data involving geopotential height, temperature, and humidity are measured at global scales and are more appropriately treated as (partial) realizations of a spatial process over a sphere or ellipsoid \citep[see, e.g.,][]{banerjee2005geodetic, jun2008nonstationary, jeong2015class}. In biomedical sciences, we also see substantial examples of data over domains that are defined by a three-dimensional shape of an organ \citep[see, e.g.,][and references therein]{gao2019gaussian}. Replacing the Euclidean distance in an isotropic covariogram in a plane by the geodesic distance to define a ``Mat\'ern'' covariogram on a Riemannian manifold is a natural thought that, however, does not necessarily produce a valid covariogram on the manifold. For example, this naive generalization is not valid for $\nu=\infty$ \citep{feragen2015geodesic}, unless the manifold is flat. If we restrict ourselves to spheres, Mat\'ern with $\nu\in(1/2,\infty)$ is still invalid \citep{gneiting2013strictly}. While Mat\'ern-like covariograms derived from chordal, circular and Legendre Mat\'ern covariograms have been studied \citep{jeong2015covariance, porcu2016spatio, guinness2016isotropic, guella2018strictly,de2018regularity,alegria2021f}, these covariograms are constructed specifically with respect to the geometry of the sphere and do not generalize to generic compact Riemannian manifolds.   

We choose a family of Mat\'ern covariograms in compact Riemannian manifolds that utilizes a stochastic partial differential equation representation using the Laplace--Beltrami operator $\Delta_g$ of the Mat\'ern covariogram in an Euclidean space that was shown by \cite{whittle1963stochastic} and has since been investigated and developed in different directions by several scholars \citep[see, e.g.,][among others]{lindgren2011explicit, bolin2011spatial, lang2015isotropic, herrmann2020multilevel, borovitskiy2020matern, borovitskiy2021matern}. This representation yields a valid positive definite function for any $\nu$ on any compact Riemannian manifold $\MM$. A recent paper by \cite{li2023inference} offers theoretical results on statistical inference for the parameters of such families of covariograms used to construct GPs on compact Riemannian manifolds using a finite sample of observations. 

We develop formal model based inference on rates of change for spatial random fields over compact Riemannian manifolds. Such inference will require smoothness considerations of the process \citep[extending results in][who investigated smoothness of spatial processes in Euclidean domains]{adler1981geometry, kent1989continuity, stein1999interpolation, banerjee2003smoothness}. Observations over a finite set of locations from these processes cannot visually inform about smoothness, which is typically specified from mechanistic considerations using families of covariograms that are valid over manifolds. Recently, valid covariograms for smooth GPs on general Riemannian manifolds have been constructed based upon heat equations, Brownian motion and diffusion models on manifolds \citep{castillo2014thomas, niu2019intrinsic, dunson2022graph}. However, such covariograms do not model smoothness \citep[see, e.g.,][and references therein]{gao2019gaussian} in a flexible manner as is offered by the Mat\'ern covariogram in Euclidean domains. Focusing on mean square differentiability \citep{stein1999interpolation, banerjee2003smoothness} rather than almost sure smoothness \citep{kent1989continuity} for ease of formulation (it has also been demonstrated to produce effective inference for rates of change on partially realized fields using finite data), we construct a joint (multivariate) latent spatial process consisting of the \emph{parent} process and its derivatives. We establish conditions on the covariograms for the existence of such  processes (derivative and curvature) on manifolds and subsequently establish the relevant distribution theory required for spatial interpolation of derivatives and curvatures at arbitrary points.      

{
Statistical estimation is based on computational approaches in signal processing and Markov chain Monte Carlo methods for Bayesian inference. For practical purposes, manifolds are customarily represented as polyhedral meshes embedded in a 3-dimensional space, often referred to as ``surfaces''. In devising Bayesian computation for directional differential processes, we rely on tools from the mesh processing literature \citep[see, e.g.,][]{eldar1997farthest,pauly2002efficient,brenner2008mathematical} 
and discrete differential geometry \citep[see, e.g.,][]{crane2018discrete}. 
We offer a fully likelihood-based inferential framework for fitting GPs to scattered and partially observed data over meshes, and subsequently develop computational tools that enable probabilistic inference on directional derivatives at arbitrary locations on a grid-like point cloud along vector fields of choice. We provide open-source computational resources that implement our methods for public testing and reproducibility.
}

The balance of the paper evolves as follows. Section~\ref{sec:cont} begins with some new results on the continuity of GPs over compact Riemannian manifolds and Section~\ref{sec:diff} formally defines the derivative and curvature processes from valid covariograms on compact manifolds. Section~\ref{sec:bayes} devises a Bayesian inferential framework to conduct inference on derivative and curvature processes by sampling from their posterior predictive distributions at arbitrary points in the manifold. Of particular relevance is that this framework allows us to infer on these processes at the residual scale after accounting for explanatory variables, risk factors, and confounders as demanded by the specific application. This is followed by 
concrete examples of spheres and surfaces in a 3-dimensional space (\Cref{sec:examples}). \Cref{sec:experiments} outlines the results for simulation experiments on the sphere and the Stanford Bunny (SB) 
obtained from the Stanford 3D scanning repository. Section~\ref{sec: discussion} concludes the manuscript with a discussion. Technicalities of proofs and supporting details for computation on polyhedral meshes are housed in the Appendix.  

\section{Continuity of Gaussian processes on manifolds}\label{sec:cont}
We begin with some notation. Throughout this paper, we assume that $\MM$ is a compact $p$-dimensional Riemannian manifold with Riemannian metric, $g$, and Laplace-Beltrami operator, $\Delta_g$. Let $\{\lambda_l,f_l\}_{l=0}^\infty$ be the spectrum of $-\Delta_g$, where $\lambda_l$'s are in ascending order, and let $Z(x) \sim \GP(0, K(\cdot, \cdot))$ be a zero-centered GP endowing a probability law on the uncountable set $\{Z(x) : x\in \MM\}$, where $K(x,x')$ is a positive definite covariance function. $K$ is said to be isotropic if $K(x,x')=K(d_{\mathcal{M}}(x,x'))$, i.e. it is a function of $d_{\mathcal{M}}(x,x')$, the geodesic distance between $x$ and $x'$.

 Compactness is crucial. For a non-compact manifold, the spectrum is not necessarily discrete and the construction of valid covariance functions presents significant challenges. We refer the reader to \cite{azangulov2024stationary,azangulov2024stationary2}, where fairly sophisticated mathematical constructions of kernels are discussed on non-compact manifolds, Lie groups, and their homogeneous spaces. Such constructions are not directly extensible to a general non-compact manifold. As a result, compact Riemannian manifolds  are of interest in this paper. In particular, we are interested in studying the smoothness of process realizations, $Z(\cdot)$, in the {\em mean-squared} sense. In particular, we denote mean-squared continuous as MSC, and $k$-th order mean-squared differentiable as $k$-MSD. We establish results pertaining to the same. We define continuity, which is followed by definitions and results concerning differentiability of the first and second orders. Our main focus for $K(x, x')$ is the Mat\'ern type:
\begin{align}
 K(x,x') &= \frac{\sigma^2}{C_{\nu,\alpha}}\sum_{l=0}^\infty \left(\alpha^2+\lambda_l\right)^{-\nu-\frac{p}{2}}f_l(x)f_l(x'),\label{eqn:Matern}\\
 K(x,x') &= \frac{\sigma^2}{C_{\infty,\alpha}}\sum_{l=0}^\infty e^{-\frac{\lambda_l}{2\alpha^2}}f_l(x)f_l(x'),\label{eqn:RBF}
\end{align}
where $\{\sigma^2,\alpha,\nu\}$ are parameters and $C_{\nu,\alpha}$ is a normalizing constant such that the average variance over $\MM$ satisfies ${\rm vol}_g(\MM)^{-1}\int_{\mathcal{M}} K(x,x)\; {\rm d}x = \sigma^2$. 
The parameter $\nu$ is often termed the smoothness or the fractal parameter. The covariance in \Cref{eqn:RBF} is the squared exponential covariogram or the radial basis function (RBF). We connect the notion of process smoothness with parameters specifying the above covariance kernels.

In the Euclidean domain, it is well-known that a GP specified using a Mat\'ern kernel is $\ceil{\nu}-1$ times mean-square differentiable ($(\ceil{\nu}-1)$--MSD) \citep[see, e.g.,][]{adler1981geometry,williams2006gaussian,banerjee2003directional}. We establish a similar result for Riemannian manifolds using mean-squared continuity 
on $\MM$.
\begin{definition}
We say that $Z$ is 
MSC (or, 0-MSD) 
at $x\in\MM$ if $\lim\limits_{t \to 0}\EE(Z(\gamma(t))-Z(x))^2=0$ for any geodesic $\gamma$ with $\gamma(0)=x$. The process $Z$ is said to be MSC if it is MSC at any $x\in\MM$.   
\end{definition}

The next theorem 
connects MSC with the smoothness parameter $\nu$ in the Mat\'ern kernel. It's proof, housed in \Cref{apdx:proof_sec:cont}, shows the role played by compactness of $\MM$. 
\begin{theorem}\label{thm:MSC}
The Mat\'ern covariance function in \Cref{eqn:Matern} ensures that Z is MSC if $\nu>\frac{p-1}{2}$. The RBF function in \Cref{eqn:RBF} always ensures that $Z$ is MSC. 
\end{theorem}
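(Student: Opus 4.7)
The starting point is the identity
\begin{equation*}
\EE\bigl(Z(\gamma(t))-Z(x)\bigr)^{2} \;=\; K(x,x)+K(\gamma(t),\gamma(t))-2K(x,\gamma(t)) \;=\; \frac{\sigma^{2}}{C_{\nu,\alpha}}\sum_{l=0}^{\infty}(\alpha^{2}+\lambda_{l})^{-\nu-p/2}\bigl(f_{l}(x)-f_{l}(\gamma(t))\bigr)^{2},
\end{equation*}
obtained by inserting the spectral expansion of $K$. Since each eigenfunction $f_l$ is smooth on $\MM$ (eigenfunctions of $-\Delta_g$ on a compact manifold are $C^{\infty}$), every term on the right tends to $0$ as $t\to 0$. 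The plan is therefore to justify interchanging $\lim_{t\to 0}$ with the infinite sum via dominated convergence, and MSC will follow immediately.

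For the dominating bound, note that $\bigl(f_{l}(x)-f_{l}(\gamma(t))\bigr)^{2}\le 4\|f_{l}\|_{\infty}^{2}$, so a sufficient condition is the summability of
\begin{equation*}
\sum_{l=0}^{\infty}(\alpha^{2}+\lambda_{l})^{-\nu-p/2}\|f_{l}\|_{\infty}^{2}.
\end{equation*}
To bound this I would invoke two classical inputs from spectral geometry on compact Riemannian manifolds. First, Weyl's law gives $\lambda_{l}\asymp l^{2/p}$ as $l\to\infty$. Second, H\"ormander's sup-norm estimate for $L^{2}$-normalized Laplace eigenfunctions yields $\|f_{l}\|_{\infty}\lesssim \lambda_{l}^{(p-1)/4}$. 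Combining these,
\begin{equation*}
(\alpha^{2}+\lambda_{l})^{-\nu-p/2}\|f_{l}\|_{\infty}^{2}\;\lesssim\;\lambda_{l}^{-\nu-p/2}\lambda_{l}^{(p-1)/2}\;=\;\lambda_{l}^{-\nu-1/2}\;\asymp\; l^{-(2\nu+1)/p},
\end{equation*}
which is summable precisely when $(2\nu+1)/p>1$, i.e.\ $\nu>(p-1)/2$. This is exactly the hypothesis stated in the theorem.

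For the RBF kernel the coefficients $e^{-\lambda_{l}/(2\alpha^{2})}$ decay exponentially in $\lambda_{l}$, which (together with $\|f_{l}\|_{\infty}^{2}\lesssim\lambda_{l}^{(p-1)/2}$ and the polynomial growth of $\lambda_{l}$ from Weyl) trivially dominates any polynomial weight, so the analogous series converges unconditionally and MSC holds for every $\alpha>0$, with no constraint on a smoothness parameter.

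\textbf{Main obstacle.} The routine step is verifying termwise convergence; the substantive content, and the place where compactness of $\MM$ enters decisively, is securing a uniform-in-$t$ dominating series. The nontrivial ingredients are the H\"ormander $L^{\infty}$-bound on eigenfunctions (which produces the $(p-1)/2$ factor responsible for the threshold) and Weyl's asymptotic law for $\lambda_{l}$; both rely crucially on $\MM$ being compact, since on noncompact manifolds neither a discrete spectrum nor such universal eigenfunction bounds are available. Once these are in hand, the proof is essentially a dominated-convergence argument combined with the ensuing arithmetic $-\nu-p/2+(p-1)/2=-\nu-1/2$ that pins down the sharp threshold $\nu>(p-1)/2$.
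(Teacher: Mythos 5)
Your proposal is correct and follows essentially the same route as the paper's proof: the same spectral identity for $\EE(Z(\gamma(t))-Z(x))^2$, the same sup-norm eigenfunction bound $\|f_l\|_\infty\lesssim\lambda_l^{(p-1)/4}$ (the paper cites Donnelly), Weyl's law, and a uniform dominating series (Weierstrass M-test in the paper, dominated convergence in yours) leading to the identical threshold arithmetic $\nu>(p-1)/2$. The RBF case is likewise handled in both by noting the exponential coefficients dominate any polynomial growth.
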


We compare this result with its Euclidean analogue: continuity of process realizations arising from a GP with a Mat\'ern kernel is solely determined by the smoothness (or fractal) parameter $\nu$. For example, the exponential kernel ($\nu=1/2$) produces MSC realizations. For compact Riemannian manifolds, \Cref{thm:MSC} posits that continuity is determined by both $\nu$ and the dimension, $p$, of $\MM$. The next result considers isotropic covariance functions, $K(x,x')=K(d_\MM(x,x'))$, where $d_\MM$ is the geodesic distance on $\MM$. The continuity of process realizations is determined by the behavior of $K$ near the origin. The following result resembles its Euclidean counterpart \citep[see, e.g.,][for almost sure, mean square theory for smoothness and further developments respectively]{kent1989continuity,stein1999interpolation,banerjee2003smoothness} and is true for {\em any} isotropic $K$.
\begin{proposition}\label{prop:MSC_iso}
If $K$ is isotropic, then $Z$ is MSC if and only if $K:[0,\infty)\to \RR$ is continuous at $0$. 
\end{proposition}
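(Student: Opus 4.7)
The plan is to reduce the MSC condition at a point $x$ to a statement about the one-variable function $K$ evaluated along a geodesic through $x$. Specifically, if $\gamma$ is any geodesic with $\gamma(0)=x$, then using isotropy,
\begin{align*}
\EE\bigl(Z(\gamma(t))-Z(x)\bigr)^2 &= K\!\left(d_\MM(\gamma(t),\gamma(t))\right) + K\!\left(d_\MM(x,x)\right) - 2K\!\left(d_\MM(\gamma(t),x)\right)\\
&= 2K(0) - 2K\!\left(d_\MM(\gamma(t),x)\right).
\end{align*}
Thus MSC at $x$ along $\gamma$ is equivalent to $K\!\left(d_\MM(\gamma(t),x)\right)\to K(0)$ as $t\to 0$. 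This identity is the whole backbone of the proof; from here, both implications reduce to understanding the behavior of $d_\MM(\gamma(t),x)$ for small $t$.

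For the ``if'' direction, I would argue as follows. Since $\MM$ is compact, it is geodesically complete, and $\gamma$ is a continuous map, so $d_\MM(\gamma(t),x)\to 0$ as $t\to 0$. If $K$ is continuous at $0$, then $K(d_\MM(\gamma(t),x))\to K(0)$, yielding $\EE(Z(\gamma(t))-Z(x))^2 \to 0$. Since $x$ and $\gamma$ were arbitrary, $Z$ is MSC.

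For the ``only if'' direction, the key observation is that for each $x\in\MM$ the injectivity radius $\mathrm{inj}(x)$ is strictly positive (again by compactness, it is even bounded below by a positive constant). Fix any $x\in\MM$ and choose a unit-speed geodesic $\gamma$ emanating from $x$; then for all sufficiently small $t>0$ we have $d_\MM(\gamma(t),x)=t$. Consequently, for any sequence $r_n\downarrow 0$ with $r_n<\mathrm{inj}(x)$,
\[
K(r_n) = K\!\left(d_\MM(\gamma(r_n),x)\right) \longrightarrow K(0),
\]
by MSC of $Z$ at $x$ along $\gamma$. Hence $K$ is right-continuous at $0$, which is continuity at $0$ on $[0,\infty)$.

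The only genuinely geometric input is the fact that unit-speed geodesics realize distance locally, i.e. $d_\MM(\gamma(t),x)=|t|$ for $|t|<\mathrm{inj}(x)$; I expect this to be the main point where a careful reader will want a justification, so I would cite compactness $\Rightarrow$ positive injectivity radius and the standard fact that radial geodesics in a normal neighborhood are distance-minimizing. Everything else is a direct computation using isotropy and the bilinearity of covariance, with no further obstacles.
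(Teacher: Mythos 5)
Your proof is correct and follows essentially the same route as the paper's: both reduce $\EE(Z(\gamma(t))-Z(x))^2$ to $2\left(K(0)-K(d_\MM(\gamma(t),x))\right)$ via isotropy and then use the fact that radial geodesics realize distance near $x$, so that the limit condition is exactly continuity of $K$ at $0$. Your version is merely more explicit about the two directions and the geometric justification (positive injectivity radius), which the paper leaves implicit.
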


\section{Differentiabilty of Gaussian processes on manifolds}\label{sec:diff}
Mean-square differentiability of process realizations in $\MM$ is studied through two differential processes (a) the derivative process, and (b) the curvature process. They require process realization to be once and twice differentiable respectively in the mean-squared sense. We first consider formalizing the theory for derivative processes. In what follows, the process is assumed to be MSC.

\subsection{The derivative process}\label{sec:deriv}

We begin with a definition for mean-square differentiable (1-MSD) processes, which is followed by a theorem that presents a sufficient condition for a GP specified by the Mat\'ern (and RBF) kernel for admitting a derivative process, which we shall refer to as the process being 1-MSD.

\begin{definition}\label{def:1-MSD}We say that $Z$ is 1-MSD at $x\in\MM$ if $\lim\limits_{t \to 0}\EE\left(\frac{Z(\gamma(t))-Z(x)}{t}\right)^2$ exists for any geodesic $\gamma$ with $\gamma(0)=x$. The process $Z$ is said to be 1-MSD if it is 1-MSD at any $x\in\MM$.   
\end{definition}

\begin{theorem}\label{thm:1MSD}
The GP defined using the Mat\'ern covariance function in \Cref{eqn:Matern} is 1-MSD if $\nu>\frac{p+1}{2}$. The GP defined using the RBF covariance function in \Cref{eqn:RBF} is 1-MSD. 
\end{theorem}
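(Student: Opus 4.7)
The plan is to mimic the proof scheme of \Cref{thm:MSC} but one order higher, working with the spectral representation \Cref{eqn:Matern}--\Cref{eqn:RBF} and then exploiting sharp $L^\infty$ estimates for the gradient of Laplace--Beltrami eigenfunctions. First I would expand the second moment of the difference quotient: since $Z\sim\GP(0,K)$,
\begin{equation*}
\EE\!\left(\frac{Z(\gamma(t))-Z(x)}{t}\right)^{2}
=\frac{1}{t^{2}}\Bigl(K(\gamma(t),\gamma(t))-2K(\gamma(t),x)+K(x,x)\Bigr).
\end{equation*}
Substituting the Mat\'ern spectral expansion and factoring the squared difference of eigenfunctions yields
\begin{equation*}
\EE\!\left(\frac{Z(\gamma(t))-Z(x)}{t}\right)^{2}
=\frac{\sigma^{2}}{C_{\nu,\alpha}}\sum_{l=0}^{\infty}
\bigl(\alpha^{2}+\lambda_{l}\bigr)^{-\nu-\frac{p}{2}}
\left(\frac{f_{l}(\gamma(t))-f_{l}(x)}{t}\right)^{2}.
\end{equation*}

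The next step is to show that each summand converges pointwise as $t\to 0$ and that the series admits a summable majorant, so that the limit can be passed inside the sum by dominated convergence. Since the $f_{l}$ are smooth (being eigenfunctions of $-\Delta_g$ on a smooth compact manifold) the chain rule along the geodesic gives
\begin{equation*}
\lim_{t\to 0}\frac{f_{l}(\gamma(t))-f_{l}(x)}{t}=df_{l}(\gamma'(0)).
\end{equation*}
To dominate the integrand uniformly in $t$, I would use the mean-value theorem along $\gamma$ together with the constant-speed property of geodesics to bound
\begin{equation*}
\left|\frac{f_{l}(\gamma(t))-f_{l}(x)}{t}\right|
\le \|\nabla f_{l}\|_{\infty}\,|\gamma'(0)|_{g}.
\end{equation*}
The key analytic input is H\"ormander's sup-norm estimate for gradients of $L^{2}$-normalized eigenfunctions on a compact $p$-manifold, $\|\nabla f_{l}\|_{\infty}\lesssim \lambda_{l}^{(p+1)/4}$, so the squared bound is of order $\lambda_{l}^{(p+1)/2}$. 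Combined with Weyl's law $\lambda_{l}\asymp l^{2/p}$, the majorant series is
\begin{equation*}
\sum_{l\ge 1}(\alpha^{2}+\lambda_{l})^{-\nu-p/2}\,\lambda_{l}^{(p+1)/2}
\;\asymp\;\sum_{l\ge 1} l^{(1-2\nu)/p},
\end{equation*}
which converges precisely when $\nu>(p+1)/2$, yielding the stated Mat\'ern threshold and the existence of the limit independently of the choice of geodesic $\gamma$.

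For the RBF case I would repeat the same spectral expansion with weights $e^{-\lambda_{l}/(2\alpha^{2})}$; the exponential decay in $\lambda_{l}$ crushes any polynomial growth coming from $\|\nabla f_{l}\|_{\infty}^{2}\lesssim \lambda_{l}^{(p+1)/2}$, so the dominating series converges for every $p$ and every $\alpha>0$, delivering 1-MSD unconditionally. The main obstacle is analytical rather than conceptual: one must justify the termwise differentiation/limit exchange, and this hinges on invoking the correct sharp spectral bound on $\|\nabla f_{l}\|_{\infty}$ (rather than the weaker Weyl-type pointwise bound used for \Cref{thm:MSC}). Once this ingredient is in place, the Weyl asymptotics and dominated convergence give the exponent count $-\nu-\tfrac{p}{2}+\tfrac{p+1}{2}=\tfrac{1}{2}-\nu$ on $\lambda_l$, which is where the critical threshold $\nu>(p+1)/2$ emerges.
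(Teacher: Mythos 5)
Your proposal is correct and follows essentially the same route as the paper's proof: spectral expansion of the second moment of the difference quotient, a Weierstrass/dominated-convergence argument to exchange limit and sum, the gradient sup-norm bound $\|\nabla f_l\|_\infty\lesssim\lambda_l^{(p+1)/4}$ (which the paper obtains by combining $\|\nabla f_l\|_\infty\lesssim\sqrt{\lambda_l}\,\|f_l\|_\infty$ with the H\"ormander-type bound $\|f_l\|_\infty\lesssim\lambda_l^{(p-1)/4}$), and Weyl's law to identify the threshold $\nu>\frac{p+1}{2}$, with the RBF case following from exponential decay of the spectral weights. The only difference is presentational: you invoke the gradient estimate as a single known result and make the mean-value-theorem domination step explicit, whereas the paper assembles the same bound from two cited ingredients.
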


Evidently, the derivative process characterizes the rate of change in the manifold $\MM$. The rate of change in a direction is often of interest. In the Euclidean case, this is achieved using directional derivatives that project the vector of partial derivatives along a chosen direction. The analog of directional derivatives on manifolds is somewhat opaque and needs to be elucidated for our subsequent developments. For 1-MSD GPs we formalize the notion of a {\em directional derivative} with respect to a vector field.
\begin{definition}
Let $Z$ be 1-MSD and $V\in\mathfrak{X(\MM)}$ be a fixed smooth vector field on $\MM$, where $\mathfrak{X}(\MM)$ denotes the space of all smooth vector fields on $\MM$, then the directional derivative process of $Z$ with respect to $V$, denoted by $D_VZ$, is defined as
\begin{equation}
    D_VZ(x)\coloneqq\lim_{t\to 0} \frac{Z(\exp_x(tV(x)))-Z(x)}{t}\;,
\end{equation}
where $\exp_x(\cdot):T_x\MM\to\MM$ is the Riemannian exponential map and $T_x\MM$ is the tangent space to $\MM$ at $x\in\MM$. Note that $V(x)\in T_x\MM$ by definition.
\end{definition}
In the remainder of this paper, we exclude the trivial case where, $V\equiv 0$, the zero vector field. Note that $D_VZ$ is well-defined in Definition~\ref{def:1-MSD}, where the geodesic, $\gamma(t) = \exp_x(tV(x))$. To simplify notation, we denote $V_x\coloneqq V(x)$. Ensuing developments will refer to $D_VZ$ as the derivative process, omitting ``directional'' to retain simplicity. The term ``gradient'' will be used to denote the mathematical operation.

\begin{lemma}\label{lem:Taylor_1}
If $Z$ is 1-MSD with $Z(\exp_x(tv))=Z(x)+tD_VZ(x)+ r(x,tv)$, then $\displaystyle \lim\limits_{t\to0}\frac{r(x,tv)}{t}=0$.
\end{lemma}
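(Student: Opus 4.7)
The statement is to be interpreted in the mean-square ($L^2$) sense, since $r(x,tv)$ is a random variable for each $t$. The proof is essentially a direct unpacking of the definition of $D_VZ(x)$.

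The first step is to rearrange the given expansion to isolate the remainder:
\[
\frac{r(x, tv)}{t} \;=\; \frac{Z(\exp_x(tv)) - Z(x)}{t} \;-\; D_VZ(x).
\]
By 1-MSD of $Z$ along the geodesic $\gamma(t) = \exp_x(tv)$, combined with the definition of $D_VZ(x)$ in Section~\ref{sec:deriv} as the limit of the difference quotient along this geodesic, the right-hand side converges to $0$ in $L^2$, which yields the claim.

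The one subtlety worth flagging is that Definition~\ref{def:1-MSD} is stated only as the existence of $\lim_{t\to 0}\EE[((Z(\gamma(t))-Z(x))/t)^2]$, whereas the rearrangement above presumes genuine $L^2$-convergence of the difference quotient to the random variable $D_VZ(x)$. To upgrade the one to the other, I would invoke the $L^2$ Cauchy criterion: setting $X_t = (Z(\gamma(t))-Z(x))/t$, one checks that
\[
\EE[(X_s - X_t)^2] \;=\; \EE[X_s^2] + \EE[X_t^2] - 2\,\EE[X_s X_t]
\]
tends to $0$ as $s,t \to 0^+$, which holds provided both $\EE[X_t^2]$ (given by Definition~\ref{def:1-MSD}) and the cross-term $\EE[X_s X_t]$ admit a common limit. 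The cross-term follows from continuity of the mixed second partial derivative of the kernel $K$ along the geodesic, a property underlying the sufficient conditions in Theorem~\ref{thm:1MSD}. Once $D_VZ(x)$ is identified as the $L^2$-limit of $X_t$, the displayed rearrangement above gives $r(x,tv)/t \to 0$ in $L^2$. The main (and only) obstacle is this identification of the 1-MSD hypothesis with $L^2$-convergence of the quotient; after that, the lemma is purely algebraic.
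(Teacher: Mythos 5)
Your proof is correct, but it takes a genuinely different route from the paper's. The paper proves the lemma by brute-force spectral computation: it expands $\EE\bigl(r(x,tv)/t\bigr)^2$ into six covariance terms, writes each via the Mat\'ern eigen-expansion $K(x,x')=\sum_l a_l f_l(x)f_l(x')$, interchanges limit and sum (using the uniform bounds on $\|\nabla f_l\|_\infty$ from the proof of \Cref{thm:1MSD}), and observes that the limits combine as $(1+1-2)\bigl((f_l\circ\gamma)'(0)\bigr)^2=0$ term by term. You instead observe that $r(x,tv)/t = X_t - D_VZ(x)$ with $X_t$ the difference quotient, reduce the lemma to the $L^2$-convergence of $X_t$, and establish that via the Cauchy criterion, i.e.\ convergence of the mixed second difference quotient $\EE[X_sX_t]=\frac{1}{st}\bigl[K(\gamma(s),\gamma(t))-K(\gamma(s),x)-K(x,\gamma(t))+K(x,x)\bigr]$ to the common limit $\nabla_{12}K(x,x)(v,v)$. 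This is the classical Lo\`eve-type criterion and it is the right abstraction here: it works for any kernel whose mixed second difference quotient converges at the diagonal, not just the Mat\'ern/RBF series, and it avoids a mild circularity in the paper's argument, which already treats $D_VZ(x)$ as an $L^2$ random variable with known cross-covariances $\EE[Z(y)D_VZ(x)]=\sum_l a_l f_l(y)(f_l\circ\gamma)'(0)$ before those are formally derived (in \Cref{lem:ZD_VZ} and \Cref{cly:ZD_VZMat}). The one point you gesture at rather than prove---that $\EE[X_sX_t]$ converges to the same limit as $\EE[X_t^2]$---does follow from the machinery of \Cref{thm:1MSD}: the same bound $|f_l(\gamma(s))-f_l(x)|/s\le C\|\nabla f_l\|_\infty\le C\lambda_l^{(p+1)/4}$ gives uniform convergence of the doubly-indexed series $\sum_l a_l\frac{(f_l(\gamma(s))-f_l(x))(f_l(\gamma(t))-f_l(x))}{st}$, whose limit as $s,t\to0$ is $\sum_l a_l\bigl((f_l\circ\gamma)'(0)\bigr)^2=\lim_t\EE[X_t^2]$. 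With that sentence added, your argument is complete; you correctly identified that the entire content of the lemma is the identification of the 1-MSD hypothesis with genuine $L^2$-convergence of the quotient.
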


Our subsequent developments will rely on cross-covariance functions of joint processes. For elucidation purposes, let $W(x) = (W_1(x),\ldots,W_q(x))\T$ be a $q\times 1$ stochastic process, where each $W_i(x)$ is a real-valued stochastic process over $\MM$. For GPs, this process is specified completely using a mean function, $\mu_i(x) := \mathbb{E}[W_i(x)]$ and a $q\times q$ matrix-valued cross-covariance function, $C(x,x') = (C_{ij}(x,x'))$, where each element is $C_{ij}(x,x') = \mbox{Cov}(W_i(x), W_j(x'))$ for $i,j=1,\ldots,q$. Although there is no loss of generality in assuming the process mean to be zero by absorbing the mean into a separate regression component in the model, as we will do here, modeling the cross-covariance function requires care. From its definition, $C(x,x')$ does not need to be symmetric but must satisfy $C(x,x')\T = C(x',x)$. Also, since $\mbox{Var}\left(\sum_{k=1}^n a_{k}\T W(x_k)\right) > 0$ for any set of input vectors $\{x_1,\ldots,x_n\}$ and $q\times 1$ vectors $a_1,\ldots,a_n$, not all zero, we obtain $\sum_{i,j=1}^n a_i\T C(x_i,x_j)a_j > 0$, which implies that the $nq\times nq$ matrix $[C(x_i,x_j)]$ is positive definite. Characterizations of cross-covariance matrices in Euclidean domains are well-known and have also been investigated on spheres by \cite{porcu2016spatio}. Envisioning the joint process $\left(Z(x),D_VZ(x)\right)$, our assumption $Z\sim GP(0,K)$ has some immediate consequences. The following lemmas show that $D_VZ$ is also a GP and its covariance function is determined by $K$ in analytic form. The inner product is in the Riemannian sense, i.e., $\langle\cdot,\cdot\rangle=g(\cdot,\cdot)$.
\begin{lemma} \label{lem:D_VZ_cov}
The derivative process $D_VZ$ is a valid GP on $\MM$ with mean function $\langle\nabla \mu(x),V_x\rangle$ and covariance function 
 \begin{equation}
     K_V(x,x') = \cov(D_VZ(x),D_VZ(x')) = (\nabla_{12} K(x,x'))(V_x,V_{x'}).
 \end{equation}
 where $x'\in \MM$ is another point, $\nabla_{12}$ represents the partial gradient of $K$, which is a function on the product manifold $\MM\times\MM$ with respect to the first and second coordinates.
\end{lemma}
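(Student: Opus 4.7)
The plan is to establish the three assertions---Gaussianity of $D_VZ$, the stated form of its mean, and the stated form of its covariance---by passing to $L^2$-limits in the difference quotient that defines $D_VZ$. The 1-MSD assumption (Definition~\ref{def:1-MSD}) is precisely what lets these limits be computed one coordinate at a time.

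For Gaussianity, fix any finite collection $\{x_1,\ldots,x_n\}\subset \MM$. For every $t>0$ the random vector with components $(Z(\exp_{x_i}(tV_{x_i}))-Z(x_i))/t$ is Gaussian, being an affine image of the parent GP $Z$. By 1-MSD, each coordinate converges in $L^2$ to $D_VZ(x_i)$ as $t\to 0$, so the joint vector is an $L^2$-limit of Gaussian vectors. Since Gaussianity is preserved under $L^2$-limits (means and covariances converge to those of the limit), $(D_VZ(x_1),\ldots,D_VZ(x_n))$ is jointly Gaussian, proving that $D_VZ$ is a GP.

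For the mean, $L^2$-convergence implies $L^1$-convergence, so one may interchange limit and expectation:
\[
\EE[D_VZ(x)] \;=\; \lim_{t\to 0}\frac{\mu(\exp_x(tV_x))-\mu(x)}{t}.
\]
Setting $\gamma(t)=\exp_x(tV_x)$ so that $\gamma(0)=x$ and $\gamma'(0)=V_x$, the chain rule gives $(d/dt)\mu(\gamma(t))|_{t=0}=\langle\nabla\mu(x),V_x\rangle$, which is the claimed mean function. For the covariance, I use the standard fact that if $X_t\to X$ and $Y_s\to Y$ in $L^2$, then $\cov(X_t,Y_s)\to\cov(X,Y)$ (an immediate consequence of Cauchy--Schwarz). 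Applied to the two difference quotients defining $D_VZ(x)$ and $D_VZ(x')$, this yields
\[
\cov(D_VZ(x),D_VZ(x')) \;=\; \lim_{t,s\to 0}\frac{\Delta_{t,s}K(x,x')}{ts},
\]
where $\Delta_{t,s}K(x,x') = K(\exp_x(tV_x),\exp_{x'}(sV_{x'})) - K(x,\exp_{x'}(sV_{x'})) - K(\exp_x(tV_x),x') + K(x,x')$. Identifying this double-difference limit with the bilinear pairing $(\nabla_{12}K(x,x'))(V_x,V_{x'})$ is the crux of the argument: working in Riemannian normal coordinates centered at $x$ (for the first slot) and at $x'$ (for the second), the exponential maps become identity maps on tangent vectors to leading order, reducing the computation to a standard Euclidean second-order Taylor expansion on $\RR^p\times\RR^p$. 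The resulting bilinear form is manifestly independent of the chosen normal charts because $\nabla_{12}K$ is a well-defined section of $T^*\MM\boxtimes T^*\MM$ over $\MM\times\MM$.

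The main obstacle is the third step: justifying that the iterated limit exists, is symmetric in $(t,s)$, and realizes the covariant bilinear pairing at two distinct base points $x,x'\in\MM$. This requires enough regularity of $K$ to permit differentiation along the two geodesic curves in the two arguments. The spectral representations in \Cref{eqn:Matern} and \Cref{eqn:RBF} make this concrete: one differentiates term-by-term under the sum and invokes uniform convergence of the differentiated series. The regime in which this is legitimate is precisely $\nu>(p+1)/2$ (or the RBF case), which is the hypothesis furnished by \Cref{thm:1MSD}. Positive definiteness of $K_V$ is automatic, as it arises as the covariance function of the well-defined process $D_VZ$.
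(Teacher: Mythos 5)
Your proposal is correct and follows essentially the same route as the paper's proof: pass the limit defining $D_VZ$ through the expectation to get $\langle\nabla\mu(x),V_x\rangle$, and through the covariance to express $K_V$ as the limit of the double difference quotient of $K$, which is then identified with $(\nabla_{12}K(x,x'))(V_x,V_{x'})$. Your treatment is in fact somewhat more careful than the paper's, which asserts the validity of the GP and interchanges limits with expectations and covariances without the $L^2$-convergence justifications you supply.
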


\begin{lemma}\label{lem:ZD_VZ}
The covariance between the process $Z(x)$ in $x\in \MM$ and the derivative $D_VZ(x')$ in $x'\in\MM$ is given by
\begin{equation}\label{eqn:ZD_VZ}
\cov(Z(x),D_VZ(x'))= \nabla_2 K(x,x')(V_{x'}),
\end{equation}
where $\nabla_2$ denotes the partial gradient of $K$ with respect to the second coordinate.
\end{lemma}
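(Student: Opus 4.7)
The plan is to unwind the definition of $D_VZ(x')$ and push the limit through the covariance. More precisely, since $Z$ is 1-MSD by hypothesis, the difference quotient $\frac{1}{t}\bigl(Z(\exp_{x'}(tV_{x'}))-Z(x')\bigr)$ converges in $L^2(\Omega)$ to $D_VZ(x')$ as $t\to 0$; this is just a restatement of Definition~\ref{def:1-MSD} combined with the observation that $L^2$ convergence of a Cauchy family is equivalent to existence of the second-moment limit. The covariance $\cov(Z(x),\cdot)$ is a continuous linear functional on $L^2(\Omega)$ (as an inner product against the fixed, mean-zero, square-integrable random variable $Z(x)$), so it commutes with the $L^2$ limit.

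Carrying this out, I would write
\begin{align*}
\cov(Z(x),D_VZ(x'))
&= \cov\!\left(Z(x),\ \lim_{t\to 0}\frac{Z(\exp_{x'}(tV_{x'}))-Z(x')}{t}\right)\\
&= \lim_{t\to 0}\frac{\cov(Z(x),Z(\exp_{x'}(tV_{x'})))-\cov(Z(x),Z(x'))}{t}\\
&= \lim_{t\to 0}\frac{K(x,\exp_{x'}(tV_{x'}))-K(x,x')}{t}.
\end{align*}
The last quantity is, by the very definition of a directional derivative along the geodesic $t\mapsto \exp_{x'}(tV_{x'})$ with initial velocity $V_{x'}\in T_{x'}\MM$, the value of $\nabla_2 K(x,\cdot)$ at $x'$ applied to $V_{x'}$. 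In the Riemannian notation of the paper this is $\nabla_2 K(x,x')(V_{x'})=\langle \nabla_2 K(x,x'),V_{x'}\rangle$, yielding \Cref{eqn:ZD_VZ}.

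The only non-cosmetic step is the justification of the interchange in the second equality, and this is where I would spend the most care. It suffices to note that for any fixed zero-mean $U\in L^2(\Omega)$ and any family $W_t$ with $W_t\to W$ in $L^2(\Omega)$, one has $\EE[U W_t]\to \EE[U W]$ by Cauchy--Schwarz, since $|\EE[U(W_t-W)]|\le \|U\|_{L^2}\|W_t-W\|_{L^2}\to 0$. Applying this with $U=Z(x)$ and $W_t=t^{-1}(Z(\exp_{x'}(tV_{x'}))-Z(x'))$ gives the desired exchange. The required $L^2$ convergence of $W_t$ follows from Lemma~\ref{lem:Taylor_1} together with the definition of 1-MSD, and existence of the limiting variance is exactly what Lemma~\ref{lem:D_VZ_cov} (or its proof) supplies. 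With the interchange secured, the identification of the limit with $\nabla_2 K(x,x')(V_{x'})$ is immediate from the intrinsic definition of the Riemannian gradient of the smooth function $y\mapsto K(x,y)$ at $y=x'$, noting that smoothness of $K$ in its second argument (needed for the one-sided derivative along $\exp_{x'}(tV_{x'})$ to exist and equal $\langle \nabla_2 K(x,x'),V_{x'}\rangle$) is inherited from the kernel construction in \Cref{eqn:Matern}--\Cref{eqn:RBF} under the standing hypothesis $\nu>(p+1)/2$ of Theorem~\ref{thm:1MSD}.
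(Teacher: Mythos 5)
Your proposal is correct and follows essentially the same route as the paper's proof: unwind the definition of $D_VZ(x')$, pass the limit through the covariance, and identify the resulting difference quotient of $K(x,\cdot)$ along the geodesic with $\nabla_2 K(x,x')(V_{x'})$. The only difference is that you explicitly justify the limit–covariance interchange via $L^2$ convergence and Cauchy--Schwarz, a step the paper performs without comment.
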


The above lemmas combine to yield
\begin{equation}\label{eq:gp-grad}
    \begin{pmatrix}Z(x)\\D_VZ(x')\end{pmatrix}\sim GP\left(\left[\begin{array}{c}
        \mu(x)  \\
        \<\nabla \mu(x'),V_{x'}\>
    \end{array}\right], \left[\begin{array}{cc}
        K(x,x') & \nabla_2K(x,x')(V_{x'}) \\
        \nabla_2K(x',x)(V_{x'}) & (\nabla_{12} K(x',x'))(V_{x'},V_{x'})
    \end{array}\right]\right).
\end{equation} 
Let $\mu_{D_V}=\<\nabla \mu(x'),V_{x'}\>+\nabla_2K(x',x)(V_{x'})K(x,x')^{-1} (Z(x)-\mu(x))$ and let $\Sigma_{D_V} = K_V(x',x')-\nabla_2K(x',x)(V_{x'})K(x,x')^{-1}\nabla_2K(x,x')(V_{x'})$. Then $D_VZ(x')\mid Z(x)\sim N\left(\mu_{D_V},\Sigma_{D_V}\right)$ is a valid probability density. Using \Cref{lem:D_VZ_cov}, $K_V(x',x')=(\nabla_{12} K(x',x'))(V_{x'},V_{x'})$. In particular, when $K$ is Mat\'ern or an RBF kernel, the covariance function $K_V$ and the covariance between $Z$ and $D_VZ$ admit simpler forms:
\begin{corollary}\label{cly:ZD_VZMat}
The cross-covariance for the Mat\'ern kernel is given by
\begin{align*}
    K_V(x,x')&=\frac{\sigma^2}{C_{\nu,\alpha}}\sum_{l=0}^\infty(\alpha^2+\lambda_l)^{-\nu-\frac{p}{2}}\nabla f_l(V_x)\nabla f_l(V_{x'})\; \mbox{ and } \\ 
    \cov(Z(x),D_VZ(x')) &= \frac{\sigma^2}{C_{\nu,\alpha}}\sum_{l=0}^\infty (\alpha^2+\lambda_l)^{-\nu-\frac{p}{2}}f_l(x)\nabla f_l(V_{x'})\;.
\end{align*}
The cross-covariance for the RBF kernel is given by
\begin{align*}
K_V(x,x')&=\frac{\sigma^2}{C_{\nu,\alpha}}\sum_{l=0}^\infty e^{-\frac{\lambda_l}{2\alpha^2}}\nabla f_l(V_x)\nabla f_l(V_{x'})\;\mbox{ and }\\ 
\cov(Z(x),D_VZ(x')) &= \frac{\sigma^2}{C_{\nu,\alpha}}\sum_{l=0}^\infty e^{-\frac{\lambda_l}{2\alpha^2}}f_l(x)\nabla f_l(V_{x'}).
\end{align*}
\end{corollary}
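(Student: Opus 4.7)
The plan is to obtain each of the four identities by substituting the spectral expansions \Cref{eqn:Matern} and \Cref{eqn:RBF} into the formulas provided by \Cref{lem:D_VZ_cov} and \Cref{lem:ZD_VZ}, and then differentiating the resulting series term by term. Since $\nabla_2[f_l(x)f_l(x')] = f_l(x)\nabla f_l(x')$ contracts with $V_{x'}$ to give the scalar $f_l(x)\nabla f_l(V_{x'})$, and $\nabla_{12}[f_l(x)f_l(x')] = \nabla f_l(x)\otimes \nabla f_l(x')$ contracts with $(V_x, V_{x'})$ to give $\nabla f_l(V_x)\nabla f_l(V_{x'})$, the four displayed formulas follow at once from \Cref{lem:D_VZ_cov}--\Cref{lem:ZD_VZ}, provided that $\sum_l$ can be interchanged with the appropriate gradient operator.

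The substantive content is therefore the justification of termwise differentiation. On a compact Riemannian manifold I would combine Weyl's law $\lambda_l \asymp l^{2/p}$ with H\"ormander's sup-norm estimates $\|f_l\|_\infty = O(\lambda_l^{(p-1)/4})$ and their gradient analogue $\|\nabla f_l\|_\infty = O(\lambda_l^{(p+1)/4})$. For the Mat\'ern kernel, the $l$-th term of the formally differentiated mixed-partial series is uniformly bounded on $\MM\times\MM$ by a constant multiple of $(\alpha^2+\lambda_l)^{-\nu-p/2}\lambda_l^{(p+1)/2}\|V\|_\infty^2 = O(\lambda_l^{1/2-\nu})$, and by Weyl the tail $\sum_l \lambda_l^{1/2-\nu}\asymp \sum_l l^{(1-2\nu)/p}$ converges precisely when $\nu > (p+1)/2$, which is exactly the hypothesis under which \Cref{thm:1MSD} guarantees $D_V Z$ exists. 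The single-derivative series for $\nabla_2 K$ converges under a strictly weaker threshold, hence also under this hypothesis. The Weierstrass M-test then permits the exchange of $\nabla_{12}$ or $\nabla_2$ with $\sum_l$, yielding the two Mat\'ern identities.

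For the RBF kernel, the coefficients $e^{-\lambda_l/(2\alpha^2)}$ decay super-polynomially in $\lambda_l$, so the same sup-norm bounds make the M-test apply unconditionally, matching the fact that RBF realizations are $k$-MSD for every $k$. The termwise computation then produces the two RBF identities.

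The main obstacle is the uniform pointwise control of $\nabla f_l$ on a generic compact Riemannian manifold: unlike the sphere, where addition formulas for spherical harmonics yield sharp explicit bounds, the general case requires H\"ormander's classical spectral theorem together with an elliptic-regularity (or Sogge-type) upgrade from the scalar estimate on $\|f_l\|_\infty$ to the gradient estimate. Once these spectral bounds are in hand (they are almost certainly already deployed in the proof of \Cref{thm:1MSD}), the remainder of the argument is routine bookkeeping on the termwise derivatives.
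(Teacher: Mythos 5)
Your proposal is correct and follows essentially the same route as the paper: substitute the eigen-expansion of $K$ into the covariance formulas of \Cref{lem:D_VZ_cov} and \Cref{lem:ZD_VZ} and pass the differentiation through the sum, with the interchange justified by the gradient sup-norm bounds $\|\nabla f_l\|_\infty \lesssim \lambda_l^{(p+1)/4}$ and Weyl's law---precisely the machinery the paper already deploys in the proof of \Cref{thm:1MSD}. The only cosmetic difference is that the paper's proof of the corollary writes out the iterated difference quotients explicitly and exchanges the limits with the sum term by term, whereas you invoke the Weierstrass M-test for termwise differentiation directly; the estimates and the convergence threshold $\nu>(p+1)/2$ are identical.
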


For isotropic covariance functions, we have the following equivalent definition. Evidently, $Z$ being isotropic does not necessarily imply that $D_VZ$ is also isotropic.
\begin{proposition}\label{prop:MSD_iso}
If $K$ is isotropic, then $Z$ is 1-MSD if and only if $\lim_{t\to0} \frac{K(t)-K(0)}{t^2}<\infty$, that is, $K(t)=K(0)+O(t^2)$ for $t\approx 0$, which is again equivalent to $K'(0)=0$. 
\end{proposition}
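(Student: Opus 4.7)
The plan is to reduce the 1-MSD condition to a one-variable analytic condition on $K$ by combining isotropy with the local geodesic distance identity on $\MM$.

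First, I would fix $x\in\MM$ and a nontrivial geodesic $\gamma$ with $\gamma(0)=x$ and initial velocity $v\in T_x\MM$, setting $c=\sqrt{g_x(v,v)}>0$ (the $v=0$ case is trivial). Since geodesics are locally distance-minimizing, for $|t|$ smaller than the injectivity radius at $x$ divided by $c$ we have $d_\MM(\gamma(t),x)=c|t|$. Invoking isotropy then gives $K(\gamma(t),\gamma(t))=K(x,x)=K(0)$ and $K(x,\gamma(t))=K(c|t|)$, so
\[
\EE\bigl(Z(\gamma(t))-Z(x)\bigr)^2 \;=\; 2K(0) - 2K(c|t|),
\]
and dividing by $t^2$ yields
\[
\EE\!\left(\frac{Z(\gamma(t))-Z(x)}{t}\right)^{2} \;=\; -2c^2\cdot\frac{K(c|t|)-K(0)}{(c|t|)^2}.
\]
Sending $t\to 0$ and substituting $s=c|t|$, the left-hand side has a finite limit for every choice of $x$ and $v$ if and only if $\lim_{s\to 0^+}(K(s)-K(0))/s^2$ exists and is finite, since the right-hand side depends on $(x,v)$ only through $c>0$ and the rescaled argument $s$.

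For the remaining equivalences, finiteness of the limit in (2) bounds the ratio $(K(t)-K(0))/t^2$ near $0$, which is exactly $K(t)=K(0)+O(t^2)$; this $O(t^2)$ bound in turn forces $(K(t)-K(0))/t=O(t)\to 0$, so the one-sided derivative $K'(0^+)$ exists and equals $0$. Conversely, under the customary $C^2$ regularity of valid isotropic covariograms, Taylor expansion $K(t)=K(0)+K'(0)t+O(t^2)$ shows that $K'(0)=0$ recovers the $O(t^2)$ bound and hence the finiteness of the limit, closing the loop.

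The one non-algebraic step is the local identity $d_\MM(\gamma(t),x)=c|t|$, and this is the main technical point I would flag; it is a standard consequence of the Gauss lemma within the injectivity radius and is exactly where compactness (or at least positivity of the injectivity radius) is used. As a sanity check, the displayed formula also shows the limit in (2), when it exists, is automatically nonpositive, consistent with its role as a negated variance.
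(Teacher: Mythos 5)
Your proposal is correct and follows essentially the same route as the paper: both reduce the 1-MSD condition via isotropy and the local identity $d_\MM(\exp_x(tv),x)=\|v\|\,|t|$ to the one-variable statement $\EE(Z(\gamma(t))-Z(x))^2=2(K(0)-K(t))$ and then divide by $t^2$. Your treatment is in fact slightly more careful than the paper's, in handling the rescaling by $c=\|v\|$ explicitly and in flagging that the final equivalence with $K'(0)=0$ requires a regularity assumption on $K$ for the converse direction.
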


\subsection{The curvature process}\label{sec:curv}
Turning to second order differentiability, we formalize the curvature process and keep the nature of developments consistent with the previous subsection. We extend $\left(Z(x), D_V Z(x)\right)\T$ to include the curvature process and elaborate on the consequences of $Z(x)\sim GP(0,K)$. The developments 
resemble \Cref{eq:gp-grad}. 
\begin{definition}\label{def:2-MSD}
$Z$ is said to be twice mean-square differentiable (2-MSD) at $x\in\MM$ if for any geodesic $\gamma$ with $\gamma(0)=x$, $\lim\limits_{t \to 0}\EE\left(\frac{D_VZ(\gamma(t))-D_VZ(x)}{t}\right)^2$ exists for any $V\in\mathcal{\MM}$. $Z$ is said to be 2-MSD if it is 2-MSD at any $x\in\MM$.   
\end{definition}
While we leverage the derivative process, $D_VZ$, to define a 2-MSD process, we could also define it using the parent process $Z$. Proposition~\ref{prop:2-MSD} at the end of this subsection discusses the consequences of adopting this route. Turning to our kernels, the next result is an extension of \autoref{thm:1MSD} showing the relationship between the smoothness parameter and the dimension of the manifold when $Z$ is 2-MSD.
\begin{theorem}\label{thm:2MSD}
Mat\'ern GP is 2-MSD if $\nu>\frac{p+3}{2}$; RBF is 2-MSD. 
\end{theorem}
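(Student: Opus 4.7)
The plan is to follow the same blueprint as the proof of \Cref{thm:1MSD}, but applied one derivative deeper. By \Cref{def:2-MSD}, establishing 2-MSD amounts to showing that for any geodesic $\gamma$ with $\gamma(0)=x$ and any smooth vector field $V$, the limit of $t^{-2}\,\EE(D_VZ(\gamma(t)) - D_VZ(x))^2$ exists. Expanding via bilinearity and the covariance $K_V$ from \Cref{lem:D_VZ_cov},
\[
\EE\bigl(D_VZ(\gamma(t)) - D_VZ(x)\bigr)^2 = K_V(\gamma(t),\gamma(t)) - 2K_V(\gamma(t),x) + K_V(x,x).
\]
Setting $\psi(s,t) := K_V(\gamma(s),\gamma(t))$, a Taylor expansion around $(0,0)$ combined with the symmetry $\psi(s,t) = \psi(t,s)$ (which forces $\psi_s = \psi_t$ and $\psi_{ss}=\psi_{tt}$ at the origin) yields $\psi(t,t) - 2\psi(t,0) + \psi(0,0) = t^2\psi_{st}(0,0) + o(t^2)$. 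Thus 2-MSD reduces to verifying that the mixed partial $\psi_{st}(0,0)$ exists and is finite.

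To evaluate $\psi_{st}(0,0)$, I would substitute the spectral expansion of $K_V$ from \Cref{cly:ZD_VZMat} and, pending justification of termwise differentiation, obtain
\[
\psi_{st}(0,0) = \frac{\sigma^2}{C_{\nu,\alpha}}\sum_{l=0}^\infty (\alpha^2 + \lambda_l)^{-\nu - p/2} \left(\frac{d}{dt}\nabla f_l(V_{\gamma(t)})\bigg|_{t=0}\right)^2,
\]
where, via the chain rule, each inner quantity decomposes into a Hessian term $\nabla^2 f_l(\dot\gamma(0), V_x)$ and a term $\<\nabla f_l, \nabla_{\dot\gamma(0)}V\>$. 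Convergence of this series is then controlled by two classical ingredients: the Hörmander $L^\infty$-bound $\|\nabla^k f_l\|_\infty \le C\lambda_l^{(p-1)/4 + k/2}$ for Laplace--Beltrami eigenfunctions on compact $\MM$, applied with $k=1,2$; and Weyl's asymptotic $\lambda_l \asymp l^{2/p}$. After squaring, the general term is bounded (up to constants depending on $\|V\|_\infty$ and $\|\nabla V\|_\infty$, both finite by compactness) by $\lambda_l^{(p+3)/2 - \nu - p/2} = \lambda_l^{3/2 - \nu}$, so the series is majorized by $\sum_l l^{(3-2\nu)/p}$. This converges precisely when $\nu > (p+3)/2$, giving the Mat\'ern conclusion. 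For the RBF kernel, the exponential factor $e^{-\lambda_l/(2\alpha^2)}$ dominates any polynomial growth in $l$, so 2-MSD holds unconditionally.

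The main obstacle will be rigorously justifying the interchange of the $t\to 0$ limit with the infinite spectral sum---equivalently, termwise differentiability of $K_V$. I would handle this via dominated convergence: using the fundamental theorem of calculus, $t^{-1}(\nabla f_l(V_{\gamma(t)}) - \nabla f_l(V_x))$ equals an integral mean of $\frac{d}{ds}\nabla f_l(V_{\gamma(s)})$ for $s\in[0,t]$, which is uniformly bounded in small $t$ by $C(\|\nabla^2 f_l\|_\infty\|V\|_\infty + \|\nabla f_l\|_\infty\|\nabla V\|_\infty)$. Squaring and summing, the dominating series is controlled by the same Hörmander--Weyl bookkeeping, so the exchange is justified. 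Compactness of $\MM$ is essential throughout: it supplies the uniform-in-$x$ Hörmander bounds and guarantees that $V$ and its covariant derivative have bounded sup-norm.
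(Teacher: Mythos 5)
Your proposal is correct and follows essentially the same route as the paper's proof: expand $\EE(D_VZ(\gamma(t))-D_VZ(x))^2$ via the spectral form of $K_V$, bound the eigenfunction Hessians by $\|\nabla^2 f_l\|_\infty \lesssim \lambda_l^{(p+3)/4}$, invoke Weyl's law $\lambda_l\asymp l^{2/p}$ to get the threshold $\nu>\frac{p+3}{2}$, and justify the limit--sum interchange by a uniform (Weierstrass/dominated-convergence) bound. Your explicit tracking of the extra term $\<\nabla f_l,\nabla_{\dot\gamma(0)}V\>$, which is of lower order $\lambda_l^{(p+1)/4}$ and hence harmless, is slightly more careful than the paper's but does not change the argument.
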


The curvature process captures the rates of change in the derivative process over $\MM$. In an Euclidean setting they manifest as Hessians \citep[see, e.g.][]{halder2024bayesian}. Directional curvature is of interest when monitoring such a change along a direction. While in the Euclidean setting such a course is offered through familiar bi-linear forms involving the Hessian and direction vectors, an analogous formulation for $\MM$ is nuanced. The following results develop the required machinery. We first define the directional curvature process.

\begin{definition}
Let $Z$ be 2-MSD and $U,V\in\mathfrak{X(\MM)}$ be two fixed smooth vector fields on $\MM$, then the curvature process of $Z$ with respect to $U$ and $V$ denoted by $D^2_{U,V}Z$ is the directional curvature process,
$$D^2_{U,V}Z(x)\coloneqq\lim_{t\to 0} \frac{D_VZ(\exp_x(tU(x)))-D_VZ(x)}{t}.$$
\end{definition}
Note that $D_{U,V}^2Z$ is well-defined with regard to Definition \ref{def:2-MSD}; now the geodesic is $\gamma(t) = \exp_x(tU(x))$. 
The following lemmas derive the covariance function for the curvature process, $D^2_{U,V}Z$. These results eventually enable inference for the joint process, $(Z(x),W(x)\T)\T$, where $W(x)=(D_VZ(x), D^2_{U,V}Z(x))\T$. We begin with covariance $K_{U,V}(x,x')=\cov(D^2_{U,V}Z(x), D^2_{U,V}Z(x'))$ in the next lemma.
\begin{lemma} \label{lem:D_UVZ_cov}
 If $Z$ is 2-MSD, then $D^2_{U,V}Z$ is a valid GP in $\MM$ with mean function $\nabla^2\mu(x)(V_x,U_x)$ and the covariance function
 \begin{equation}\label{eqn:D_VZD_VZ}
     K_{U,V}(x,x') = (\nabla_{12} K_V(x,x'))(U_x,U_{x'})=\nabla_{1212}K(x,x')(V_x,V_{x'},U_{x},U_{x'}),
 \end{equation}
  where $\nabla_{1212}$ represents the partial gradient of $K$, a function on the product manifold $\MM\times\MM$, with respect to the first and second coordinates twice. The term $(V_x,V_{x'},U_{x},U_{x'})\in \mathcal{T}^2_0(\MM\times\MM)$ results from a product of two 2-0 tensors on $\MM$. 
\end{lemma}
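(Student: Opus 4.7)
The plan is to iterate the proof of Lemma \ref{lem:D_VZ_cov}, now treating $D_VZ$ as the ``parent'' GP and $D^2_{U,V}Z$ as its derivative in direction $U$. Since $Z$ is 2-MSD, Definition \ref{def:2-MSD} gives $L^2$-existence of
\[D^2_{U,V}Z(x)=\lim_{t\to 0}\frac{D_VZ(\exp_x(tU_x))-D_VZ(x)}{t}\]
at every $x\in\MM$. Because $D_VZ$ is already a GP by Lemma \ref{lem:D_VZ_cov}, each difference quotient is a linear combination of jointly Gaussian random variables; since $L^2$-limits preserve joint Gaussianity across any finite collection of points, $D^2_{U,V}Z$ is a GP on $\MM$.

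For the mean, $L^2$-convergence implies $L^1$-convergence, so one can interchange $\EE$ with the limit. Substituting $\EE D_VZ(y)=\langle\nabla\mu(y),V_y\rangle$ from Lemma \ref{lem:D_VZ_cov} and taking the directional derivative along $U$ at $x$ produces the bilinear form of the Riemannian Hessian, $\nabla^2\mu(x)(V_x,U_x)$.

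For the covariance, I use the standard fact that if $X_n\to X$ and $Y_n\to Y$ in $L^2$ then $\cov(X_n,Y_n)\to \cov(X,Y)$, which is a consequence of Cauchy--Schwarz. Writing $D^2_{U,V}Z(x)$ and $D^2_{U,V}Z(x')$ as the corresponding $L^2$-limits with parameters $t$ and $s$, the finite-$(t,s)$ covariance between the two difference quotients is a double difference of $K_V$. Letting $s\to 0$ first yields the directional derivative of $K_V(\cdot,x')$ in its second argument along $U_{x'}$, evaluated at $\exp_x(tU_x)$; the outer $t\to 0$ limit then takes the derivative in the first argument along $U_x$, producing $(\nabla_{12}K_V(x,x'))(U_x,U_{x'})$. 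Substituting $K_V(y,y')=\nabla_{12}K(y,y')(V_y,V_{y'})$ from Lemma \ref{lem:D_VZ_cov} and viewing the outcome as the iterated bilinear derivative of $K$ on the product manifold yields $\nabla_{1212}K(x,x')(V_x,V_{x'},U_x,U_{x'})$.

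The main obstacle lies in controlling the two successive interchanges of limits: one must verify that the iterated $L^2$-limits exist (guaranteed by the 2-MSD hypothesis) and that differentiation commutes with the spectral expansion of $K$ so that $\nabla_{1212}K$ is a genuine smooth section of $\mathcal{T}^2_0(\MM\times\MM)$. For the Mat\'ern and RBF kernels this reduces to term-wise bounds on $(\alpha^2+\lambda_l)^{-\nu-p/2}\|\nabla^2 f_l\|_\infty^2$ and $e^{-\lambda_l/2\alpha^2}\|\nabla^2 f_l\|_\infty^2$, which are the same tail estimates used in Theorem \ref{thm:2MSD}; hence no analytic machinery beyond that theorem is required.
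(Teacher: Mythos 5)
Your proposal is correct and follows essentially the same route as the paper's proof: iterate the argument of Lemma~\ref{lem:D_VZ_cov} with $D_VZ$ as the parent process, interchange expectation/covariance with the $L^2$-limits, and read off the double difference quotient of $K_V$ as $(\nabla_{12}K_V(x,x'))(U_x,U_{x'})$ before substituting $K_V=\nabla_{12}K(\cdot,\cdot)(V,V)$. You in fact supply somewhat more justification (Gaussianity of $L^2$-limits, Cauchy--Schwarz for covariance convergence, and the term-wise bounds on $\|\nabla^2 f_l\|_\infty$) than the paper's proof makes explicit.
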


\begin{lemma}\label{lem:Z_D_UVZ;D_VZD_UVZ}
The joint distributions are given by
\begin{align}
\cov(Z(x),D^2_{U,V}Z(x')) &= \nabla_{22} K(x,x')(V_{x'},U_{x'}),\label{eqn:ZD_UVZ}\\
\cov(D_VZ(x),D^2_{U,V}Z(x'))&= \nabla_{122} K(x,x')(V_{x},V_{x'},U_{x'})\label{eqn:D_VZD_UVZ}.
\end{align}
Note the asymmetries in the cross-covariances: $\cov(Z(x),D^2_{U,V}Z(x')) \ne \cov(D^2_{U,V}Z(x'),Z(x))$ and $\cov(D_VZ(x),D^2_{U,V}Z(x'))\ne \cov(D^2_{U,V}Z(x'),D_VZ(x))$.
\end{lemma}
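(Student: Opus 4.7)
The plan is to promote the strategy used in \Cref{lem:D_VZ_cov} and \Cref{lem:ZD_VZ} by one order: write $D^2_{U,V}Z(x')$ as a difference quotient of $D_VZ$ in the $U_{x'}$-direction, push that limit through $\cov(\cdot,\cdot)$ (the step where the 2-MSD hypothesis is essential), and then read off the resulting limit as a higher-order covariant derivative of $K$ by invoking the cross-covariance formulas that appear in the preceding lemmas.

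For \Cref{eqn:ZD_UVZ}, interchanging the limit with $\cov$ gives
\begin{align*}
\cov\bigl(Z(x), D^2_{U,V}Z(x')\bigr) &= \lim_{t\to 0}\tfrac{1}{t}\bigl[\cov(Z(x), D_VZ(\exp_{x'}(tU_{x'}))) - \cov(Z(x), D_VZ(x'))\bigr]\\
&= \lim_{t\to 0}\tfrac{1}{t}\bigl[\nabla_2 K(x,\exp_{x'}(tU_{x'}))(V_{\exp_{x'}(tU_{x'})}) - \nabla_2 K(x,x')(V_{x'})\bigr],
\end{align*}
where the substitution inside the bracket uses \Cref{lem:ZD_VZ}. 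By definition the limit is the directional derivative in $U_{x'}$, at $y=x'$, of the map $y\mapsto \nabla_2 K(x,y)(V_y)$, which, packaged as the evaluation of a $(0,2)$-tensor on $(V_{x'}, U_{x'})$, is exactly $\nabla_{22}K(x,x')(V_{x'}, U_{x'})$---consistent with the tensor convention the paper has already adopted in \Cref{lem:D_UVZ_cov}, so that any connection terms arising from differentiating $V$ along $U_{x'}$ are absorbed into the notation. Identity \Cref{eqn:D_VZD_UVZ} follows from the same template, now starting from $\cov(D_VZ(x),D_VZ(y)) = (\nabla_{12}K)(x,y)(V_x,V_y)$ in \Cref{lem:D_VZ_cov} and differentiating in $y$ along $U_{x'}$, which produces $\nabla_{122}K(x,x')(V_x,V_{x'},U_{x'})$.

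The asymmetry observation is then immediate: applying the same procedure to $\cov(D^2_{U,V}Z(x'), Z(x))$ places the derivative on the \emph{first} argument of $K$, producing $\nabla_{11}K(x',x)(V_{x'}, U_{x'})$; the ordinary symmetry $K(x,x') = K(x',x)$ as a scalar function does not descend to an equality of $\nabla_{11}$ with $\nabla_{22}$ because these are iterated covariant derivatives in two distinct tangent spaces, and the analogous comment handles the other pair. The main technical hurdle---and the step I expect to be the real work---is justifying the interchange of the difference-quotient limit with the covariance. I would do this via the bilinearity of $\cov$ together with the polarization identity $\cov(A,B) = \tfrac{1}{4}[\mathrm{Var}(A+B) - \mathrm{Var}(A-B)]$, which reduces the question to $L^2$-convergence of the difference quotients of $D_VZ$---exactly the content of \Cref{def:2-MSD}, verified for the Mat\'ern kernel with $\nu > (p+3)/2$ and for RBF in \Cref{thm:2MSD}.
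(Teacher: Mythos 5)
Your proposal is correct and follows essentially the same route as the paper: express $D^2_{U,V}Z(x')$ as the difference quotient of $D_VZ$ along $U_{x'}$, pass the limit through the covariance, substitute the formulas from \Cref{lem:ZD_VZ} and \Cref{lem:D_VZ_cov}, and identify the resulting limit as $\nabla_{22}K$ and $\nabla_{122}K$ respectively. Your added justification of the limit interchange via polarization and the $L^2$-convergence guaranteed by 2-MSD is a detail the paper leaves implicit, but it does not change the argument.
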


Extending the discussion following \Cref{lem:ZD_VZ}, the above lemmas now apply to the joint differential process, $W(x)\coloneqq(D_VZ(x),D^2_{U,V}Z(x))\T$ to provide
\begin{equation}\label{eq:gp-grad-curv}
\begin{split}    
&\qquad\qquad \begin{pmatrix}Z(x)\\W(x)\end{pmatrix}\sim GP\left(\mu_W, \Sigma_W\right),\qquad
    \mu_W = \left[\begin{array}{c}
        \mu(x)  \\
        \<\nabla \mu(x),V_{x}\>\\
        \nabla^2\mu(x)(V_{x},U_{x})
    \end{array}\right],\\
    &\Sigma_W = \left[\begin{array}{ccc}
        K(x,x') & \nabla_2K(x,x')(V_{x'}) & \nabla_{22} K(x,x')(V_{x'},U_{x'})\\
        \nabla_2K(x,x')(V_{x'}) & K_V(x,x') & \nabla_{122} K(x,x')(V_{x},V_{x'},U_{x'})\\
        \nabla_{22} K(x,x')(V_{x'},U_{x'})& \nabla_{122} K(x,x')(V_{x},V_{x'},U_{x'}) & K_{U,V}(x,x')
    \end{array}\right].
\end{split}
\end{equation}
Similarly, $P(W(x)\mid Z(x))$, is obtained following the discussion below \Cref{eq:gp-grad}. Thus, the process $(Z(x),W(x)\T)\T$ comprising the parent and differential processes has the matrix-valued cross-covariance function 
\begin{equation}\label{eqn:C_W}
   C_W(x,x') = \begin{bmatrix}
      C_{Z, Z}(x,x') & C_{Z, D_VZ}(x,x') & C_{Z, D^2_{U,V}Z}(x,x') \\
      C_{D_VZ, Z}(x,x') & C_{D_VZ, D_VZ}(x,x') & C_{D_VZ, D^2_{U,V}Z}(x,x') \\
      C_{D^2_{U,V}Z, Z}(x,x') & C_{D^2_{U,V}Z, D_VZ}(x,x') & C_{D^2_{U,V}Z, D^2_{U,V}Z}(x,x') 
    \end{bmatrix}\;,
\end{equation}
where $C_{F_1,F_2}(x,x') = \mbox{cov}(F_1(x), F_2(x'))$ for random variables $F_1(x)$ and $F_2(x')$. If the parent process is a GP, then the joint process above is also a valid GP. The above results, when applied to our choices for kernels, drive the following expressions. 
\begin{corollary}\label{cly:D_UVZMat}
When the covariance function of $Z$ is Mat\'ern we have
\begin{align}
    K_{U,V}(x,x')&=\frac{\sigma^2}{C_{\nu,\alpha}}\sum_{l=0}^\infty (\alpha^2+\lambda_l)^{-\nu-\frac{p}{2}}\nabla f_l^2(V_x,U_x)\nabla^2 f_l(V_{x'},U_{x'}),\\
    \cov(Z(x),D_{U,V}^2Z(x'))&= \frac{\sigma^2}{C_{\nu,\alpha}}\sum_{l=0}^\infty (\alpha^2+\lambda_l)^{-\nu-\frac{p}{2}}f_l(x)\nabla^2 f_l(V_{x'},U_{x'}),\\
    \cov(D_VZ(x),D_{U,V}^2Z(x'))&= \frac{\sigma^2}{C_{\nu,\alpha}}\sum_{l=0}^\infty (\alpha^2+\lambda_l)^{-\nu-\frac{p}{2}}\nabla f_l(V_x)\nabla^2 f_l(V_{x'},U_{x'}).
\end{align}
In case of the RBF,
\begin{align}
    K_{U,V}(x,x')&=\frac{\sigma^2}{C_{\nu,\alpha}}\sum_{l=0}^\infty e^{-\frac{\lambda_l^2}{2\alpha^2}}\nabla f_l^2(V_x,U_x)\nabla^2 f_l(V_{x'},U_{x'}),\\
    \cov(Z(x),D_{U,V}^2Z(x'))&= \frac{\sigma^2}{C_{\nu,\alpha}}\sum_{l=0}^\infty e^{-\frac{\lambda_l^2}{2\alpha^2}}f_l(x)\nabla^2 f_l(V_{x'},U_{x'}),\\
    \cov(D_VZ(x),D_{U,V}^2Z(x'))&= \frac{\sigma^2}{C_{\nu,\alpha}}\sum_{l=0}^\infty e^{-\frac{\lambda_l^2}{2\alpha^2}}\nabla f_l(V_x)\nabla^2 f_l(V_{x'},U_{x'}).
\end{align}
\end{corollary}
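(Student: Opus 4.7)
The plan is to substitute the spectral expansions in \Cref{eqn:Matern,eqn:RBF} directly into the covariance identities established in \Cref{lem:D_UVZ_cov,lem:Z_D_UVZ;D_VZD_UVZ}, and then to commute the differential operators $\nabla_{1212}$, $\nabla_{22}$, $\nabla_{122}$ with the infinite sum. The pivotal observation is that each term of the spectral expansion factors as $c_l f_l(x)f_l(x')$, so that a partial covariant derivative in the first slot acts only on $f_l(x)$ and one in the second slot only on $f_l(x')$. Term-by-term differentiation therefore yields $\nabla_{22}K(x,x')(V_{x'},U_{x'})=\sum_l c_l f_l(x)\nabla^2 f_l(V_{x'},U_{x'})$, $\nabla_{122}K(x,x')(V_x,V_{x'},U_{x'})=\sum_l c_l \nabla f_l(V_x)\nabla^2 f_l(V_{x'},U_{x'})$, and $\nabla_{1212}K(x,x')(V_x,V_{x'},U_x,U_{x'})=\sum_l c_l \nabla^2 f_l(V_x,U_x)\nabla^2 f_l(V_{x'},U_{x'})$, which, plugged into the lemma identities, are precisely the claimed formulas for both the Mat\'ern and the RBF cases.

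The main obstacle is rigorously justifying the interchange of the differential operators with the infinite summation. The plan is to verify absolute and uniform convergence on $\MM\times\MM$ of the formally differentiated series. I will invoke Weyl's law $\lambda_l\sim l^{2/p}$ together with the standard H\"ormander/Seeley sup-norm bounds on Laplace--Beltrami eigenfunctions and their covariant derivatives, namely $\|\nabla^k f_l\|_\infty\lesssim \lambda_l^{(p-1)/4+k/2}$. Applied to the $K_{U,V}$ series for Mat\'ern, the $l$-th term is bounded by a constant multiple of $\lambda_l^{-\nu-p/2}\,\lambda_l^{(p+3)/2}$, and using Weyl's law the tail behaves like $\sum_l l^{(3-2\nu)/p}$, which is summable precisely when $\nu>(p+3)/2$, i.e.\ under the 2-MSD hypothesis of \Cref{thm:2MSD}. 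The other two series (for $\cov(Z,D^2_{U,V}Z)$ and $\cov(D_VZ,D^2_{U,V}Z)$) involve strictly fewer derivatives, so they converge under the same hypothesis. For the RBF kernel, the exponential factor $e^{-\lambda_l/(2\alpha^2)}$ dominates any polynomial growth from the derivative bounds and Weyl's law, giving unconditional absolute convergence.

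Having established uniform convergence of each differentiated series, a standard dominated convergence or uniform-limit argument allows exchanging the order of the differential operator with the summation; the resulting series then agrees pointwise with the covariance expressions dictated by \Cref{eqn:D_VZD_VZ,eqn:ZD_UVZ,eqn:D_VZD_UVZ}, completing the identification. The remaining work is essentially bookkeeping: confirming that the tensor contractions $(V_x,V_{x'},U_x,U_{x'})$ distribute correctly through the factored spectral terms, and noting that the constants $C_{\nu,\alpha}$ and $C_{\infty,\alpha}$ pass through the sum unchanged because they are independent of the spatial arguments. The hardest conceptual step, then, is not algebraic but analytic: pinning down the eigenfunction derivative bounds and matching them with Weyl's law to recover exactly the smoothness threshold $\nu>(p+3)/2$ that was assumed in \Cref{thm:2MSD}, ensuring internal consistency between the existence of the curvature process and the validity of its spectral covariance formulas.
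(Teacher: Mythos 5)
Your proposal is correct and takes essentially the same route as the paper: the paper's proof likewise substitutes the spectral series into the difference-quotient expressions from \Cref{lem:D_UVZ_cov} and \Cref{lem:Z_D_UVZ;D_VZD_UVZ} and passes the limits through the sum term by term, exploiting the factorization $c_l f_l(x)f_l(x')$ so that derivatives in each slot act only on the corresponding factor. Your explicit justification of the interchange via the eigenfunction bounds $\|\nabla^k f_l\|_\infty\lesssim\lambda_l^{(p-1)/4+k/2}$ and Weyl's law is left implicit in the paper's corollary proof, but it is exactly the machinery the paper deploys in the proofs of \Cref{thm:MSC}, \Cref{thm:1MSD} and \Cref{thm:2MSD}, and your threshold $\nu>(p+3)/2$ matches.
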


\begin{theorem}\label{thm:joint}
In \Cref{eqn:C_W}, $C_{Z, Z}(x,x')=K(x,x')$, $C_{Z, D_VZ}(x,x')$ is given by \Cref{lem:ZD_VZ}, $C_{D_VZ, D_VZ}(x,x')$ by \Cref{lem:D_VZ_cov}, $C_{Z, D^2_{U,V}Z}(x,x')$ and $C_{D_VZ, D^2_{U,V}Z}(x,x')$ by \Cref{lem:Z_D_UVZ;D_VZD_UVZ} and, finally, $C_{D^2_{U,V}Z, D^2_{U,V}Z}(x,x')$ by \Cref{lem:D_UVZ_cov}. In particular, the Mat\'ern and RBF kernels express their matrix-valued cross-covariance functions as 
$$ 
\frac{\sigma^2}{C_{\nu,\alpha}}\sum_{l=0}^\infty(\alpha^2+\lambda_l)^{-\nu-\frac{p}{2}} H(x,x') \text{ and } \frac{\sigma^2}{C_{\nu,\alpha}}\sum_{l=0}^\infty e^{-\frac{\lambda_l}{2\alpha^2}} H(x,x')\;,
$$
respectively, where 
\begin{equation*}
    H(x,x') = \begin{bmatrix}f_l(x)f_l(x') & f_l(x)\nabla f_l(V_{x'})&f_l(x)\nabla^2 f_l(V_{x'},U_{x'})\\
    \nabla f_l(V_{x'})f_l(x)&\nabla f_l(V_x)\nabla f_l(V_{x'})&\nabla f_l(V_x)\nabla^2 f_l(V_{x'},U_{x'})\\
    \nabla^2 f_l(V_{x'},U_{x'})f_l(x)&\nabla^2 f_l(V_{x'},U_{x'})\nabla f_l(V_x)&\nabla^2 f_l(V_{x},U_{x})\nabla^2 f_l(V_{x'},U_{x'})
    \end{bmatrix}.
\end{equation*}
\end{theorem}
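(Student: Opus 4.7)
The statement divides naturally into two independent claims: first, that the six distinct blocks of $C_W(x,x')$ are delivered by the named lemmas; second, that each block admits the compact spectral form encoded in the matrix $H(x,x')$ when $K$ is Mat\'ern or RBF. My plan is to dispose of the first claim by direct citation of the block structure, and then to build the spectral identities by substituting the kernel expansions \Cref{eqn:Matern}--\Cref{eqn:RBF} and commuting differentiation with summation.

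For the first part, I would traverse the $3\times 3$ block structure of $C_W$ entry by entry. The $(1,1)$ block is $K(x,x')$ by definition. The $(1,2)$ and $(2,1)$ blocks are the two orderings of $\cov(Z,D_VZ)$ supplied by \Cref{lem:ZD_VZ}, and the $(2,2)$ block is $K_V$ from \Cref{lem:D_VZ_cov}. The remaining blocks involving the curvature component are given by \Cref{eqn:ZD_UVZ} and \Cref{eqn:D_VZD_UVZ} in \Cref{lem:Z_D_UVZ;D_VZD_UVZ}, together with $K_{U,V}$ from \Cref{lem:D_UVZ_cov}. Because each component of $W(x)$ is a mean-square limit of linear functionals of the parent GP $Z$, the joint vector $(Z(x),W(x)\T)\T$ is automatically a vector-valued GP, so $C_W(x,x')$ is a well-defined cross-covariance and the asymmetries in the off-diagonal blocks noted in \Cref{lem:Z_D_UVZ;D_VZD_UVZ} are compatible with the general constraint $C_W(x,x')\T = C_W(x',x)$.

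For the second part, I would substitute $K(x,x')=\tfrac{\sigma^2}{C_{\nu,\alpha}}\sum_{l=0}^\infty c_l f_l(x)f_l(x')$, with $c_l=(\alpha^2+\lambda_l)^{-\nu-p/2}$ in the Mat\'ern case and $c_l=e^{-\lambda_l/(2\alpha^2)}$ in the RBF case, into each of the six analytic expressions above. Since the spectral coefficients $c_l$ do not depend on $x$ or $x'$, all partial gradients of $K$ act only on the eigenfunction factors. The identities recorded in \Cref{cly:ZD_VZMat} and \Cref{cly:D_UVZMat} then furnish five of the required blocks directly, and the surviving entry $\cov(Z(x),D^2_{U,V}Z(x'))$ drops out by applying $\nabla_{22}$ to the spectral sum. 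Factoring $c_l$ out of each entry exposes a rank-one $3\times 3$ array whose $(i,j)$ entry pairs the $i$-th component of the ``jet'' vector $(f_l(x),\nabla f_l(V_x),\nabla^2 f_l(V_x,U_x))$ with the $j$-th component of its counterpart at $x'$, which is exactly the matrix $H(x,x')$ in the statement.

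The principal technical obstacle is legitimising the interchange of the differential operators $D_V$ and $D^2_{U,V}$ with the infinite spectral sum. This reduces to showing that the termwise-differentiated series converge in the relevant mean-square sense, which in turn requires controlling $\|\nabla f_l\|_\infty$ and $\|\nabla^2 f_l\|_\infty$ against the decay of $c_l$. These bounds are precisely what is used in the proofs of \Cref{thm:1MSD} and \Cref{thm:2MSD} under the hypotheses $\nu>(p+1)/2$ and $\nu>(p+3)/2$; Weyl's law together with standard Sogge-type eigenfunction estimates on compact Riemannian manifolds give the needed summability. The RBF case is easier, as the super-polynomial decay $e^{-\lambda_l/(2\alpha^2)}$ dominates any polynomial growth of the eigenfunction-derivative norms. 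Once this interchange is justified, the remainder of the argument collapses to the direct block-by-block computation described above.
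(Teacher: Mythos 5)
Your proposal is correct and follows essentially the same route as the paper, whose proof simply combines \Cref{lem:D_VZ_cov}, \Cref{lem:ZD_VZ}, \Cref{lem:D_UVZ_cov}, \Cref{cly:ZD_VZMat}, and \Cref{cly:D_UVZMat} to populate the blocks of \Cref{eqn:C_W}. Your additional remarks on justifying the termwise differentiation of the spectral sums via the eigenfunction bounds from \Cref{thm:1MSD} and \Cref{thm:2MSD} are a sensible elaboration of what the paper leaves implicit.
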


We conclude with an analogue of \Cref{prop:MSD_iso} for isotropic kernels for the curvature process. To connect 2-MSD with the derivative of $K$ at zero, we need more assumptions.
\begin{proposition}\label{prop:2-MSD}
If $U=V$, then $Z$ is 2-MSD if and only if $K'(0)=K^{(3)}(0)=0$ and $K^{(4)}<\infty$. 
\end{proposition}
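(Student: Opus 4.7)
The plan is to reduce the assertion to a one-dimensional stationary-process calculation along a unit-speed geodesic through $x$, and then convert the smoothness condition on the one-dimensional covariance back into derivative conditions on $K$ at the origin. Fix $x \in \MM$ and a unit vector $v \in T_x\MM$, and let $\gamma$ be the unit-speed geodesic with $\gamma(0) = x$ and $\dot\gamma(0) = v$. Working within the injectivity radius at $x$, the geodesic distance satisfies $d_\MM(\gamma(s),\gamma(s+\tau)) = |\tau|$, and by isotropy of $K$ the restriction $h(s) \coloneqq Z(\gamma(s))$ is a zero-mean stationary Gaussian process on an interval of $\RR$ with covariance $r(\tau) = K(|\tau|)$. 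Taking $V$ to be the parallel transport of $v$ along $\gamma$ (which is harmless since we only need 2-MSD at $x$ with $U = V = v$), the derivative and curvature processes restrict to $D_V Z(\gamma(s)) = h'(s)$ and $D^2_{V,V} Z(x) = h''(0)$ in the mean-squared sense. Hence $Z$ is 2-MSD at $x$ with $U = V$ if and only if $h$ is 2-MSD at $0$.

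Next, I would invoke the standard characterization for stationary Gaussian processes on $\RR$: $h$ is 1-MSD iff $r''(0)$ exists, and in that case $h'$ is itself stationary with covariance $\mathrm{Cov}(h'(s),h'(s+\tau)) = -r''(\tau)$; consequently $h$ is 2-MSD iff $-r''$ is twice differentiable at $0$, that is, iff $r^{(4)}(0)$ exists and is finite. The direct identity
\begin{equation*}
\EE\left(\frac{h'(\tau)-h'(0)}{\tau}\right)^{2} = \frac{2\bigl[r''(\tau)-r''(0)\bigr]}{\tau^{2}}
\end{equation*}
makes the equivalence explicit: convergence as $\tau \to 0$ is tantamount to $r''$ admitting a second-order Taylor expansion at $0$, which, by evenness of $r$, forces the cubic term to vanish and the quartic coefficient to be finite.

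The final step is to translate $C^4$ smoothness of $r(\tau) = K(|\tau|)$ at the origin into conditions on $K$. Applying the chain rule to $|\tau|$ shows that for $\tau > 0$ the derivatives of $r$ agree with those of $K$, whereas for $\tau < 0$ the odd-order derivatives pick up a sign flip. Matching one-sided derivatives at $0$ therefore forces $K'(0) = 0$ and $K^{(3)}(0) = 0$, while the even-order derivatives match automatically; the finiteness requirement becomes $K^{(4)}(0) < \infty$. Combining with the previous step yields the claimed equivalence.

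The principal technical hurdle is the initial reduction: one must verify that locally choosing $V$ to be the parallel transport of $v$ along $\gamma$ faithfully captures 2-MSD in the sense of \Cref{def:2-MSD}, and that perturbations of $V$ away from this parallel field contribute only at sub-leading order in $t$, so they do not affect the mean-squared limit. Once that is in place, the remaining arguments are standard calculus together with classical stationary-process facts, in the same spirit as the proofs of \Cref{thm:1MSD} and \Cref{thm:2MSD}.
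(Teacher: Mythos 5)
Your argument is correct in substance but takes a genuinely different route from the paper. The paper proves \Cref{prop:2-MSD} by unfolding the second difference quotient into a double increment over four points $a=\exp_{\exp_x(tV_x)}(sV_{\exp_x(tV_x)})$, $b=\exp_x(tV_x)$, $c=\exp_x(sV_x)$, $d=x$, computing the second moment of $Z(a)-Z(b)-Z(c)+Z(d)$ from the isotropic kernel at the pairwise geodesic distances, setting $s=t$ so that everything collapses to $t^{-4}\left(6K(0)-8K(t)+2K(2t)\right)$, and Taylor-expanding $K$ at $0$ to read off $K'(0)=K^{(3)}(0)=0$ and $K^{(4)}(0)<\infty$. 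You instead restrict $Z$ to a unit-speed geodesic, use isotropy to see that $h(s)=Z(\gamma(s))$ is stationary with covariance $r(\tau)=K(|\tau|)$, and import the classical one-dimensional fact that $h$ is 2-MSD iff $r^{(4)}(0)$ exists, then translate back via the sign flips in the odd one-sided derivatives of $\tau\mapsto K(|\tau|)$. Your route buys cleanliness: it avoids the interchange of $\lim_t\lim_s$ with $\lim_{t,s}$ and the explicit fourth-order finite-difference expansion (your two-point identity $\EE\bigl((h'(\tau)-h'(0))/\tau\bigr)^2=2\bigl(r''(\tau)-r''(0)\bigr)/\tau^2$ is the compressed form of the paper's four-point computation), at the cost of leaning on the stationary-process literature rather than being self-contained. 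Both arguments share the same implicit reduction, which you honestly flag but do not resolve: each effectively takes $V$ along $\gamma$ to be the velocity (parallel) field, so that $\exp_{\gamma(t)}(tV_{\gamma(t)})=\gamma(2t)$ and the relevant distances are $t$ and $2t$ exactly; for a general smooth $V$ with $V_x=\dot\gamma(0)$ one must still check that the order-$t$ deviation $\nabla_{\dot\gamma}V$ contributes only a $D_{\nabla_{\dot\gamma}V}Z$ term controlled by 1-MSD. Since the paper's own proof makes the same tacit assumption, this does not put you behind it, but neither proof is complete on this point.
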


If $U\neq V$, the geodesic distance between the exponential maps $\exp_x(tU_x)$ and $\exp_x(sV_x)$ is almost intractable unless the manifold is Euclidean, so finding the exact coefficients in the Taylor expansion up to order-4 is extremely challenging. Fortunately, assumption $U=V$ is not unreasonable, as it is common in the literature for the Euclidean domain \citep[see, e.g.,][]{halder2024bayesian}. The next section elaborates on Bayesian inference for our proposed differential processes.

\section{Bayesian Inference}\label{sec:bayes}
Our results in the previous section lay the foundations for the validity of the process $W(x)$ on $\mathcal{M}$. Probabilistic inference for $W(x)$ requires the assumption of a distribution for $Z(x)$---we assume $Z(x)\sim GP(\mu(x;\beta),K)$, where $\mu(x;\beta)$ is twice differentiable in $\MM$ and $K$ is a covariance kernel. Let $\Z=\left(Z(x_1),Z(x_2),\ldots, Z(x_N)\right)\T$ be the observed realizations in $\MM$ with mean $\widetilde{\mu}=\left(\mu(x_1,\beta),\mu(x_2,\beta),\ldots,\mu(x_N,\beta)\right)\T$ and covariance $C_{Z,Z}(x_i,x_j) = K(x_i,x_j;\theta)$, where $\theta$ is a vector of parameters specifying $K$, $i,j=1,2,\ldots, N$. Statistical inference on derivatives and curvatures is sought at $x_0$, an arbitrary location in $\MM$. \Cref{eqn:C_W} yields $P(\Z,W(x_0)\mid\theta) \coloneqq \mathcal{N}_{N+2}(\mu_0,\Sigma_0)$, where $\displaystyle \mu_0 = \left(
        \widetilde{\mu},
        \<\nabla \mu(x_0),V_{x_0}\>,
        \nabla^2\mu(x_0)(V_{x_0},U_{x_0})\right)\T$ and
\begin{equation}\label{eq:full-posterior}
    \begin{split}
        \Sigma_0 &= \left[\begin{array}{ccc}
            C_{\Z,\Z} &  C_{\Z,D_VZ} & C_{\Z,D^2_{U,V}Z}\\
             C_{D_VZ,\Z} & K_V(x_0,x_0) & \nabla_{122} K(x_0,x_0)(V_{x_0},V_{x_0},U_{x_0})\\
             C_{D^2_{U,V}Z,\Z} & \nabla_{122} K(x_0,x_0)(V_{x_0},V_{x_0},U_{x_0}) & K_{U,V}(x_0,x_0)
        \end{array}\right].
    \end{split}
\end{equation}
The blocks in $\Sigma_0$ are given by $C_{\Z,\Z} =  (C_{Z,Z}(x_i,x_j))_{i,j=1,2,\ldots,N}$ is an $N\times N$ covariance matrix, the entries $C_{\Z,D_VZ} = \left(\nabla_2K(x_1,x_0)(V_{x_0}),\ldots,\nabla_2K(x_N,x_0)(V_{x_0})\right)\T$, $C_{D_VZ,\Z} = C_{\Z,D_VZ}\T$,  $C_{\Z,D^2_{U,V}Z} = (\nabla_{22} K(x_1,x_0)(V_{x_0},U_{x_0}), \ldots, \nabla_{22} K(x_N,x_0)(V_{x_0},U_{x_0}))\T$ and $C_{\Z,D^2_{U,V}Z}\T = C_{D^2_{U,V}Z,\Z}$.
The posterior predictive distribution for the differential processes at $x_0$ is
\begin{equation}\label{eq:posterior_predicitive}
    P\left(W(x_0)\mid \Z\right) = \int P\left(W(x_0)\mid \Z,\theta\right)P\left(\theta\mid\Z\right)d\theta.
\end{equation}
Posterior sampling is performed one-for-one--drawing one instance of $W(x_0)$ for every posterior sample of $\theta$. The conditional predictive distribution for the differential processes $W(x_0)\mid\Z,\theta\sim \mathcal{N}_2(\mu_1,\Sigma_1)$ can be obtained from \Cref{eq:full-posterior},
\begin{equation}\label{eq:grad-posterior}
    \begin{split}
        \mu_1 &=  \left(\begin{array}{c}
        \<\nabla \mu(x_0),V_{x_0}\>\\
        \nabla^2\mu(x_0)(V_{x_0},U_{x_0}) 
        \end{array}\right)+ \left(\begin{array}{c}
            C_{D_VZ,\Z} \\
            C_{D^2_{U,V}Z,\Z}  
        \end{array}\right)\T C_{\Z,\Z}^{-1}\left(\Z-\widetilde{\mu}\right),\\
        \Sigma_1 &= \left(\begin{array}{cc}
             K_V(x_0,x_0) & \nabla_{122} K(x_0,x_0)(V_{x_0},V_{x_0},U_{x_0})\\
             \nabla_{122} K(x_0,x_0)(V_{x_0},V_{x_0},U_{x_0}) & K_{U,V}(x_0,x_0)
        \end{array}\right)\\&\pushright{-\left(\begin{array}{c}
            C_{D_VZ,\Z} \\
            C_{D^2_{U,V}Z,\Z}  
        \end{array}\right)\T C_{\Z,\Z}^{-1}\left(\begin{array}{c}
           C_{\Z,D_VZ} \\
           C_{\Z,D^2_{U,V}Z}
        \end{array}\right)}.
    \end{split}
\end{equation}

Of particular relevance is the inference for $W(x_0)$ conditional on the scattered response $Y(x)$ or partially observed process realizations in $\MM$, generated from a latent process $Z(x)$ corrupted by a white noise disturbance. For example, $Y(x)$ can be modeled as 
\begin{equation}\label{eq:hierarchical_model}
    Y(x) = \mu(x; \beta)+Z(x)+\epsilon(x),
\end{equation}
where $Z(x)\sim GP(0,K(\cdot;\theta))$ is zero-centered and $\epsilon(x)$ is white-noise. Note that $Y(x)$ itself may not satisfy the process smoothness assumptions available for $Z(x)$. Nevertheless, we can infer the rates of change for $Z(x)$ conditional on the data obtained from $Y(x)$. We denote $\widetilde{Y}=\left(Y(x_1),\cdots,Y(x_N)\right)^\top$ as a realization of $Y$.

The joint posterior for the differential processes is obtained as $P(W(x_0)\mid\widetilde{Y})=\int P(W(x_0)\mid\Z,\theta)\;P(\Z\mid \widetilde{Y},\theta)\;P(\theta\mid\widetilde{Y})\,d\theta \,d\Z$. We assign priors on $\theta=(\sigma^2,\alpha,\tau^2,\nu)\T$, which leads to 
\begin{equation}\label{eqn:posterior}
\begin{split}
    P(\theta\mid\widetilde{Y})&\propto IG(\alpha\mid a_\alpha,b_\alpha) \times IG(\sigma^2\mid a_\sigma,b_\sigma)\times IG(\tau^2\mid a_\tau,b_\tau) \times U(\nu\mid a_\nu,b_\nu)\\&\pushright{\times\; \mathcal{N}_p(\beta\mid\mu_\beta,\Sigma_\beta)\times\prod_{i=1}^{N}\mathcal{N}\left(Y(x_i)\mid \mu(x_i;\beta), C_{\Z,\Z}+\tau^2 I_N\right)},
\end{split}
\end{equation}
where $\sigma^2$ is the process variance, $\alpha$ is the length-scale, $\tau^2$ is the nugget (variance of a white noise process), $\nu$ is the smoothness parameter, $IG$ denotes the inverse Gamma distribution, $U$ denotes the uniform distribution, $\mathcal{N}_p$ denotes the $p$-variate Gaussian with $\mathcal{N}$ denoting the univariate Gaussian and $\mu(x_i;\beta)=X(x_i)\T\beta$. 
The posterior in \Cref{eqn:posterior} features a computationally stable collapsed version obtained by marginalizing $\Z$. From a computational point of view, $\sigma^2$ and $\tau^2$ are often weakly identified and reparameterize the collapsed covariance $\sigma^2R_{\Z,\Z}+\tau^2I_N=v\{\rho R_{\Z,\Z} + (1-\rho)I_N\}$ where $v = \sigma^2+\tau^2$, $\rho = \frac{\sigma^2}{v}$ and $R_{\Z, \Z}$ is the correlation matrix. Prior distributions can then be placed on $v\sim IG(a_v, b_v)$ and $\rho\sim Beta(a_\rho, b_\rho)$. 


\section{Special cases}\label{sec:examples}
Although our results are applicable in any compact Riemannian manifold, here we choose to provide specific examples that are computationally feasible and can be visualized. We first discuss the sphere, $\mathbb{S}^p$, which offers exact trigonometric expressions for eigen-functions of the Laplacian and a closed-form expression for the exponential map and geodesic distance. This yields an exact spectrum. Next, we investigate surfaces in $\RR^3$ which are represented as triangulated meshes for practical purposes. Here, the manifold $\mathcal{M}$ is an arbitrary object featuring complex geometry, for example, a teapot or a bunny. 
As a result, 
the spectrum is unknown and requires a numerical approximation, which is to be expected in practical applications. \Cref{fig:S1andS2} provides a visual reference for the results in \Cref{thm:MSC,thm:1MSD,thm:2MSD} for $\mathbb{S}^1$, $\mathbb{S}^2$ and the Stanford Bunny (SB), which serves as our example of choice for a surface in $\RR^3$. With increasing $s=0,1,2$ (top left to right) and $\nu=1,2,3$ (middle and bottom left to right), the process realizations become visibly smoother. 
\begin{figure}[t]
    \centering
    \includegraphics[width=\linewidth]{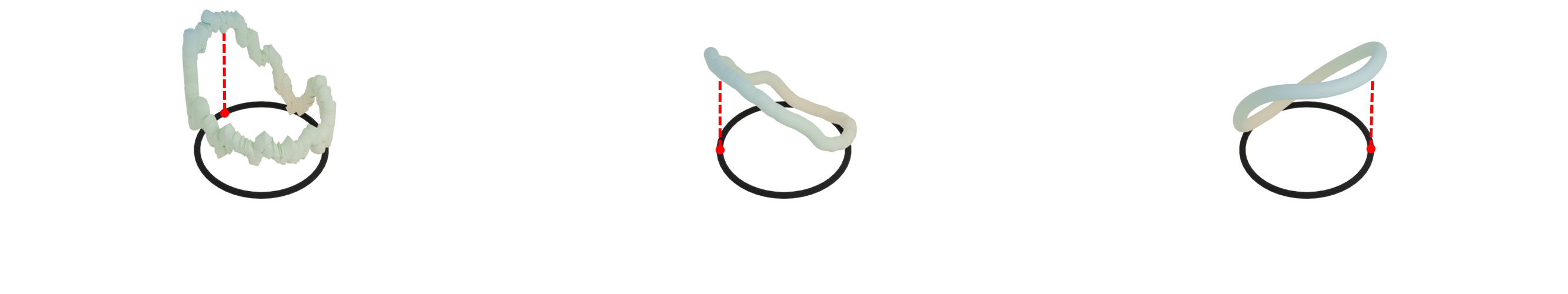}
    \includegraphics[width=\linewidth]{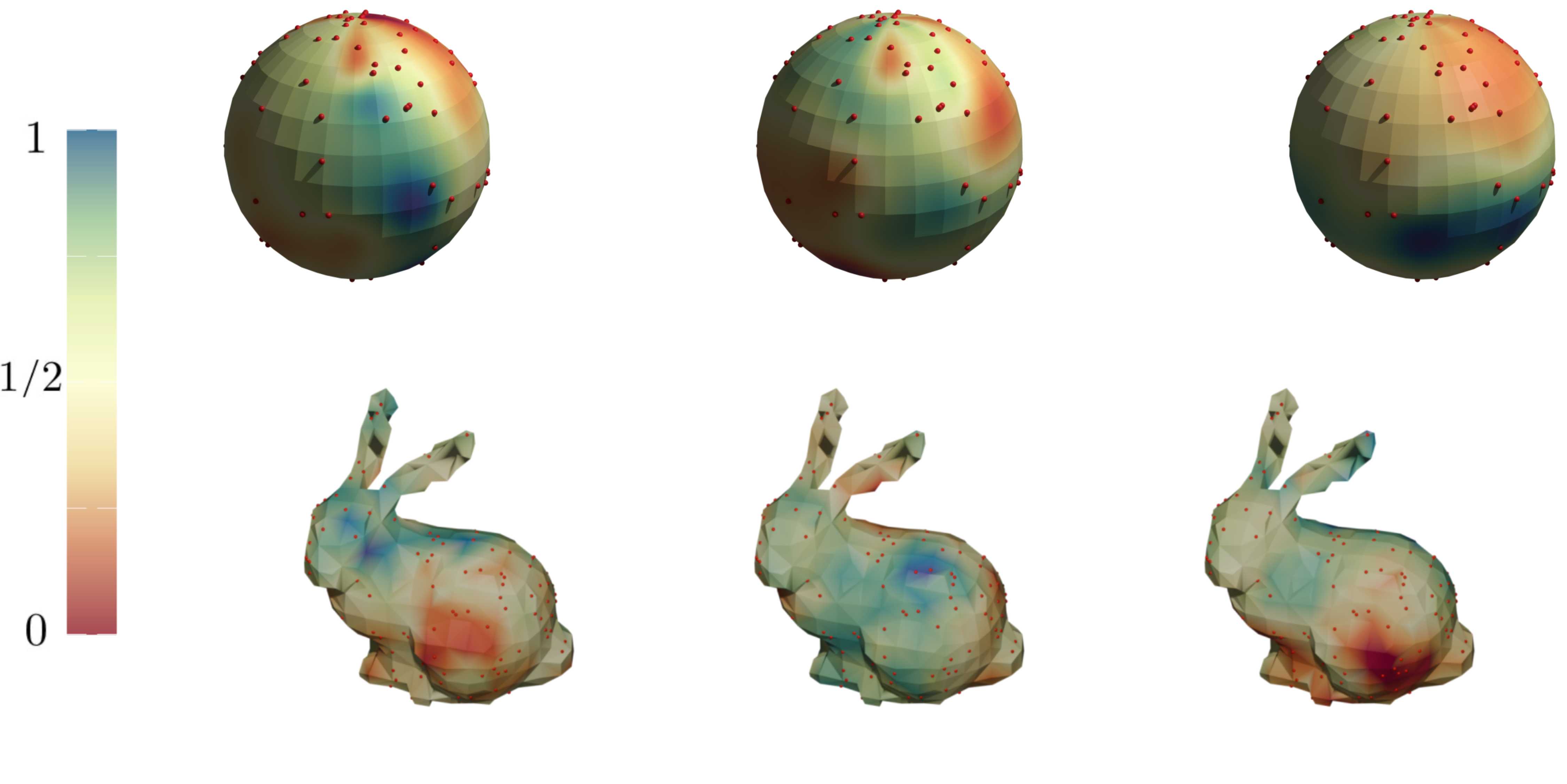}
    \caption{Interpolated process realizations on $\mathbb{S}^1$ (top row; height above circle indicates magnitude), $\mathbb{S}^2$ (middle row) and the SB (bottom row) simulated from the Mat\'ern kernel for $\mathbb{S}^1$ (see \Cref{eqn:Matern0_S1,eqn:Matern1_S1,eqn:Matern2_S1}), the Legendre-Mat\'ern kernel for $\mathbb{S}^2$ (see \Cref{tab:S2_Kernels}), and the Mat\'ern-type kernel in \Cref{eqn:Matern} respectively with increasing smoothness from left to right. Simulated points are shown as red dots.} 
    \label{fig:S1andS2}
\end{figure}

\subsection{Sphere, \texorpdfstring{$\mathbb{S}^p$}{Sp}}\label{subsec:sphere} Our results in the previous sections, when applied to $\mathbb{S}^p$, produce a global and intrinsic characterization of the smoothness of the process. This differs from previous studies 
concerning smoothness of process realizations over spheres which rely on restricting the process to a great circle that is isometric to $[0,1)$ \citep[see, e.g.,][pp. 145--146, Theorem 1]{guinness2016isotropic}. We begin with deriving the necessary and sufficient conditions on $K$ for the process to be MSC, 1-MSD and 2-MSD on $\mathbb{S}^p$. We assume $Z(x)\sim \GP(0,K)$ for $x\in\mathbb{S}^p$. The following 
results establish the validity of 
$D_VZ(x)$.

\begin{theorem}\label{thm:MSC1-MSD_iso}
The process $Z$ is MSC if and only if $K$ is continuous at $0$ and $Z$ is 1-MSD if and only if $K(t)=K(0) + \frac{1}{2}K''(0)t^2 + O(t^2)$, i.e., $K'(0)=0$ and $|K''(0)|<\infty$. 
\end{theorem}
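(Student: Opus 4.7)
The plan is to derive both equivalences by specializing the general isotropic criteria from \Cref{prop:MSC_iso,prop:MSD_iso} to $\mathbb{S}^p$, where the spherical geometry makes the required mean-squared computations transparent. The key observation I would start from is that for any $x \in \mathbb{S}^p$ and any unit-speed geodesic $\gamma$ with $\gamma(0) = x$, one has $d_{\mathbb{S}^p}(\gamma(t), x) = |t|$ for $|t| < \pi$. Coupled with isotropy, this yields
\begin{equation*}
    \mathbb{E}\bigl[(Z(\gamma(t)) - Z(x))^2\bigr] = 2\bigl(K(0) - K(|t|)\bigr),
\end{equation*}
and this single identity will drive both halves of the theorem.

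For the MSC statement, I would simply let $t \to 0$ in the identity above. The right-hand side vanishes if and only if $K$ is right-continuous at $0$, which, since $K$ is defined on $[0, \infty)$, is the same as continuity at $0$. Both directions are immediate, and by the transitive action of $\mathrm{SO}(p+1)$ on $\mathbb{S}^p$, the conclusion at a single base point and direction transfers to every $x \in \mathbb{S}^p$ and every choice of geodesic through $x$.

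For the 1-MSD statement, I would divide the displayed identity by $t^2$ to obtain $\mathbb{E}\bigl[\bigl(t^{-1}(Z(\gamma(t)) - Z(x))\bigr)^2\bigr] = 2\bigl(K(0) - K(|t|)\bigr)/t^2$. The forward implication then proceeds in two steps: first, the existence of a finite limit forces $K(|t|) - K(0) = O(t^2)$, and since a nonzero linear term in $K$ at $0$ would make the ratio blow up like $1/t$, I conclude $K'(0) = 0$. Then an application of L'H\^opital's rule, or equivalently a second-order Taylor expansion at $0$, identifies the limit as $-K''(0)$, which must be finite. For the converse, I would substitute the assumed expansion $K(t) = K(0) + \tfrac{1}{2} K''(0) t^2 + o(t^2)$ into the ratio and take $t \to 0$ to verify that the limit exists and equals $-K''(0)$, so $Z$ is 1-MSD.

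The one step requiring care, which I view as the main obstacle, is interpreting the requirement $|K''(0)| < \infty$ without presupposing that $K$ is classically twice-differentiable at $0$. I would address this by \emph{defining} $K''(0)$ intrinsically as $\lim_{t \to 0} 2(K(t) - K(0))/t^2$ once $K'(0) = 0$ has been established, so that the existence of this limit is simultaneously the definition of $K''(0)$ and the finiteness requirement appearing in the theorem. With that convention in place, the rest of the argument is a direct specialization of \Cref{prop:MSC_iso,prop:MSD_iso} to $\mathbb{S}^p$.
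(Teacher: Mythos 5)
Your proposal is correct and follows essentially the same route as the paper: both reduce the mean-squared increment to $2(K(0)-K(t))$ using the fact that $d_{\mathbb{S}^p}(\exp_x(tv),x)=t$, then read off continuity at $0$ for MSC and, after dividing by $t^2$ and Taylor-expanding, the conditions $K'(0)=0$ and $|K''(0)|<\infty$ for 1-MSD. Your added care in defining $K''(0)$ as the limit $\lim_{t\to0}2(K(t)-K(0))/t^2$ is a reasonable refinement of what the paper leaves implicit, but it does not change the argument.
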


Next, we assume that $Z$ is 1-MSD and derive the covariance of the derivative, $D_VZ$.
\begin{theorem}\label{thm:derivative_Sp}
The covariance $K_V(x,x')\coloneqq \cov(D_VZ(x),D_VZ(x'))$ is given by
\begin{equation}\label{eqn:D_VZ_Sp}
\begin{split}
    K_V(x,x')&=-\frac{K'(d)\<V_x,V_{x'}\>}{(1 - \<x,x'\>^2)^{1/2}}\\
    &+ \left(K''(d) - K'(d)\frac{\<x,x'\>}{(1 - \<x,x'\>^2)^{1/2}}\right)\frac{\<V_x,x'\>}{(1 - \<x,x'\>^2)^{1/2}} \frac{\<x,V_{x'}\> }{(1 - \<x,x'\>^2)^{1/2}},
\end{split}
\end{equation}
where $d=d_{\mathbb{S}^p}(x,x')=\arccos(\<x,x'\>)$ is the geodesic distance. 
Note that \Cref{eqn:D_VZ_Sp} is not a function of $d$, and therefore $D_VZ$ is no longer isotropic. 
\end{theorem}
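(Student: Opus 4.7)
The plan is to invoke \Cref{lem:D_VZ_cov}, which gives $K_V(x,x')=(\nabla_{12}K(x,x'))(V_x,V_{x'})$, and to evaluate the right-hand side by chain rule applied to the decomposition $K(x,x')=K(d)$ with $d=\arccos\<x,x'\>$. Because the round metric on $\mathbb{S}^p$ is the restriction of the ambient Euclidean inner product to tangent spaces, a directional derivative along $V_x\in T_x\mathbb{S}^p$ can be computed by extending $K$ to a smooth function on a neighborhood in $\RR^{p+1}\times\RR^{p+1}$ and differentiating ambiently; the tangency constraints $\<V_x,x\>=0$ and $\<V_{x'},x'\>=0$ ensure that normal components contribute nothing.

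First I would compute $\nabla_1 K(x,x')(V_x)=K'(d)\,D_{V_x}d$. Differentiating $\cos d=\<x,x'\>$ along $V_x$ with $x'$ fixed gives $-\sin(d)\,D_{V_x}d=\<V_x,x'\>$, so $D_{V_x}d=-\<V_x,x'\>/(1-\<x,x'\>^2)^{1/2}$ and hence $\nabla_1 K(x,x')(V_x)=-K'(d)\<V_x,x'\>/(1-\<x,x'\>^2)^{1/2}$. Next I would apply $D_{V_{x'}}$ to this scalar, treating $V_x$ as independent of $x'$. A product-rule expansion splits into three contributions: (i) differentiating $K'(d)$ using $D_{V_{x'}}d=-\<x,V_{x'}\>/(1-\<x,x'\>^2)^{1/2}$, which produces the $K''(d)\<V_x,x'\>\<x,V_{x'}\>$ term; (ii) differentiating the factor $(1-\<x,x'\>^2)^{-1/2}$ via $\tfrac{d}{du}(1-u^2)^{-1/2}=u(1-u^2)^{-3/2}$ with $u=\<x,x'\>$, which contributes the $K'(d)\<x,x'\>$ correction inside the parentheses in \Cref{eqn:D_VZ_Sp}; and (iii) differentiating $\<V_x,x'\>$ to yield $\<V_x,V_{x'}\>$, which accounts for the leading $-K'(d)\<V_x,V_{x'}\>/(1-\<x,x'\>^2)^{1/2}$ term. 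Collecting the three contributions and factoring recovers precisely the formula in the statement.

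The main obstacle is purely bookkeeping: two copies of the factor $-1/\sin(d)$ arising from $D_{V_x}d$ and $D_{V_{x'}}d$ combine with the outer minus sign coming from the first differentiation, so care is required to confirm each of the three terms lands with the sign and with the power of $(1-\<x,x'\>^2)$ claimed. The non-isotropy of $K_V$ is then transparent from the final expression, since the bilinear factors $\<V_x,x'\>$ and $\<x,V_{x'}\>$ depend on the geometric relation of $V_x$ and $V_{x'}$ to the chord joining $x$ and $x'$ and cannot be absorbed into any function of $d(x,x')$ alone.
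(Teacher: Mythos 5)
Your proposal is correct and follows essentially the same route as the paper: both reduce the claim to the mixed second derivative of $K(\arccos\<x,x'\>)$, use $D\,d=-\<\cdot,\cdot\>/(1-\<x,x'\>^2)^{1/2}$ from differentiating $\cos d=\<x,x'\>$, and obtain the same three product-rule terms (the paper unrolls this as an iterated limit of difference quotients along the explicit geodesics $\exp_x(tV_x)$ and $\exp_{x'}(sV_{x'})$, differentiating in $s$ first and then $t$, which is just your computation with the slots swapped). Your sign and power bookkeeping checks out against \Cref{eqn:D_VZ_Sp}.
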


We have resonance with analogous results for process smoothness in the Euclidean space. The kernel of the derivative process derived from a GP with an isotropic kernel is not necessarily isotropic~\citep[see, e.g.,][discussion after eq. (5') and eq. (12)]{banerjee2003directional}. The covariance of the derivative process for $x,x'\in\RR^2$ is $K_V(x,x') = \frac{K'(\|\delta\|)}{\|\delta\|} + \left(K''(\|\delta\|)-\frac{K'(\|\delta\|)}{\|\delta\|}\right)\frac{\delta\delta\T}{\|\delta\|}$, where $\delta=x-x'$ and $||\cdot||$ is the Euclidean distance. We observe that as $x'\to x$, $d\to 0$ and $\left(K''(d) - K'(d)\frac{\<x,x'\>}{(1 - \<x,x'\>^2)^{1/2}}\right) \to 0$, which serves as the rationale behind the representation in \Cref{eqn:D_VZ_Sp}, \Cref{thm:derivative_Sp}. This is crucial for the next result when considering ${\rm Var}(D_VZ(x))=K_V(x,x)$. 
\begin{corollary}\label{cly:K_V_Sp}
    $K_V(x,x)\coloneqq \lim_{t\to0} K_V(x,\exp_x(tu))=-\<V_x,V_x\>K''(0)$ for any $u\in T_x \mathbb{S}^p$, where $T_x\mathbb{S}^p$ is the tangent space at $x$ on $\mathbb{S}^p$. 
\end{corollary}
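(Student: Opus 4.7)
The plan is to substitute $x' = \exp_x(tu)$ into the explicit formula in \Cref{eqn:D_VZ_Sp} and evaluate the limit as $t \to 0$, using the closed-form geodesic on the sphere together with the tangency conditions $\langle x,V_x\rangle = 0$ and $\langle x',V_{x'}\rangle = 0$.

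First I would substitute the standard expression $\exp_x(tu) = \cos(t\|u\|)\,x + \sin(t\|u\|)\,u/\|u\|$, which gives $\langle x,x'\rangle = \cos(t\|u\|)$, geodesic distance $d = t\|u\|$, and $(1 - \langle x,x'\rangle^2)^{1/2} = \sin(t\|u\|)$. Next, since $Z$ is 1-MSD, \Cref{thm:MSC1-MSD_iso} yields $K'(0)=0$ together with a Taylor expansion $K'(d) = K''(0)\, d + o(d) = K''(0)\, t\|u\| + o(t)$.

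For the first term of \Cref{eqn:D_VZ_Sp}, dividing by $\sin(t\|u\|)$ and using $\sin(t\|u\|)/(t\|u\|) \to 1$ together with smoothness of $V$ (so $V_{x'}\to V_x$), I obtain
\begin{equation*}
-\frac{K'(d)\,\langle V_x,V_{x'}\rangle}{\sin(t\|u\|)} \;\longrightarrow\; -K''(0)\,\langle V_x,V_x\rangle.
\end{equation*}
For the second term I would show that its scalar prefactor $K''(d) - K'(d)\langle x,x'\rangle/\sin(t\|u\|)$ tends to $K''(0) - K''(0)\cdot 1 = 0$ by the same Taylor expansion and continuity of $K''$ at $0$, while the two remaining fractions stay bounded. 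Explicitly, $\langle V_x,x'\rangle/\sin(t\|u\|) = \langle V_x,u\rangle/\|u\|$ (since $\langle V_x,x\rangle=0$), and differentiating the constraint $\langle x',V_{x'}\rangle = 0$ gives $\langle x,V_{x'}\rangle/\sin(t\|u\|) = -\cos(t\|u\|)^{-1}\langle u, V_{x'}\rangle/\|u\| \to -\langle u,V_x\rangle/\|u\|$. Both are finite, so the second term vanishes in the limit, leaving the stated value $-K''(0)\langle V_x,V_x\rangle$.

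The main technical subtlety will be dealing with the apparent $0/0$ indeterminate form in the second term: the prefactor goes to zero while the two ratios individually have nontrivial finite limits that depend on $u$. The key is that the product must vanish (independently of $u$) in order for the limit to yield a well-defined $K_V(x,x)$, and this follows precisely because the Taylor expansion $K'(d)= K''(0)\,d + o(d)$ makes the prefactor $O(d)/\sin(t\|u\|) \cdot (\text{smooth factors})$, which converges to zero. The only regularity needed beyond 1-MSD is continuity of $K''$ at $0$, which is implicit in the expansion used in \Cref{thm:MSC1-MSD_iso}.
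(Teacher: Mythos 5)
Your proposal is correct and follows essentially the same route as the paper's proof: substitute the explicit spherical exponential map into \Cref{eqn:D_VZ_Sp}, use $K'(0)=0$ and the expansion $K'(d)=K''(0)d+o(d)$ to show the first term converges to $-K''(0)\langle V_x,V_x\rangle$, and observe that the prefactor $K''(d)-K'(d)\langle x,x'\rangle/(1-\langle x,x'\rangle^2)^{1/2}$ of the second term tends to zero while the accompanying ratios remain bounded. The only cosmetic difference is that you control $\langle x,V_{x'}\rangle/\sin(t\|u\|)$ via the tangency constraint $\langle x',V_{x'}\rangle=0$, whereas the paper Taylor-expands $V_{\exp_x(tu)}=V_x+t\nabla_uV+O(t^2)$; both yield the same boundedness and hence the same conclusion.
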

We study the covariance between $Z$ and $D_VZ$ to carry out statistical inference for the joint process 
$\left(Z, D_VZ\right)\T$ :
\begin{theorem}\label{thm:ZD_VZ_Sp}
Let $d=d_{\mathbb{S}^p}(x,x')=\arccos(\<x,x'\>)$, then
\begin{equation}\label{eq:ZD_VZ_Sp}
    \cov(Z(x),D_VZ(x'))= -\frac{K'(d)\<x,V_{x'}\>}{(1-\<x,x'\>^2)^{1/2}}
\end{equation}
\end{theorem}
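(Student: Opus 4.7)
The plan is to reduce the computation to \Cref{lem:ZD_VZ}, which yields $\cov(Z(x),D_VZ(x'))=\nabla_2 K(x,x')(V_{x'})$, and then evaluate this partial gradient for the isotropic kernel $K(x,x')=K(\arccos\<x,x'\>)$ by applying the chain rule in two stages: first through $\arccos$, and then through the linear functional $y\mapsto\<x,y\>$ restricted to $\mathbb{S}^p$.

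Fixing $x$, the map $K(x,\cdot)$ is smooth on $\mathbb{S}^p$ away from the antipode $-x$. Writing $d(x,x')=\arccos\<x,x'\>$ and using the chain rule, one has
\begin{equation*}
    \nabla_2 K(x,x')=K'(d)\,\nabla_2 d(x,x'),\qquad \nabla_2 d(x,x')=-\frac{1}{(1-\<x,x'\>^2)^{1/2}}\,\nabla_2 \<x,\cdot\>\big|_{x'}\,,
\end{equation*}
so the problem reduces to identifying the Riemannian gradient on $\mathbb{S}^p$ of the linear functional $\varphi_x:y\mapsto \<x,y\>$.

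The key geometric step is a tangent projection. In ambient $\RR^{p+1}$, the Euclidean gradient of $\varphi_x$ is simply $x$; since $T_y\mathbb{S}^p=\{w\in\RR^{p+1}:\<w,y\>=0\}$, the intrinsic gradient is obtained by subtracting the normal component, giving $\nabla_{\mathbb{S}^p}\varphi_x(y)=x-\<x,y\>y$. Pairing with $V_{x'}\in T_{x'}\mathbb{S}^p$ and using $\<x',V_{x'}\>=0$ collapses the expression to $\<x,V_{x'}\>$, and assembling the pieces produces
\begin{equation*}
    \nabla_2 K(x,x')(V_{x'})=-\frac{K'(d)\<x,V_{x'}\>}{(1-\<x,x'\>^2)^{1/2}},
\end{equation*}
which is the stated identity.

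I expect the main obstacle to be careful bookkeeping rather than a deep difficulty: one must distinguish the ambient and intrinsic gradients, verify that $V_{x'}$ being tangent kills the normal component $\<x,x'\>x'$, and address the apparent singularity as $x'\to x$. The last point is mild because under the 1-MSD hypothesis \Cref{thm:MSC1-MSD_iso} forces $K'(0)=0$, so $K'(d)/\sin d$ has a finite limit and the formula extends continuously, consistent with \Cref{cly:K_V_Sp}.
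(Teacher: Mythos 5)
Your proposal is correct and is essentially the paper's argument: the paper likewise starts from the covariance limit $\lim_{s\to 0}\frac{1}{s}[K(x,\exp_{x'}(sV_{x'}))-K(x,x')]$ (the content of \Cref{lem:ZD_VZ}) and applies the chain rule to $s\mapsto K(\arccos\<x,A(s)\>)$ along the explicit geodesic $A(s)=\cos(s)x'+\sin(s)V_{x'}$, which yields $\<x,A'(0)\>=\<x,V_{x'}\>$ exactly where your tangent-projection step kills the normal component $\<x,x'\>x'$. The only cosmetic difference is that the paper differentiates along the curve directly rather than first forming the intrinsic gradient $x-\<x,x'\>x'$, and it also records the rescaling/degenerate-$V$ reductions up front; your closing remark on the $x'\to x$ limit matches \Cref{cly:ZD_VZ_Sp}.
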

If joint evaluation of the process and its derivative is desired at the same location $x\in \mathbb{S}^p$, then a special case of \Cref{thm:ZD_VZ_Sp} arises requiring $\cov(Z(x),D_VZ(x))$.
\begin{corollary}\label{cly:ZD_VZ_Sp}
    $\cov(Z(x),D_VZ(x)) = \lim_{t\to0} \cov(Z(\exp_x(tu)),D_VZ(x))=0$ for any $u\in T_x \mathbb{S}^p$. 
\end{corollary}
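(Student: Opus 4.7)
The plan is to apply \Cref{thm:ZD_VZ_Sp} to the pair $(\exp_x(tu),x)$ and pass to the limit $t\to 0$; the quantity $\cov(Z(x),D_VZ(x))$ is in fact defined by this limit, as the formula in \Cref{eq:ZD_VZ_Sp} carries an apparent $0/0$ singularity along the diagonal. Substituting the first argument $\exp_x(tu)$ and the second argument $x$ into \Cref{eq:ZD_VZ_Sp} gives
\[
\cov(Z(\exp_x(tu)),D_VZ(x)) \;=\; -\,\frac{K'(d_t)\,\<\exp_x(tu),V_{x}\>}{\bigl(1-\<\exp_x(tu),x\>^2\bigr)^{1/2}},
\]
where $d_t = d_{\mathbb{S}^p}(\exp_x(tu),x) = t\|u\|$ is the geodesic distance. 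The case $u=0$ is trivial (both sides are $0$), so assume $u\neq 0$.

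The key step is to exploit the closed-form spherical exponential map $\exp_x(tu) = \cos(t\|u\|)\,x + \sin(t\|u\|)\,u/\|u\|$, together with the tangency relations $\<x,x\>=1$, $\<x,u\>=0$, and crucially $\<x,V_x\>=0$ (since $V_x\in T_x\mathbb{S}^p$). These yield
\[
\<\exp_x(tu),x\>=\cos(t\|u\|),\qquad \<\exp_x(tu),V_x\>=\tfrac{\sin(t\|u\|)}{\|u\|}\<u,V_x\>,
\]
and hence $\bigl(1-\<\exp_x(tu),x\>^2\bigr)^{1/2}=|\sin(t\|u\|)|$. For small $t>0$ the two $\sin(t\|u\|)$ factors share sign and cancel, so the expression collapses to
\[
\cov(Z(\exp_x(tu)),D_VZ(x)) \;=\; -\,K'(t\|u\|)\,\frac{\<u,V_x\>}{\|u\|}.
\]

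Letting $t\to 0$ and invoking \Cref{thm:MSC1-MSD_iso}, which forces $K'(0)=0$ whenever $Z$ is 1-MSD, gives the claimed limit of $0$. The main---and really only---obstacle is resolving the apparent singularity in \Cref{eq:ZD_VZ_Sp} along the diagonal; once the spherical exponential map is unpacked explicitly, this reduces to a clean cancellation of $\sin(t\|u\|)$ between numerator and denominator, after which the conclusion is immediate from the 1-MSD hypothesis on $K$.
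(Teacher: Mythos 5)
Your proof is correct and follows essentially the same route as the paper: both substitute the closed-form spherical exponential map into the formula of \Cref{thm:ZD_VZ_Sp}, cancel the $\sin$ factor between numerator and denominator, and conclude from $K'(0)=0$ (guaranteed by 1-MSD). The only cosmetic difference is that the paper normalizes $\|u\|=1$ at the outset, whereas you carry a general $\|u\|$ and treat $u=0$ separately.
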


\begin{theorem}\label{thm:D^2_Sp}
If $Z$ is isotropic in $\mathbb{S}^p$, then $Z$ is 2-MSD if and only if $K(t)=K(0) + K''(0)t^2/2 + K^{(4)}(0)t^4/4!+O(t^5)$, that is, $K'(0)=K^{(3)}(0)=0$, $|K''(0)|<\infty$, and $|K^{(4)}(0)|<\infty$.
\end{theorem}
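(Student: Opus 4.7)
The approach is to directly analyze the defining limit of 2-MSD using the explicit form of $K_V$ on $\mathbb{S}^p$ from \Cref{thm:derivative_Sp}. For a unit-speed geodesic $\gamma$ with $\gamma(0)=x$ and $\gamma'(0)=u\in T_x\mathbb{S}^p$,
\begin{equation*}
\EE\bigl[(D_VZ(\gamma(t))-D_VZ(x))^2\bigr] = K_V(\gamma(t),\gamma(t)) + K_V(x,x) - 2K_V(\gamma(t),x),
\end{equation*}
so the task is to Taylor expand the right-hand side in $t$ up to order $t^2$ and then characterize when the quotient by $t^2$ admits a finite limit for every smooth vector field $V$ and every unit $u$.

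First I would parameterize $\gamma(t) = \cos(t)\,x + \sin(t)\,u$ in the ambient $\RR^{p+1}$, so that $\<\gamma(t),x\>=\cos(t)$ and $d_{\mathbb{S}^p}(\gamma(t),x)=t$, and use smoothness of $V$ to write $V_{\gamma(t)} = V_x + t\,\dot V + O(t^2)$ in the ambient space for some $\dot V\in\RR^{p+1}$. The diagonal piece $K_V(\gamma(t),\gamma(t))+K_V(x,x)$ reduces via \Cref{cly:K_V_Sp} to $-K''(0)(\|V_{\gamma(t)}\|^2+\|V_x\|^2)$, which has a smooth $O(1)$ expansion provided $|K''(0)|<\infty$. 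The off-diagonal piece $K_V(\gamma(t),x)$ is the delicate part: each of the two scalar building blocks $-K'(t)/\sin(t)$ and $K''(t)-K'(t)\cot(t)$ appearing in \Cref{eqn:D_VZ_Sp} requires Taylor expansion near $t=0$. Using the 1-MSD prerequisite $K'(0)=0$ (from \Cref{thm:MSC1-MSD_iso}), each building block takes the form $c_0 + \tfrac{1}{2}K^{(3)}(0)\,t + c_2\,t^2 + O(t^3)$, with $c_0$ and $c_2$ depending explicitly on $K''(0)$ and $K^{(4)}(0)$.

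Next I would combine everything and group by powers of $t$. The individually singular contributions coming from the three normalized inner products $\<V_{\gamma(t)},V_x\>/\sin(t)$, $\<V_{\gamma(t)},x\>/\sin(t)$, and $\<\gamma(t),V_x\>/\sin(t)$ cancel against the diagonal terms by virtue of $V_x\perp x$ (from $V_x\in T_x\mathbb{S}^p$) together with the first-order expansion of $V_{\gamma(t)}$ in the ambient space. After cancellation, the expansion of $K_V(\gamma(t),\gamma(t))+K_V(x,x)-2K_V(\gamma(t),x)$ starts at order $t$, and its coefficient is a nonzero bilinear form in $(V_x,u)$ times $K^{(3)}(0)$. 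Freedom in choosing $V$ and $u$ thus forces $K^{(3)}(0)=0$ as a necessary condition. Under this added assumption, the expansion starts at order $t^2$ with coefficient that is finite if and only if $|K^{(4)}(0)|<\infty$. Together with $K'(0)=0$ and $|K''(0)|<\infty$ from 1-MSD, this gives the stated equivalence; the converse implication (sufficiency) follows by reading the same expansion in the opposite direction.

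The main obstacle will be the bookkeeping around the $0/0$ indeterminacies in \Cref{eqn:D_VZ_Sp}: each of the three normalized inner products is individually singular at $t=0$, and the cancellation of their divergent pieces depends on both the geometric constraint $V_x\perp x$ and the ambient first-order Taylor coefficient of $V_{\gamma(t)}$. Only after these cancellations are carried out can the coefficient of $t$ in the final expansion be cleanly isolated as a multiple of $K^{(3)}(0)$, so that its vanishing over all $(V,\gamma)$ yields $K^{(3)}(0)=0$, and the coefficient of $t^2$ then exhibits $K^{(4)}(0)$ as the only parameter that still needs a finiteness assumption.
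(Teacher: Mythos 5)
Your proposal is correct and follows essentially the same route as the paper: both express $\EE[(D_VZ(\gamma(t))-D_VZ(x))^2]$ as the second difference $K_V(\gamma(t),\gamma(t))-2K_V(\gamma(t),x)+K_V(x,x)$, evaluate the diagonal terms via \Cref{cly:K_V_Sp}, Taylor-expand the off-diagonal term from \Cref{eqn:D_VZ_Sp} together with $V_{\gamma(t)}=V_x+t\nabla_uV+O(t^2)$, and read off that the surviving $1/t$-order contribution is a multiple of $K^{(3)}(0)$ while the finite part requires $|K^{(4)}(0)|<\infty$. The cancellation structure you anticipate (using $V_x\perp x$ and the first-order ambient expansion of $V$) is exactly what the paper's bookkeeping carries out.
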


The above results concretely establish the existence and validity of the process $D_VZ(x)$ on $\mathbb{S}^p$. Bayesian inference on $D_VZ(x)$ can now run its course following the details in \Cref{sec:bayes} substituting entries in the cross-covariance matrix in \Cref{eqn:C_W} with the quantities derived from the above results. 
\Cref{thm:D^2_Sp} provides the necessary and sufficient conditions for $Z$ to be 2-MSD. Considering inference for $W(x)$ and following the strategy in \Cref{thm:derivative_Sp} while observing that $x'\to x$, $d\to 0$ and $\left(K''(d) - K'(d)\frac{\<x,x'\>}{(1 - \<x,x'\>^2)^{1/2}}\right) \to 0$, the analytic form of the cross-covariance involving the curvature process is theoretically computable, but tedious. We skip the details 
and
conclude 
with studies of smoothness for commonly used kernels for $\mathbb{S}^1$ and $\mathbb{S}^2$. We first show the smoothness for kernels on $\mathbb{S}^2$ provided in Table~1, \cite{guinness2016isotropic} \citep[also see,][]{lang2015isotropic}.
\begin{theorem}\label{thm:S2_kernels}
    The smoothness of 13 kernels found in \cite{guinness2016isotropic} are given by the last three columns of Table \ref{tab:S2_Kernels}.
\end{theorem}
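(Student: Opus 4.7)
The plan is to exploit the necessary and sufficient conditions established in \Cref{thm:MSC1-MSD_iso,thm:D^2_Sp}, which characterize MSC, 1-MSD, and 2-MSD purely in terms of the behavior of the isotropic kernel $K:[0,\pi]\to\mathbb{R}$ at the origin. For each of the 13 kernels cataloged in \cite{guinness2016isotropic}, the strategy is direct verification: (i) check continuity of $K$ at $0$; (ii) compute $K'(0)$ and $K''(0)$ to diagnose 1-MSD; (iii) compute $K^{(3)}(0)$ and $K^{(4)}(0)$ to diagnose 2-MSD. Each kernel is then classified accordingly and its row in \Cref{tab:S2_Kernels} filled in.

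For kernels given in closed form---for instance the powered-exponential, spherical, Askey, $C^2$-Wendland, and chordal-Mat\'ern families---one computes the Taylor expansion of $K(t)$ about $t=0$ directly and reads off the parity and magnitude of the first four derivatives. The diagnostic is essentially combinatorial: an odd power of $t$ appearing with a nonzero coefficient in the expansion forces $K^{(2k+1)}(0)\ne 0$ and rules out the corresponding level of differentiability, while a singular term such as $|t|^{\alpha}$ with $\alpha<2$ (as in the exponential or powered-exponential with exponent less than $2$) preserves MSC but blocks 1-MSD. A finite polynomial floor like $t^{2}$ gives 1-MSD but typically fails 2-MSD unless the next nontrivial power is at least $t^{4}$.

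For kernels defined through spectral Schoenberg expansions---most prominently the Legendre-Mat\'ern, multiquadric, sine-power, and related series---termwise differentiation of $K(t)=\sum_{\ell\ge 0}c_\ell P_\ell(\cos t)$ at $t=0$ must be justified by uniform convergence. Here the plan is to invoke the bound $|P_\ell^{(k)}(1)|\lesssim \ell^{2k}$, which reduces the existence and value of $K^{(2k)}(0)$ to summability of $\ell^{2k}c_\ell$; the surviving odd-order derivatives at $t=0$ all vanish by symmetry, so the $K^{(3)}(0)=0$ condition in \Cref{thm:D^2_Sp} is automatic and only the even-order summability tests are binding. Consequently the smoothness class is dictated by the polynomial decay rate of $c_\ell$, reproducing the familiar Mat\'ern phenomenology: $c_\ell\asymp \ell^{-2\nu-1}$ gives 1-MSD for $\nu>1$ and 2-MSD for $\nu>2$ on $\mathbb{S}^2$.

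The main obstacle is the Legendre-Mat\'ern family, where the smoothness index $\nu$ couples continuously to the differentiability order and one must confirm that the $t\to 0$ behavior of the series matches the spectral-decay prediction uniformly in the remaining parameters. Once that case is in hand, the remaining work is bookkeeping: applying the three differentiability diagnostics mechanically to each of the remaining entries and tabulating the resulting MSC, 1-MSD, and 2-MSD flags. No new ideas beyond \Cref{thm:MSC1-MSD_iso,thm:D^2_Sp} and standard Legendre-polynomial asymptotics are required.
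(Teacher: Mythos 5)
Your overall strategy coincides with the paper's: invoke the characterizations in \Cref{thm:MSC1-MSD_iso} and \Cref{thm:D^2_Sp} to reduce everything to the behaviour of $K(0)-K(t)$ as $t\to 0$, then Taylor-expand each closed-form kernel (chordal Mat\'ern, powered exponential, generalized Cauchy, multiquadric, spherical, Askey, sine power, Wendland) and read off the leading exponent. For those rows your plan works and matches the paper's computations. One caveat on that part: your blanket remark that odd-order derivatives ``vanish by symmetry'' is false for exactly the kernel where it matters --- the $C^2$--Wendland is $1$-MSD but fails $2$-MSD precisely because $K^{(3)}(0)=\frac{2}{3}\tau(\tau-1)(\tau+1)\alpha^3\neq 0$, and the sine-power and powered-exponential kernels have genuine fractional singularities $O(t^{\nu})$ at the origin rather than vanishing odd derivatives; so the third-order term must actually be computed for the Wendland rows rather than dismissed.

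The substantive gap is in the spectral rows. Your termwise-differentiation criterion, summability of $\ell^{2k}c_\ell$ with $c_\ell\asymp\ell^{-2\nu-1}$, yields the sharp thresholds $\nu>1$ for $1$-MSD and $\nu>2$ for $2$-MSD of the Legendre--Mat\'ern (you state these explicitly), and it would likewise yield ``$1$-MSD iff $\nu>1$'' for the circular Mat\'ern. Neither agrees with \Cref{tab:S2_Kernels}, which records $\nu>3/2$ and $\nu>5/2$ for the Legendre--Mat\'ern and an unconditional ``No'' for the circular Mat\'ern. The paper does not derive these entries by spectral summability at all: it identifies the Legendre--Mat\'ern with the manifold Mat\'ern of \Cref{eqn:Matern} via spherical harmonics and imports the \emph{sufficient} conditions of \Cref{thm:MSC}, \Cref{thm:1MSD} and \Cref{thm:2MSD} with $p=2$, which lose a factor of $\lambda_l^{1/4}$ per derivative through the sup-norm eigenfunction bounds $\|f_l\|_\infty\lesssim\lambda_l^{(p-1)/4}$, $\|\nabla f_l\|_\infty\lesssim\lambda_l^{(p+1)/4}$; the truncated Legendre--Mat\'ern is then dispatched by \Cref{thm:truncation}. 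So if you execute your plan as written you will fill in those table rows with different conditions than the theorem asserts, and you need to either switch to the paper's route for those rows or explicitly reconcile your sharper thresholds with the table's (merely sufficient) ones --- as it stands the proposal does not establish the statement in the form claimed.
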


\begin{table}[htb]
\centering
\caption{List of covariance functions on $\mathbb{S}^2$ and their smoothness.}\label{tab:S2_Kernels}
\resizebox{\linewidth}{!}{
    \begin{tabular}{c|c|c|c|c|c}
    \hline\hline
      Name & Expression & Parameter Values & MSC&1-MSD & 2-MSD\\ \hline
    Chordal Mat\'ern &$\left(\alpha 2\sin\left(\frac{t}{2}\right)\right)^\nu K_\nu(\alpha 2\sin(\frac{t}{2}))$ &$\alpha,\nu>0$ &Yes& $\nu>1$ &$\nu>2$\\ \hline
    Circular Mat\'ern &$\sum_{l=-\infty}^\infty (\alpha^2+l(l+1))^{-\nu-1/2}\exp(ilt)$&$\alpha,\nu>0$ &Yes&No &No\\ \hline
    Legendre-Mat\'ern &$\sum_{l=0}^\infty (\alpha^2+l(l+1))^{-\nu-1/2}P_l(\cos t)$&$\alpha,\nu>0$ & $\nu> 1/2$ & $\nu> 3/2$&$\nu>5/2$\\ \hline
    Truncated Legendre-Mat\'ern &$\sum_{l=0}^T (\alpha^2+l(l+1))^{-\nu-1/2}P_l(\cos t)$&$\alpha,\nu>0$ &Yes & Yes&Yes\\ \hline
    Bernoulli &$1+\alpha+\sum_{l\neq 0}|l|^{-2n}\exp(ilt )$ &$\alpha>0$, $n\in\NN$&Yes&No &No\\ \hline
    Powered Exponential &$\exp(-(\alpha t )^\nu)$ &$\alpha>0$, $\nu\in(0,1]$ &Yes&No &No\\ \hline
    Generalized Cauchy & $\left(1+(\alpha t )^\nu\right)^{-\tau/\nu}$&$\alpha,\tau>0$, $\nu\in(0,1]$ &Yes&No &No\\ \hline
    Multiquadric &$(1-\tau)^{2\alpha}/\left(1+\tau^2-2\tau\cos  t \right)^\alpha$ &$\alpha>0$, $\tau\in(0,1)$ &Yes&Yes &Yes\\ \hline
    Sine Power &$1-\left(\sin(\frac{ t }{2})\right)^\nu$&$\nu\in(0,2)$&Yes &No &No\\ \hline
    Spherical &$(1+\frac{\alpha t }{2})(1-\alpha t )_+^2$&$\alpha>0$&Yes &No &No\\ \hline
    Askey & $(1-\alpha t )_+^\tau$&$\alpha>0$, $\tau\geq 2$&Yes &No &No\\ \hline
    $C^2$--Wendland &$(1+\tau\alpha t )(1-\alpha t )_+^\tau$ &$\alpha\geq \frac{1}{\pi}$, $\tau\geq 4$&Yes  &Yes & No\\ \hline
    $C^4$--Wendland &$\left(1+\tau\alpha t +\frac{\tau^2-1}{3}(\alpha t )^2\right)(1-\alpha t )_+^\tau$ &$\alpha\geq \frac{1}{\pi}$, $\tau\geq 6$&Yes  &Yes &Yes\\ \hline
\end{tabular}
}
\end{table}
Next, we consider the smoothness of the realizations of the process for $Z(x)\sim \GP(0,K_\nu)$, $x \in\mathbb{S}^{1}$. 
We define $u=u(x,x')=\alpha\left(|x-x'|-\frac{1}{2}\right)$ \citep[see, e.g.,][Lemma 4.2]{li2023inference}, where $x$, $x'$ are identified with $x=\exp(2\pi i\theta_x)$, $x'=\exp(2\pi i\theta_{x'})$, respectively, and $|x-x'|=|\theta_x-\theta_{x'}|$. Of interest are closed-form expressions for differing values of $s=0,1,2$, where the fractal parameter is $\nu=s+\frac{1}{2}$:

\begin{align}
K_{1/2}(x,x')&=\sigma^2\left(\cosh\left(\frac{\alpha}{2}\right)\right)^{-1}\cosh\left(u\right),\label{eqn:Matern0_S1}\\
K_{3/2}(x,x')&=\sigma^2\;C_{3/2,\alpha}^{-1}\left(a_{1,0}\cosh(u)+a_{1,1}u\sinh(u)\right),\label{eqn:Matern1_S1}\\
K_{5/2}(x,x')&=\sigma^2\;C_{5/2,\alpha}^{-1}\left(a_{2,0}\cosh(u)+a_{2,1}u\sinh(u)+a_{2,2}u^2\cosh(u)\right),\label{eqn:Matern2_S1}
\end{align}
where 
\begin{align*}
a_{1,0}&=\frac{2\pi^2}{\alpha^2}\left(1+\frac{\alpha}{2}\coth\left(\frac{\alpha}{2}\right)\right),~a_{1,1}= -\frac{2\pi^2}{\alpha^2},~C_{3/2,\alpha}=
    a_{1,0}\cosh\left(\frac{\alpha}{2}\right)+a_{1,1}\frac{\alpha}{2}\sinh\left(\frac{\alpha}{2}\right),\\
    a_{2,0}&=\frac{\pi^4}{\alpha^4}\left\{6-\frac{\alpha^2}{2}+3\alpha\coth\left(\frac{\alpha}{2}\right)+\alpha^2\coth^2\left(\frac{\alpha}{2}\right)\right\},~a_{2,1}=-\frac{2\pi^4}{\alpha^4}\left(3+\alpha\coth\left(\frac{\alpha}{2}\right)\right),\\
    a_{2,2}&=\frac{2\pi^4}{\alpha^4},~C_{5/2}  =a_{2,0}\cosh\left(\frac{\alpha}{2}\right)+a_{2,1}\frac{\alpha}{2}\sinh\left(\frac{\alpha}{2}\right)+a_{2,2}\frac{\alpha^2}{4}\cosh\left(\frac{\alpha}{2}\right).
\end{align*}
The process $Z$ is MSC, 1 and 2-MSD if $\nu>\frac{p-1}{2}=\frac{1}{2}$, $\nu>\frac{p+1}{2}=\frac{3}{2}$ and $\nu>\frac{p+3}{2}=\frac{5}{2}$, respectively. 
Considering smoothness of process realizations arising from the above kernels, the following proposition provides conditions on $s$ for the validity of 
$D_VZ(x)$. 
\begin{proposition}\label{prop:S1_KV} 
For $s=0$ we have MSC in process realizations. For, $s=1$ and $2$ we have 1 and 2-MSD in process realizations. The respective processes possess valid covariance functions. Their explicit expressions are detailed in \Cref{apdx:subsec:sphere}.
\end{proposition}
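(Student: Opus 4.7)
The plan is to prove this in two stages: first establish the mean-square smoothness claims, then argue the validity of the induced covariances for the derivative and curvature processes. The cleanest route for the smoothness claims is to apply \Cref{thm:MSC}, \Cref{thm:1MSD}, and \Cref{thm:2MSD} with $p=1$. The thresholds $\nu > (p-1)/2$, $\nu > (p+1)/2$, $\nu > (p+3)/2$ become $\nu > 0, 1, 2$ respectively, which are satisfied precisely by $\nu = 1/2, 3/2, 5/2$ for $s = 0, 1, 2$. As an independent verification -- and as preparation for writing down the resulting cross-covariances -- I would also check the sphere-specific criteria of \Cref{thm:MSC1-MSD_iso} and \Cref{thm:D^2_Sp} directly on the closed forms \Cref{eqn:Matern0_S1}, \Cref{eqn:Matern1_S1}, and \Cref{eqn:Matern2_S1}.

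Because the kernels are expressed through $u = \alpha(t - 1/2)$, the chain rule gives $K^{(k)}(t) = \alpha^{k}\, \partial_u^{k} K$ for the one-sided derivatives at $t=0^{+}$. For $s=0$, continuity of $K_{1/2}$ at the origin is immediate from $\cosh$. For $s=1$, the condition $K_{3/2}'(0)=0$ of \Cref{thm:MSC1-MSD_iso} reduces to the algebraic identity $(a_{1,0}+a_{1,1})\sinh(\alpha/2) + (a_{1,1}\alpha/2)\cosh(\alpha/2) = 0$, which follows upon substituting $a_{1,0} = \tfrac{2\pi^{2}}{\alpha^{2}}\bigl(1+\tfrac{\alpha}{2}\coth(\alpha/2)\bigr)$ and $a_{1,1} = -\tfrac{2\pi^{2}}{\alpha^{2}}$; finiteness of $K_{3/2}''(0)$ is clear from the smoothness of the hyperbolic factors. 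For $s=2$, the analogous conditions $K_{5/2}'(0) = K_{5/2}^{(3)}(0) = 0$ together with $|K_{5/2}''(0)|, |K_{5/2}^{(4)}(0)|<\infty$ of \Cref{thm:D^2_Sp} are again purely algebraic identities in the coefficients $a_{2,j}$.

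Having the smoothness, the existence of $D_V Z$ (for $s\ge 1$) and $D_{U,V}^{2} Z$ (for $s=2$) as valid GPs follows from \Cref{lem:D_VZ_cov} and \Cref{lem:D_UVZ_cov}, while the explicit covariance expressions on $\mathbb{S}^{1}$ are obtained by inserting the computed derivatives $K'(d), K''(d), K^{(3)}(d), K^{(4)}(d)$ into \Cref{eqn:D_VZ_Sp} of \Cref{thm:derivative_Sp}, into \Cref{thm:ZD_VZ_Sp}, and into the curvature analogue foreshadowed after \Cref{cly:ZD_VZ_Sp}. The main obstacle is purely algebraic bookkeeping: the $s=2$ kernel is a quadratic polynomial in $u$ multiplied by hyperbolic functions, so its fourth derivative and the resulting entries of the joint covariance for $\bigl(Z, D_V Z, D_{U,V}^{2} Z\bigr)^{\top}$ are lengthy. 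Since they are mechanical consequences of the chain rule and the cancellations already illustrated for $s=1$, I would relegate the final closed-form expressions for $K_V$ and $K_{U,V}$ to \Cref{apdx:subsec:sphere}.
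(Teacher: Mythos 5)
Your proposal is correct and follows essentially the same route as the paper: the smoothness claims come from \Cref{thm:MSC}, \Cref{thm:1MSD} and \Cref{thm:2MSD} with $p=1$, and the explicit covariances are obtained by differentiating the closed-form kernels in $u=\alpha(|x-x'|-\tfrac12)$ via the chain rule and substituting $K'$ and $K''$ into \Cref{eqn:D_VZ_Sp}. Your additional direct verification that $K_{3/2}'(0)=0$ (and the analogous identities for $s=2$) via \Cref{thm:MSC1-MSD_iso} and \Cref{thm:D^2_Sp} is a sound consistency check that the paper leaves implicit, but it does not change the argument.
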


\subsection{Surfaces in \texorpdfstring{$\RR^3$}{R3}} 
Every compact \emph{orientable} smooth 2-dimensional manifold, referred to simply as a surface, can be smoothly embedded in $\RR^3$. In practice, they are represented as polyhedral meshes \citep[see e.g.,][]{turk1994zippered}. Such discrete representations are obtained using the finite element method (FEM). Galerkin's FEM \citep[see e.g.,][]{ciarlet2002finite,brenner2008mathematical} is one such method that is used to generate polyhedral mesh representations of such manifolds. We work with triangular mesh representations and refer to it simply as a mesh. We will denote a mesh by $M$. It discretely approximates $\MM$. 
The mesh $M$ is represented by the vertices $v_k \in \RR^3$, $k=1,\dots,K$. 
For example, \Cref{fig: bunny_res} shows the various resolutions of the SB triangulations available from the Stanford 3D scanning repository. In the experiments that follow, we use the lowest resolution (see the left panel in \Cref{fig: bunny_res}) composed of 453 vertices and 908 triangles shown in the left panel of the top row in \Cref{fig: field_bunny_sphere}. This choice results in faster overall computation 
and less cluttered visualizations while serving as a proof-of-concept. Our methods remain valid irrespective of resolution.

\begin{figure}[t]
    \centering
    \includegraphics[width=\linewidth]{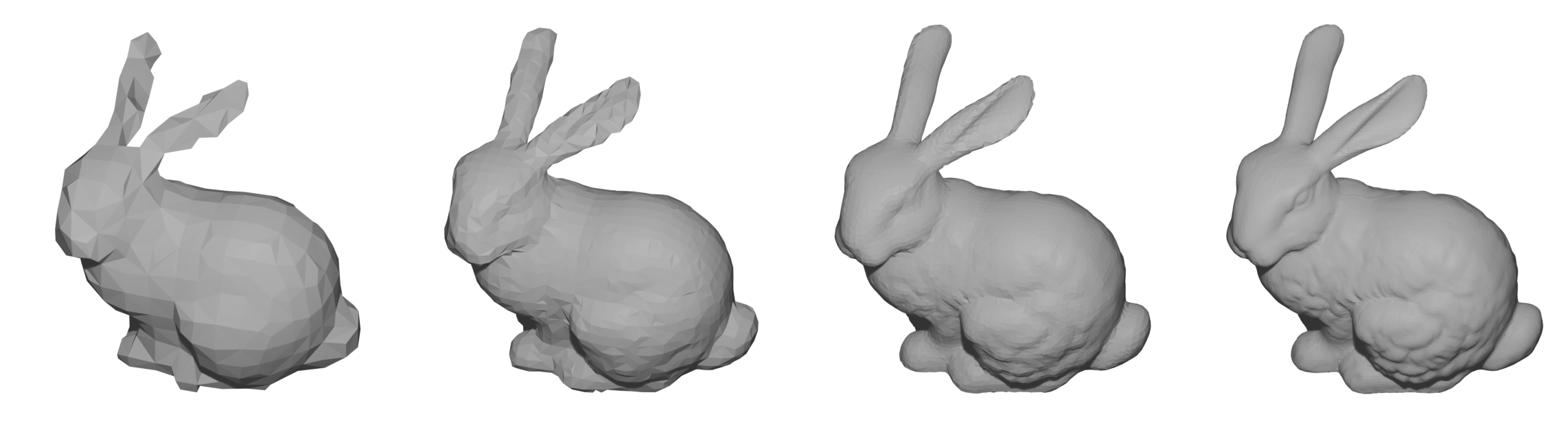}
    \caption{Various triangulation resolutions (left to right: lowest to highest) for the SB that are available from the \href{https://graphics.stanford.edu/data/3Dscanrep/}{Stanford 3D Scanning repository}. The number of vertices ranges from (left to right) 453 (908 triangles), 1,887 (3,768 triangles), 8,146 (16,214 triangles) and 34,834 (69,451 triangles).}
    \label{fig: bunny_res}
\end{figure}

The geometry is governed by piecewise-linear basis (hat) functions $\phi_k(v_{k'})=\delta_{kk'}$, where $\delta$ is the Kronecker's delta. In this setup, the \emph{cotangent Laplacian} $\Delta$ \citep[see e.g.,][Section 3.3]{meyer2003discrete}, is the discrete approximation of the Laplace-Beltrami operator $\Delta_g$. In practice, we truncate the spectrum (see \Cref{sec:trunc}). The approximate truncated spectrum, $\{\lambda_l,f_l\}_{l=0}^{T}$, where $\lambda_0 \leq \lambda_1\ldots\leq\lambda_T$, emerges from the eigen-problem 
$\<\Delta f_l,\phi_k\>=\lambda_l\<f_l,\phi_k\>$, for $k=1,\dots,K$, where 
\begin{equation}\label{eq: cot-laplacian}
(\Delta f_l)_k=\frac{1}{2}\sum_{j\sim k}(\cot  \alpha_j +\cot \beta_j)\,(f_l(v_j)-f_l(v_k)),
\end{equation}
is the \emph{cotangent Laplacian} of $f_l$ at $v_k$, where $j\sim k$ denotes the indices of the neighbors of vertex $v_k$; $\alpha_j$ and $\beta_j$ are angles opposite to the edge connecting $(v_j, v_k)$; and $f_l(v_j)$ and $\phi_k$ are $K \times 1$ vectors. \Cref{fig: bunny_eigen} shows $f_0, f_4, f_9$ and $f_{49}$ 
for the SB. The eigen-functions, $f_l$ need to be computed once prior to any Bayesian computation or model fitting is done hence, offering no interference with the overall computational complexity. 

\begin{figure}[t]
    \centering
    \includegraphics[width=\linewidth]{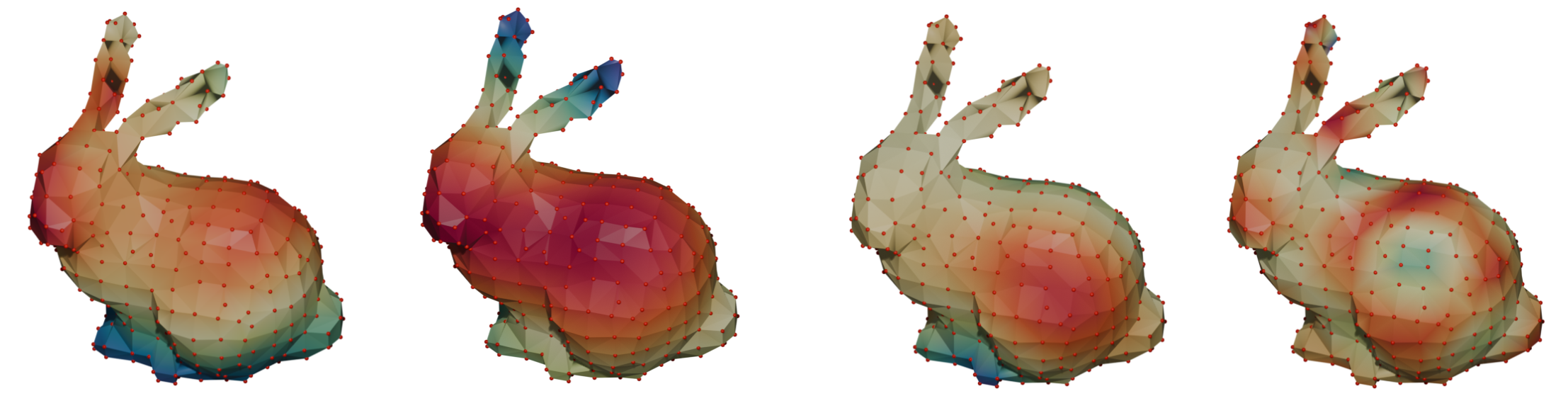}
    \caption{Eigen-functions of the cotangent Laplacian, (left to right) $f_0, f_4, f_{9}$ and $f_{49}$, for the SB.}
    \label{fig: bunny_eigen}
\end{figure}

\begin{figure}[t]
    \centering
    \includegraphics[width=\linewidth]{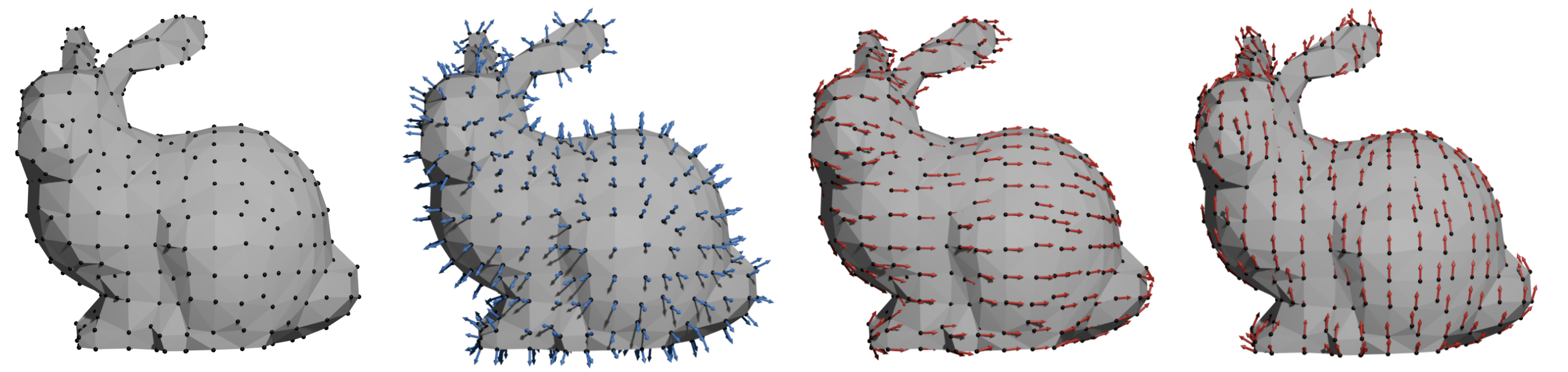}
    
    \vspace*{1.5em}
    
    \includegraphics[width=0.8\linewidth]{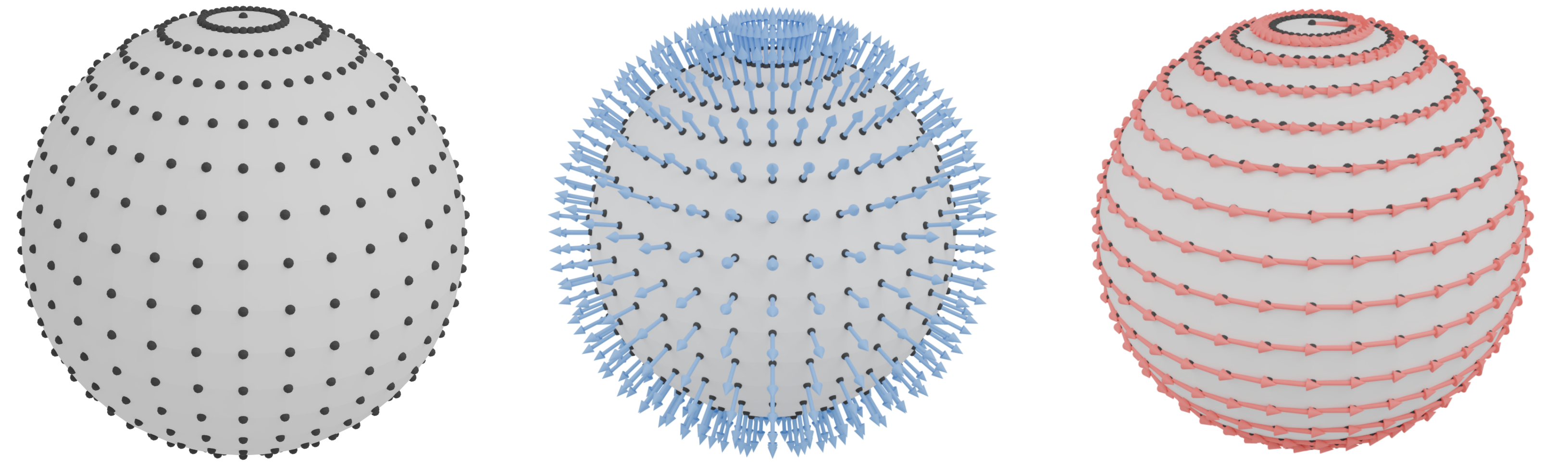}
    \caption{Plot showing (top row: left to right) the vertices, the normals, the vector fields generated using the $x$-axis and $y$-axis as reference vectors on the Stanford Bunny; (bottom row: left to right) the grid, the normals, $x=(x^1, x^2, x^3)$, and the rotational vector field, $(-x^2, x^1, 0)$ on $\mathbb{S}^2$. Vector fields at poles are consistent with the Hairy Ball Theorem.}
    \label{fig: field_bunny_sphere}
\end{figure}

 We construct a globally oriented smooth vector field by choosing a direction in the ambient space and projecting it onto the tangent plane. At each vertex $v_k$ of the triangulated mesh, $M\subset\RR^3$, let $n_k\in\RR^3$ denote the unit normal (shown in the second plot from left in the top and bottom rows of \Cref{fig: field_bunny_sphere}). We fix $e_1=(1,0,0)\T$ and let $\widetilde{V}^k = (I_3-n_kn_k\T)e_1=e_1-(e_1\T n_k)\,n_k$ and $V^k=\widetilde{V}^k||\widetilde{V}^k||^{-1}$. Note that $V^k\in T_{v_k}M\approx T_{v_k}\MM$ is a smooth unit vector field at the vertex $v_k$. The vector field, $V_{x_0}$, for an arbitrary location $x_0\in \MM$, is obtained using barycentric interpolation as discussed in \Cref{sec: barycentric_sampling_interpolation}. The vector $e_1$ can be replaced with any vector of choice in the ambient space. Using $e_1$ lets $D_VZ$ behave as (directional) derivatives along the local $x$-axis for the triangles, thus enabling ease of interpretation. The resulting vector field  over the SB is shown in \Cref{fig: field_bunny_sphere}, the top row, second plot from the right. The rightmost panel shows the same using $e_2=(0,1,0)\T$ resembling the local $y$-axis. 
Similarly to eigen-functions, the vector field must be constructed once and does not affect the computational complexity of posterior inference for $D_VZ$. We use $e_1$ as the reference vector in our experiments within \Cref{sec: bunny}. The plot on the right in the bottom row of \Cref{fig: field_bunny_sphere} shows the rotational vector field, $(-x^2, x^1,0)$, which lies in the tangent space, $T_x\mathbb{S}^2$ and features in our experiments within \Cref{sec: S2}.

\section{Simulation Experiments}\label{sec:experiments}
We show the results of simulation experiments conducted for the proposed inferential framework for differential processes on $\mathbb{S}^2$ and the SB. For both scenarios, we use smooth sinusoidal patterns as the truth. This demonstrates the ability of our inferential approach to learn arbitrary spatial patterns over complex domains that are not strictly from the model in \Cref{eq:hierarchical_model,eqn:posterior} and also provides a ground truth for comparing our posterior inference. We note that the kernels in \Cref{tab:S2_Kernels} and \Cref{eqn:Matern,eqn:RBF} involve infinite sums and require truncation in practice. This affects the smoothness of process realizations which is discussed in \Cref{sec:trunc}. For both scenarios, scattered data are simulated with $\tau^2=1$ in the manifold of choice, and the hierarchical model in \Cref{eq:hierarchical_model} is fitted to the simulated data. We only pursue inference on $D_VZ(x_0)\mid \widetilde{Z}$ following \Cref{eq:gp-grad,eq:posterior_predicitive}. The location $x_0$ should ideally lie on a grid \citep[see e.g.,][Section 5]{halder2024bayesian,halder2024bayesian_spt} for optimal inference on $D_VZ$. Using a grid is appropriate for manifolds with atlases, for example, $\mathbb{S}^2$. For general $\MM$, like SB, we use alternatives that are popular in the signal processing literature. We place emphasis on the careful choice of an appropriate vector field $V$, as it remains crucial for the physical interpretation of $D_VZ$. In the ensuing experiments, the fractal parameter for the kernels controlling the smoothness of the process is fixed by design at $\nu=2$, hence guaranteeing valid inference on derivatives. Posterior samples for $D_{V_{x_0}}Z(x_0)$ are obtained one for one using those for $\sigma^2$ and $\alpha$ generated from the model fit.

\subsection{Sphere, \texorpdfstring{$\mathbb{S}^2$}{S2}}\label{sec: S2}
\begin{figure}[t]
    \centering
    \includegraphics[width=\linewidth]{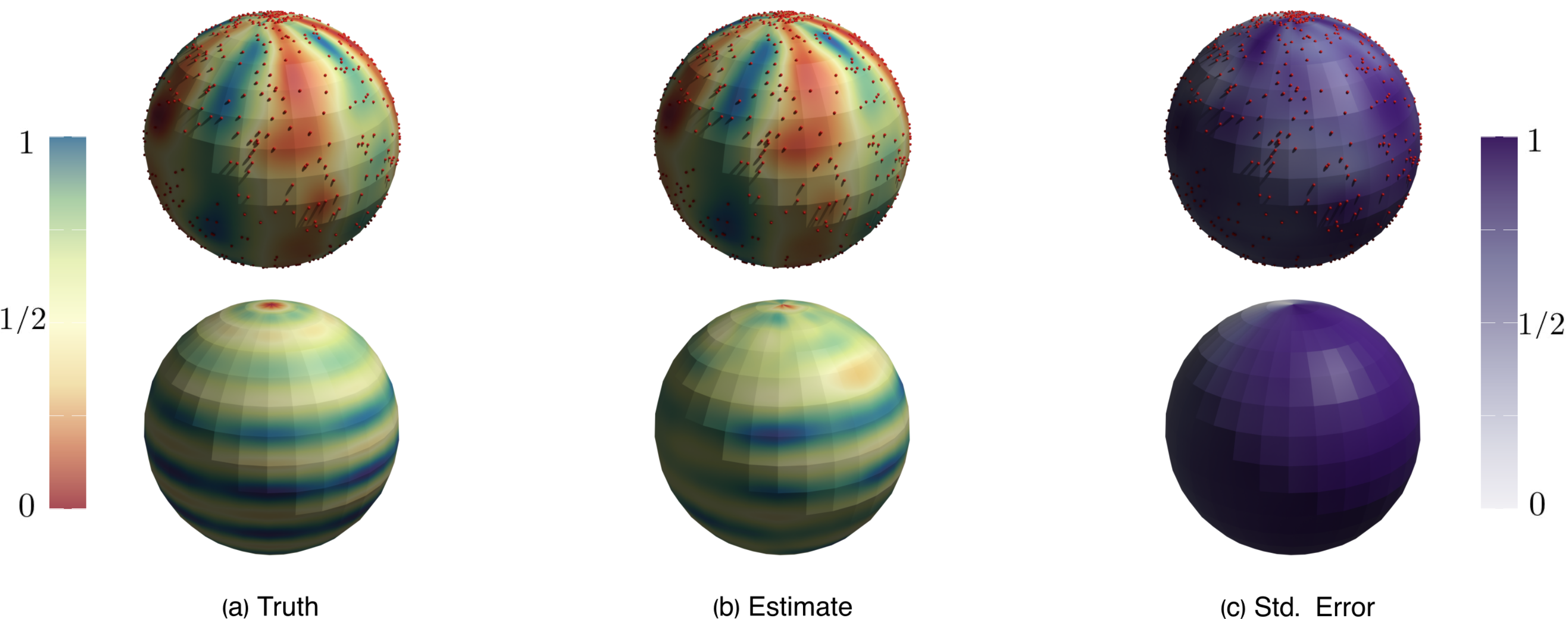}
    \caption{Plots showing interpolated surfaces for the sphere, $\mathbb{S}^2$: the true process (top a) estimated process (top b) and standard error (top c); true derivatives (bottom a)  estimated derivatives (bottom b) and standard error (bottom c).} 
    \label{fig:s2-process-grad-sim}
\end{figure}
We use $Y(x(\theta, \phi))\sim \mathcal{N}[\mu(x(\theta,\phi))=2(\sin(3\pi\theta) + \cos(3\pi\phi)), \tau^2]$ as the patterned truth, where $x(\theta,\phi)=(\sin(\phi)\cos(\theta),\sin(\phi)\sin(\theta),\cos(\phi))\in \mathbb{S}^2\subset\RR^3$ and $(\theta, \phi)$ are polar and azimuthal angles, respectively, that serve as local charts. Smooth interpolated surfaces are obtained at the local chart level using multilevel B-splines \citep[see e.g.,][]{finley2024mba}. \Cref{fig:s2-process-grad-sim} (top row, plot (a)) shows the true surface for $N=10^3$ randomly simulated locations, which are marked with red dots. We use the truncated Legendre-Mat\'ern covariance kernel 
truncated at $20$ eigen-pairs (terms with Legendre polynomials $P_1,\dots, P_{20}$, 
see \Cref{tab:S2_Kernels}). The estimated parameters accompanied by their 95\% credible intervals (CIs) are $\widehat{\tau}^2=1.60\;(1.44, 1.79)$, $\widehat{\sigma}^2=4.69\;(3.41,7.61)$ and $\widehat{\alpha}=13.41\;(8.93, 19.64)$. \Cref{fig:s2-process-grad-sim} (top row, plots (b) and (c)) show plots for the fitted GP and standard errors, respectively.


We use the smooth rotational vector field, $V_x = (-x^2,x^1,0)$ on $\mathbb{S}^2$. We have a closed-form expression for the ground truth. The geodesic starting from $(\theta,\phi)$ along $Z$ is $\gamma(t)=(\theta,\phi+t)$, where $\gamma(0)=(\theta,\phi)$ and $\gamma'(0) = \partial_\phi$. 
The derivative of the mean function is $D_V\mu(x(\theta, \phi)) = -6\pi \sin(3\pi \phi)$. Posterior inference on derivatives is sought over an equally spaced grid (constructed using local charts, i.e. spherical coordinates) spanning the sphere's surface. Further details regarding the derivation of the posterior using \Cref{eq:grad-posterior,eq:ZD_VZ_Sp,eqn:D_VZ_Sp} can be found in \Cref{sec:grad-posterior}. 
Interpolated surface plots comparing the truth against estimated derivatives are shown in the bottom row of \Cref{fig:s2-process-grad-sim}. Overall, we observed satisfactory quality of posterior inference for $D_VZ$, achieving 85.44\% coverage of the truth. 

\subsection{Stanford Bunny (SB)}\label{sec: bunny}
\begin{figure}[t]
    \centering
    \includegraphics[width=\linewidth]{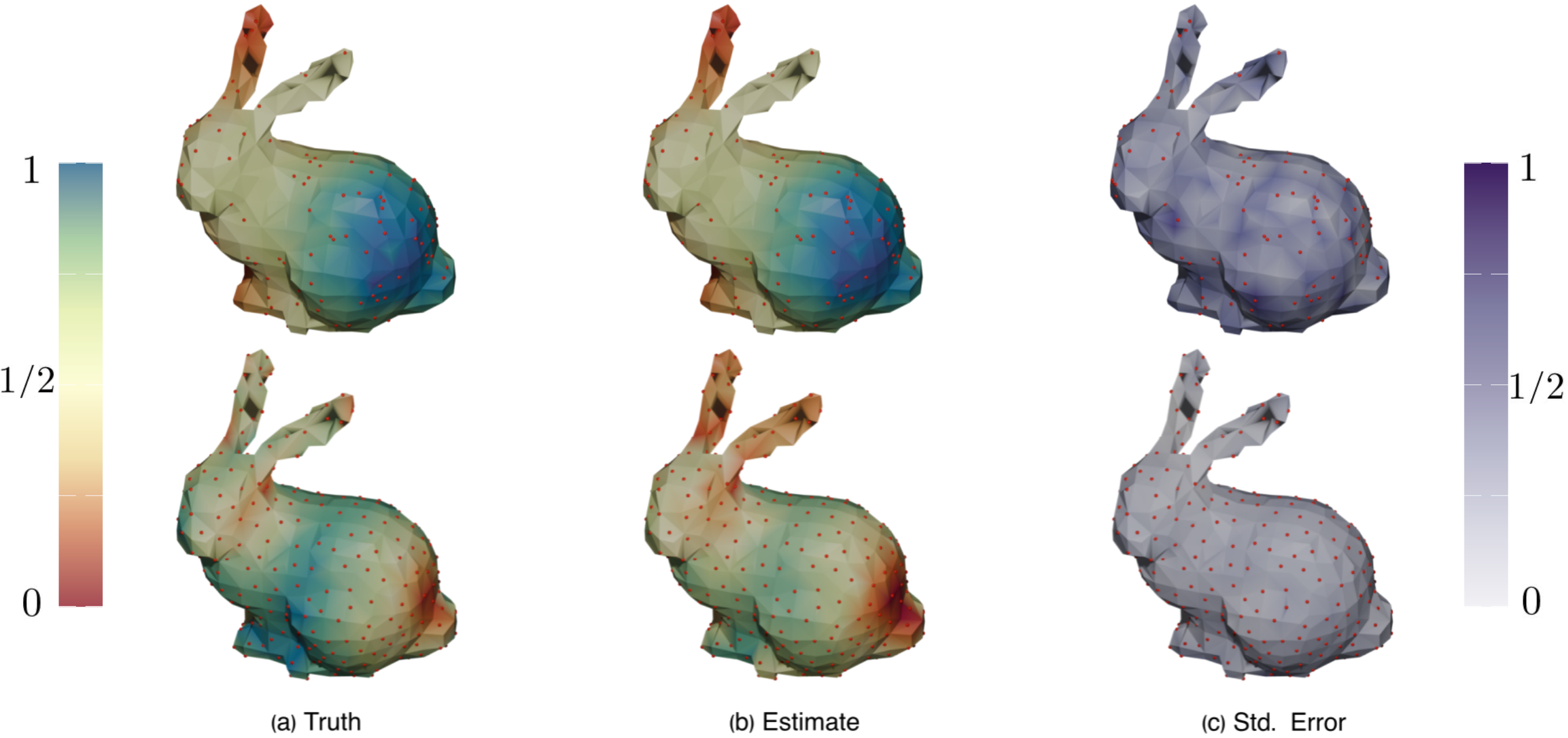}
    \caption{Plots showing interpolated surfaces for the SB: the true process (top a), estimated process (top b) and standard error (top c); true derivatives (bottom a); estimated derivatives (bottom b) and standard error (bottom c).} 
    \label{fig:bunny-grad-sim}
\end{figure}

For SB, irregularly spaced sample locations in $\MM_{\rm SB}$ are generated using barycentric sampling---for $x = (x^1, x^2, x^3)\in\MM_{\rm SB}$, where $\MM_{\rm SB}$ denotes the SB manifold. We simulate $N=10^3$ observations, $Y(x)\sim \mathcal{N}[\mu(x)=10(\sin(3\pi x^1) + \sin(3\pi x^2) +\sin(3\pi x^3)), \tau^2]$. Interpolated surface registration on $\MM_{\rm SB}$ is performed using surface splines. \Cref{fig:bunny-grad-sim} (top row (a)) shows the resulting plot generated from the realizations. The cotangent Laplacian in \Cref{eq: cot-laplacian} serves as a discrete approximation for the Laplace-Beltrami operator required for the Mat\'ern kernel in \Cref{eqn:Matern}. We use 200 eigen-pairs (out of 453, i.e. < 50\% of the spectrum) of the cotangent Laplacian to discretely represent the SB which are defined on the mesh vertices. Barycentric interpolation of the eigen-functions approximates them at the observed locations. 
The posterior parameter estimates with 95\% CI are $\widehat{\tau}^2=1.04\;(0.95, 1.15)$, $\widehat{\sigma}^2=4.44\;(3.27,5.60)$ and $\widehat{\alpha}=0.48\;(0.44, 0.53)$. The 95\% CI for $\tau^2$ contains the truth. \Cref{fig:bunny-grad-sim} (top row, plots (b) and (c)) shows the resulting model fit and standard error. 

\begin{figure}[t]
    \centering
    \includegraphics[scale = 0.26]{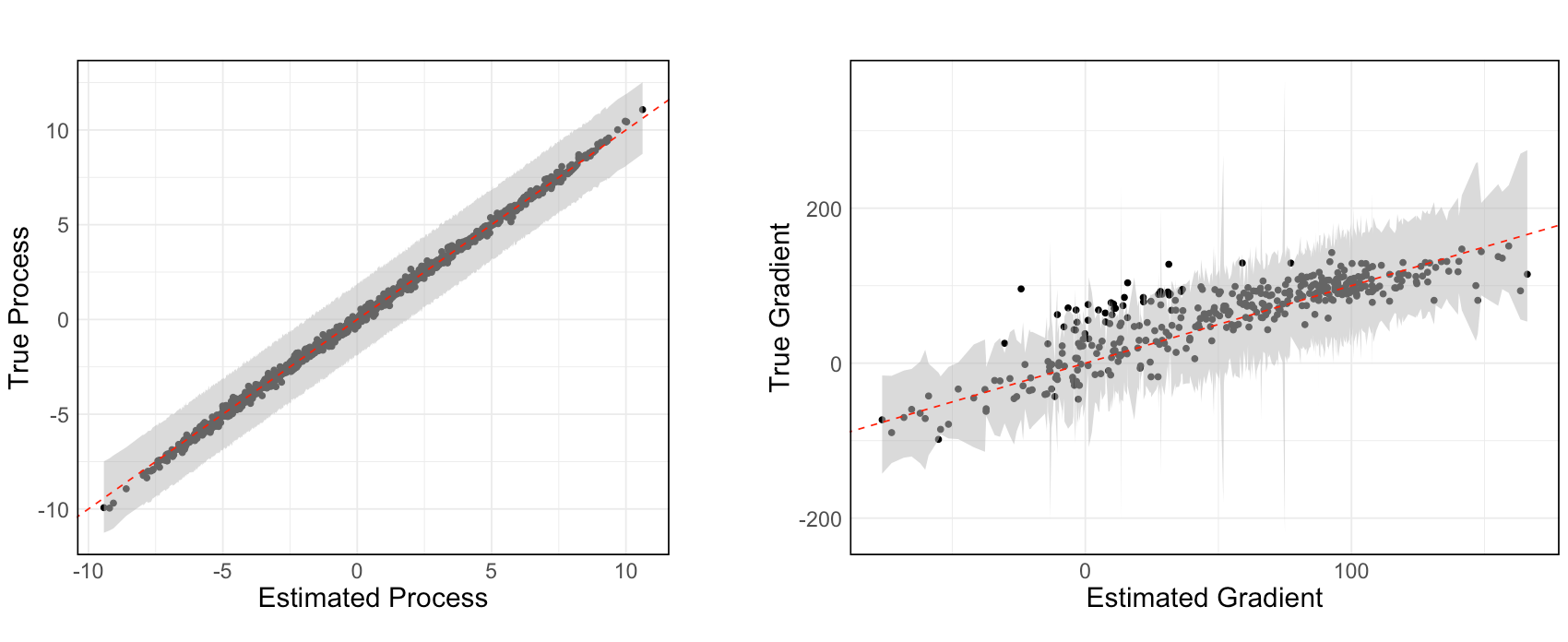}
    \caption{Scatter plots with 95\% credible bands comparing posterior inference against truth for the process (left) and derivatives (right) on the Stanford Bunny.} 
    \label{fig: bunny-true-v-est}
\end{figure}

We use \emph{farthest point sampling} (see \Cref{sec: farthest_point_sampling}) to generate a point cloud, which is shown using red dots in the bottom row of \Cref{fig:bunny-grad-sim} that closely resembles a grid over the SB surface. A smooth unit vector field, $V_{v_0} \in T_{v_0}\MM_{\rm SB}$, is obtained by projecting per face mesh normals onto the tangent space using a reference direction (e.g., the $x$-axis) 
at each vertex, $v_0$. Subsequently, they are interpolated at grid locations $x_0$. For each triangle, we use $\gamma(t)=x_0+tV_{x_0}$ as the linear approximation to a geodesic starting from $x_0$ along $Z$, $\gamma(0)=x_0$ and $\gamma'(0) = V_{x_0}$. Hence, the true derivatives are $D_V\mu(x_0)=\<\partial_{x_0}\mu(x_0), V_{x_0}\>$, where $\partial_{x}$ is the coordinate-wise differential operator. The plots in the bottom row of \Cref{fig:bunny-grad-sim} compare $D_VZ(x)$ to the truth. We achieved a coverage probability of 93\% for the true values. Increasing the number of eigen pairs from 200 to 300 ($\approx$66\% of the spectrum) increased the coverage probability to 95.25\%. \Cref{fig: bunny-true-v-est} shows the overall quality of statistical learning for the process and $D_VZ$. As expected, posterior inference for $D_VZ$ exhibits greater variability than inference for $Z$, reflecting both the increased roughness of derivative sample paths and the effects of discretization inherent to polyhedral meshes. Nevertheless, the Bayes estimate for derivatives closely follows the truth, with no evidence of systematic bias or sign reversals. The process is recovered nearly exactly, and the recovery of intrinsic (directional) derivatives is consistent with uncertainty that reflects geometry and stochastic regularity. 

\section{Discussion}\label{sec: discussion}
This work develops a rigorous inferential framework for rates of change within a spatial random field defined over a compact Riemannian manifold. Building on the spectrum of the Laplace–Beltrami operator, we derive conditions for mean-square continuity and first- and second order mean-square differentiability of GPs that depend on the smoothness parameter and manifold dimension. We formalize derivative and curvature processes along smooth vector fields, obtaining explicit cross-covariance structures for the joint process comprising the latent field, its directional derivatives and curvatures. Leveraging a Bayesian hierarchical framework enables model based posterior predictive inference on differential processes at irregularly spaced locations. Specifically, for isotropic kernels in spheres, we provide closed-form characterizations while systematically classifying the mean-square smoothness of several widely used covariance functions in $\mathbb{S}^2$. Finally, we perform statistical inference on process smoothness over a commonly used manifold with more complex geometry, the Stanford Bunny. Our simulation experiments 
illustrate that the proposed methodology delivers accurate estimation and uncertainty quantification for directional derivatives 
of noisy and partially observed processes 
on a manifold. For the interested reader, \Cref{sec: computation_and_code} provides additional computational resources that reproduce our experiments.


Several extensions of this work are of interest. We pursue theoretical developments on compact Riemannian manifolds, where the Laplace–Beltrami spectrum is discrete and hence the kernels admit clean eigen-expansions; extending the theory of mean-square smoothness to non-compact manifolds, where the spectrum may be continuous and kernel constructions are more involved, is an interesting open direction. Working with manifolds featuring complicated intrinsic geometry that are available for practical use as polyhedral meshes, such as the Stanford Bunny or point clouds in an ambient environment, we approximate the Laplace–Beltrami operator using the cotangent Laplacian. 
Theoretical investigations into the loss in translation between eigen pairs and functions of the cotangent Laplacian and the continuum are required. Furthermore, working with scattered data, we use barycentric interpolation and differentiation to approximate eigen-functions that are defined on mesh vertices at the observed locations. This leads to bias and variance in the posterior inference for the proposed differential processes which need further investigation. 
Practical implementations on manifolds inherently rely on truncating the infinite eigen expansions of Mat\'ern-type kernels and, while we show that finite truncation tends to overly smooth the process in the mean-squared sense, the precise impact of truncation on the distribution of derivative and curvature processes and the quality of posterior inference for rates of change require further investigation; this includes establishing error bounds for derivative covariance approximations and principled selection of the truncation level. 


\appendix 
\section{Proofs for Section \ref{sec:cont}}\label{apdx:proof_sec:cont}
In this section, we present the proofs of \Cref{thm:MSC} and \Cref{prop:MSC_iso}.
\begin{proof}[Proof of \Cref{thm:MSC}]
Let $x\in\MM$ and $\gamma:(-\delta,\delta)\to \MM$ be a smooth curve with $\gamma(0)=x$. 
\begin{align*}
&\EE(Z(\gamma(t))-Z(x))^2 = \EE \left(Z(\gamma(t))Z(\gamma(t))-2Z(\gamma(t))Z(\gamma(0))+Z(\gamma(0))Z(\gamma(0))\right)\\
& = K(\gamma(t),\gamma(t))-2K(\gamma(t),\gamma(0))+K(\gamma(0),\gamma(0))\\
& = \frac{\sigma^2}{C_{\nu,\alpha}}\sum_{l=0}^\infty (\alpha^2+\lambda_l)^{-\nu-\frac{p}{2}}\left(f_l(\gamma(t))f_l(\gamma(t))-2f_l(\gamma(t))f_l(\gamma(0))+f_l(\gamma(0))f_l(\gamma(0))\right)\\
& = \frac{\sigma^2}{C_{\nu,\alpha}}\sum_{l=0}^\infty (\alpha^2+\lambda_l)^{-\nu-\frac{p}{2}}\left(f_l(\gamma(t))-f_l(\gamma(0))\right)^2 \eqqcolon \frac{\sigma^2}{C_{\nu,\alpha}}\sum_{l=0}^\infty\xi_l(t). 
\end{align*}
To switch the limit $\lim\limits_{t\to0}$ and the sum $\sum_{l=0}^\infty$, we need the uniform convergence of the series $\sum_{l=0}^\infty\xi_l(t)$. Observe that $\|f_l\|_\infty \leq C\lambda_l^{\frac{p-1}{4}}\|f_l\|_2=C\lambda_l^{\frac{p-1}{4}}$, where $C>0$ is a constant \citep[see][]{donnelly2001bounds}. Then let $a_l \coloneqq4C^2(\alpha^2+\lambda_l)^{-\nu-\frac{p}{2}}\lambda_l^{\frac{p-1}{2}}$, so $|\xi_l(t)|\leq a_l$. Then by Weistrass M-test, it suffices to show $\sum_{l=0}^\infty a_l$ converges. By Weyl's law, $\lambda_l 	\asymp l^{2/p}$. So, when $\nu>0$
$$\sum_{l=0}^\infty a_l\asymp \sum_{l=0}^\infty (\alpha^2+l^{2/p})^{-\nu-p/2}l^{\frac{p-1}{p}}\asymp \sum_{l=0}^\infty l^{-\frac{2\nu}{p}-1/p}<\infty.$$ 
As a result, when $\nu>\frac{p-1}{2}$,
\begin{align*}
\lim\limits_{t \to 0}\EE(Z(\gamma(t))-Z(x))^2 & = \sum_{l=0}^\infty (\alpha^2+l^{2/p})^{-\nu-p/2}l^{\frac{p-1}{p}}\lim\limits_{t\to0}\left(f_l(\gamma(t))-f_l(\gamma(0))\right)^2= 0,
\end{align*}
by the continuity of $f_l$ and $\gamma$. The proof for RBF is obtained by replacing $(\alpha^2+\lambda_l)^{-\nu-\frac{p}{2}}$ with $e^{-\frac{\lambda_l}{2\alpha^2}}$.
\end{proof}
\begin{proof}[Proof of \Cref{prop:MSC_iso}]
    Let $\gamma(t) = \exp_x(tv)$ where $v\in T_x\MM$ and $t\in[0,\delta)$ for some $\delta>0$ such that $\exp$ is a local diffeomorphism, then 
    \begin{align*}
    \EE(Z(\gamma(t))-Z(x))^2 &= \EE(Z(\exp_x(tv))^2-2Z(\exp_x(tv))Z(x)+Z(x)^2)\\
    & = 2\left(K(0)-K(t)\right),
    \end{align*}
    since $d_\MM(\exp_x(tv),x) = t$. As a result, $\lim_{t\to0}\EE(Z(\gamma(t))-Z(x))^2$ exists if and only if $K$ is continuous at $0$. 
\end{proof}

\section{Proofs for Section \ref{sec:deriv}}
In this section, we present proofs of \Cref{thm:1MSD}, \Cref{lem:Taylor_1}, \Cref{lem:D_VZ_cov}, \Cref{lem:ZD_VZ}, \Cref{cly:ZD_VZMat} and \Cref{prop:MSD_iso}.

\begin{proof}[Proof of \Cref{thm:1MSD}]
Let $x\in\MM$ and $\gamma:(-\delta,\delta)\to \MM$ be a smooth curve with $\gamma(0)=x$. Similarly, we have 
\begin{align*}
\EE\left(\frac{Z(\gamma(t))-Z(x)}{t}\right)^2 & = \frac{\sigma^2}{C_{\nu,\alpha}}\sum_{l=0}^\infty (\alpha^2+\lambda_l)^{-\nu-\frac{p}{2}}\frac{\left(f_l(\gamma(t))-f_l(\gamma(0))\right)^2}{t^2}\eqqcolon \frac{\sigma^2}{C_{\nu,\alpha}}\sum_{l=0}^\infty\eta_l(t).
\end{align*}
To switch the limit $\lim\limits_{t\to0}$ and the sum $\sum_{l=0}^\infty$, we need the uniform convergence of the series $\sum_{l=0}^\infty\eta_l(t)$. Observe that $\lim\limits_{t\to0} \frac{|f_l(\gamma(t))-f_l(\gamma(0))|}{t^2}=|(f_l\circ\gamma)'(0)|\leq C\|\nabla f_l\|_\infty$ where $C>0$ is a constant. By \cite{shi2010gradient,arnaudon2020gradient}, we have
$$\|\nabla f_l\|_\infty \leq C\sqrt{\lambda_l}\|f_l\|_\infty \leq C \sqrt{\lambda_l}\lambda_l^{\frac{p-1}{4}}\|f_l\|_2=C\lambda_l^{\frac{p+1}{4}}.$$

Let $b_l \coloneqq(\alpha^2+\lambda_l)^{-\nu-\frac{p}{2}}\lambda_l^{\frac{p+1}{2}}$. Then $|\eta_l(t)|\leq 4C^2b_l$. Similarly, when $\nu>\frac{p+1}{2}$
$$\sum_{l=0}^\infty b_l\asymp \sum_{l=0}^\infty (\alpha^2+l^{2/p})^{-\nu-p/2}l^{\frac{p+1}{p}}\asymp \sum_{l=0}^\infty l^{-\frac{2\nu-1}{p}}<\infty.$$ 
As a result, when $\nu>\frac{p+1}{2}$,
\begin{align*}
\lim\limits_{t \to 0}\EE\left(\frac{Z(\gamma(t))-Z(x)}{t}\right)^2 & = \sum_{l=0}^\infty (\alpha^2+l^{2/p})^{-\nu-p/2}l^{\frac{p-1}{p}}\lim\limits_{t\to0}\frac{\left(f_l(\gamma(t))-f_l(\gamma(0))\right)^2}{t^2}\\
& = \sum_{l=0}^\infty (\alpha^2+l^{2/p})^{-\nu-p/2}l^{\frac{p-1}{p}}(f_l\circ\gamma)'(0)^2
\end{align*}
by the smoothness of of $f_l$ and $\gamma$. The proof for RBF is obtained by replacing $(\alpha^2+\lambda_l)^{-\nu-\frac{p}{2}}$ by $e^{-\frac{\lambda_l}{2\alpha^2}}$.
\end{proof}
\begin{proof}[Proof of \Cref{lem:Taylor_1}]
Note that
\begin{align*}
&\lim\limits_{t\to 0}\EE\left(\frac{Z(\exp_x(tv)) - Z(x) - t \<v,\nabla Z(x)\>}{t}\right)^2 \\
& = \lim\limits_{t\to 0}\EE\left(\frac{Z(\exp_x(tv)) - Z(x) - t\lim\limits_{s\to 0} \frac{Z(\gamma(s))-Z(x)}{s}}{t}\right)^2\\
& = \lim\limits_{t\to 0} \sum_{l=0}^\infty t^2 (\alpha^2+\lambda_l)^{-\nu-p/2}\left[f_l(\gamma(t))f_l(\gamma(t))-2f_l(\gamma(t))f_l(x)+f_l(x)f_l(x)+\right.\\&\pushright{\left.t^2(f_l\circ\gamma)'(0)(f_l\circ\gamma)'(0)-2tf_l(\gamma(t))(f_l\circ\gamma)'(0)+2tf_l(x)(f_l\circ\gamma)'(0)\right]}\\
& = \sum_{l=0}^\infty (\alpha^2+\lambda_l)^{-\nu-p/2}\left((f_l\circ \gamma)'(0)+(f_l\circ \gamma)'(0)-2(f_l\circ \gamma)'(0)\right) =0.
\end{align*}
\end{proof}
\begin{proof}[Proof of \Cref{lem:D_VZ_cov}]
By definition, the mean function of $D_VZ$ is
\begin{align*}
\EE(D_VZ(x))&=\EE\left(\lim_{t\to0} \frac{Z(\exp_x(tV_x))-Z(x)}{t}\right)\\
& = \lim_{t\to0} \frac{\mu(\exp_x(tV_x))-\mu(x)}{t}= \<\nabla \mu(x),V_x\>.
\end{align*}

For the covariance of $D_VZ$, by definition,
\begin{align*}
&K_V(x,x')=\cov(D_VZ(x),D_VZ(x'))\\
& = \cov\left(\lim_{t\to 0}\frac{Z(\exp_x(tV_x))-Z(x)}{t},\lim_{s\to 0}\frac{Z(\exp_{x'}(sV_{x'}))-Z(x')}{s}\right)\\
& = \lim_{t,s\to0} \frac{1}{ts}[K(\exp_x(tV_x),\exp_{x'}(sV_{x'}))-K(x,\exp_{x'}(sV_{x'}))\\
&\pushright{-K(\exp_x(tV_x),x')+K(x,x')]}\\
& = \lim_{s\to 0}\frac{1}{s}\left[\<\nabla_1 K(x,\exp_{x'}(sV_{x'})),V_{x}\>-\<\nabla_1 K(x,x'),V_x\>\right]\\ & = (\nabla_{12} K(x,x'))(V_x,V_{x'}).
\end{align*}
\end{proof}
\begin{proof}[Proof of \Cref{lem:ZD_VZ}]
By definition,
\begin{align*}
\cov(Z(x),D_VZ(x'))& = \cov\left(Z(x),\lim_{s\to 0}\frac{Z(\exp_{x'}(sV_{x'}))-Z(x')}{s}\right)\\
& = \lim_{s\to0} \frac{1}{s}\left[K(x,\exp_{x'}(sV_{x'}))-K(x,x')\right] = \nabla_2 K(x,x')(V_{x'}).
\end{align*}
Proceeding in a similar fashion we have, $\cov(D_VZ(x),Z(x'))= \nabla_2 K(x,x')(V_x)$.
\end{proof}
\begin{proof}[Proof of \Cref{cly:ZD_VZMat}]
 We focus on the Mat\'ern kernel since the proof for RBF is similar. For $K_VZ$, we plug in the series representation of $K$ into the third equation in the proof of Theorem \ref{lem:D_VZ_cov}:
\begin{align*}
    &K_V(x,x')=\lim_{t,s\to0} \frac{1}{ts}\left[K(\exp_x(tV_x),\exp_{x'}(sV_{x'}))-K(\exp_x(tV_x),x')\right.\\&\pushright{\left.-K(x,\exp_{x'}(sV_{x'}))+K(x,x')\right]}\\
    & = \frac{\sigma^2}{C_{\nu,\alpha}}\lim_{t,s\to0} \frac{1}{ts}\sum_{l=0}^\infty (\alpha^2+\lambda_l)^{-\nu-\frac{p}{2}}\left[f_l(\exp_x(tV_x))f_l(\exp_{x'}(sV_{x'}))-f_l(\exp_x(tV_x))f_l(x')\right.\\&\hspace{6cm}\left.-f_l(x)f_l(\exp_{x'}(sV_{x'}))+f_l(x)f_l(x')\right]\\
    & = \frac{\sigma^2}{C_{\nu,\alpha}}\lim_{t\to0} \frac{1}{t}\sum_{l=0}^\infty (\alpha^2+\lambda_l)^{-\nu-\frac{p}{2}}\left[f_l(\exp_x(tV_x))\nabla f_l(V_{x'})-f_{l}(x)\nabla f_l(V-{x'})\right]\\
    & = \frac{\sigma^2}{C_{\nu,\alpha}}\sum_{l=0}^\infty (\alpha^2+\lambda_l)^{-\nu-\frac{p}{2}}\nabla f_l(V_x)\nabla f_l(V_{x'}).
\end{align*}
For $\cov(Z(x),D_VZ(x'))$, we plug in the series representation of $K$ into \Cref{eqn:ZD_VZ}:
\begin{align*}
    \cov(Z(x),D_VZ(x'))&=\lim_{s\to0} \frac{1}{s}\left[K(x,\exp_{x'}(sV_{x'}))-K(x,x')\right]\\
    & = \frac{\sigma^2}{C_{\nu,\alpha}}\lim_{s\to0} \frac{1}{s}\sum_{l=0}^\infty (\alpha^2+\lambda_l)^{-\nu-\frac{p}{2}}f_l(x)\left[f_l(\exp_{x'}(sV_{x'}))-f_l(x')\right]\\
    & =  \frac{\sigma^2}{C_{\nu,\alpha}}\sum_{l=0}^\infty (\alpha^2+\lambda_l)^{-\nu-\frac{p}{2}}f_l(x)\nabla f_l(V_{x'}).
\end{align*}
\end{proof}
\begin{proof}[Proof of \Cref{prop:MSD_iso}]
 Let $\gamma(t) = \exp_x(tv)$ where $v\in T_x\MM$ and $t\in[0,\delta)$ for some $\delta>0$ such that $\exp$ is local diffeomorphism, then similar to the proof of \Cref{prop:MSC_iso}, we have
    \begin{align*}
    \EE(Z(\gamma(t))-Z(x))^2=2K(0)-2K(t),
    \end{align*}
    As a result, $\lim_{t\to0}\frac{\EE(Z(\gamma(t))-Z(x))^2}{t^2}$ exists if and only if $K(t)=K(0)+O(t^2)$, or equivalently, $K'(0)=0$ and $K''(0)<\infty$.
\end{proof}

\section{Proofs for Section \ref{sec:curv}}\label{apdx:sec:curv}
In this section, we present proofs of \Cref{thm:2MSD}, \Cref{lem:D_UVZ_cov}, \Cref{lem:Z_D_UVZ;D_VZD_UVZ}, \Cref{cly:D_UVZMat}, \Cref{thm:joint}, and \Cref{prop:2-MSD}.

\begin{proof}[Proof of \Cref{thm:2MSD}]
Let $x\in\MM$ and $\gamma:(-\delta,\delta)\to \MM$ be a smooth curve with $\gamma(0)=x$. We have 
\begin{align*}
&\EE\left(\frac{D_VZ(\gamma(t))-D_VZ(x)}{t}\right)^2   = \frac{1}{t^2} \left[K_V(\gamma(t),\gamma(t))-2K_V(\gamma(t),x)+K_V(x,x)\right]\\
& = \frac{\sigma^2}{C_{\nu,\alpha}}\sum_{l=0}^\infty (\alpha^2+\lambda_l)^{-\nu-\frac{p}{2}}\frac{\left(\nabla f_l(V_{\gamma(t)})-\nabla f_l(V_x)\right)^2}{t^2}\eqqcolon \frac{\sigma^2}{C_{\nu,\alpha}}\sum_{l=0}^\infty\zeta_l(t).
\end{align*}
To switch the limit, $\lim\limits_{t\to0}$ and the sum, $\sum_{l=0}^\infty$, we need the series, $\sum_{l=0}^\infty\zeta_l(t)$ to be uniformly convergent. Observe that, $\lim\limits_{t\to0} \frac{|\nabla f_l(V_{\gamma(t)})-\nabla f_l(V_{\gamma(0)})|}{t}\leq C\|\nabla^2 f_l\|_\infty$, where $C>0$ is a constant. By \cite{cheng2024hessian}, we have
$$\|\nabla^2 f_l\|_\infty \leq C \lambda_l\|f_l\|_\infty \leq C \lambda_l\lambda_l^{\frac{p-1}{4}}\|f_l\|_2=C\lambda_l^{\frac{p+3}{4}}.$$

Let $c_l \coloneqq(\alpha^2+\lambda_l)^{-\nu-\frac{p}{2}}\lambda_l^{\frac{p+3}{2}}$. Then $|\zeta_l(t)|\leq 4C^2c_l$. Similarly, when $\nu>\frac{p+3}{2}$
$$\sum_{l=0}^\infty a_l\asymp \sum_{l=0}^\infty (\alpha^2+l^{2/p})^{-\nu-p/2}l^{\frac{p+3}{p}}\asymp \sum_{l=0}^\infty l^{-\frac{2\nu-3}{p}}<\infty.$$ 
As a result, when $\nu>\frac{p+3}{2}$,
\begin{align*}
\lim\limits_{t \to 0}\EE\left(\frac{D_VZ(\gamma(t))-D_VZ(x)}{t}\right)^2 & = \sum_{l=0}^\infty (\alpha^2+\lambda_l)^{-\nu-p/2}\lim\limits_{t\to0}\frac{\left(\nabla f_l(V_{\gamma(t)})-\nabla f_l(V_{\gamma(0)})\right)^2}{t^2}\\
& = \sum_{l=0}^\infty (\alpha^2+\lambda_l)^{-\nu-p/2}\nabla^2 f_l(V_x,\gamma'(x)). 
\end{align*}

The proof for RBF is obtained by replacing $(\alpha^2+\lambda_l)^{-\nu-\frac{p}{2}}$ by $e^{-\frac{\lambda_l}{2\alpha^2}}$.
\end{proof}

\begin{proof}[Proof of \Cref{lem:D_UVZ_cov}]
For the mean function, we have
\begin{align*}
\EE(D^2_{U,V}Z(x)) & =\EE\left(\lim_{t\to0}\frac{D_VZ(\exp_x(tU(x)))-D_VZ(x)}{t}\right) \\
& = \lim_{t\to0} \frac{\<\nabla \mu(\exp_x(tU(x))),V_{\exp_x(tU(x))}\>-\<\nabla \mu(x),V_x\>}{t}\\
& = \nabla^2 \mu(x)(V_x,U_x).
\end{align*}
For the covariance of $D^2_{U,V}Z$, by definition,
\begin{align*}
&K_{U,V}(x,x')=\cov(D_{U,V}^2Z(x),D_{U,V}^2Z(x'))\\
& = \cov\left(\lim_{t\to 0}\frac{D_V(\exp_x(tU_x)-D_VZ(x)}{t},\lim_{s\to 0}\frac{D_VZ(\exp_{x'}(sU_{x'}))-D_VZ(x')}{s}\right)\\
& = \lim_{t,s\to0} \frac{1}{ts}[K_V(\exp_x(tU_x),\exp_{x'}(sU_{x'}))-K_V(\exp_x(tU_x),x')\\
&\hspace{5cm}-K_V(x,\exp_{x'}(sU_{x'}))+K_V(x,x')]\\
& = \lim_{t\to 0}\frac{1}{t}\left[\<\nabla_2 K_V(\exp_x(tU_x),x'),U_{x'}\>)-\<\nabla_2 K_V(x,x'),U_{x'}\>\right]\\
&= (\nabla_{12} K_V(x,x'))(U_x,U_{x'}).
\end{align*}
\end{proof}

\begin{proof}[Proof of \Cref{lem:Z_D_UVZ;D_VZD_UVZ}]
We first calculate $\cov(Z(x),D^2_{U,V}Z(x')$. By definition,
\begin{align*}
&\cov(Z(x),D^2_{U,V}Z(x')) = \cov\left(Z(x),\lim_{s\to 0}\frac{D_VZ(\exp_{x'}(sU_{x'}))-D_VZ(x')}{s}\right)\\
& = \lim_{s\to0} \frac{1}{s}\left[\<\nabla_2 K(x,\exp_{x'}(sU_{x'})),V_{\exp_{x'}(sU_{x'}))}\>-\<\nabla_2K(x,x'),V_{x'}\>\right]\\
& = \nabla_{22} K(x,x')(V_{x'},U_{x'}).
\end{align*}
Then we calculate $\cov(D_VZ(x),D_{U,V}^2Z(x'))$.
\begin{align*}
&\cov(D_VZ(x),D_{U,V}^2Z(x')) = \cov\left(D_VZ(x),\lim_{s\to 0}\frac{D_VZ(\exp_{x'}(sU_{x'}))-D_VZ(x')}{s}\right)\\
& = \lim_{s\to0} \frac{1}{s}\left[(\nabla_{12} K(x,\exp_{x'}(sU_{x'}))(V_x,V_{\exp_{x'}(sU_{x'})})-(\nabla_{12}K(x,x'))(V_x,V_{x'})\right]\\
& = \nabla_{122} K(x,x')(V_{x},V_{x'},U_{x'}).
\end{align*}
\end{proof}

\begin{proof}[Proof of \Cref{cly:D_UVZMat}]
We focus on the Mat\'ern kernel. The proof for RBF follows by replacing $(\alpha^2+\lambda_l)^{-\nu-\frac{p}{2}}$ by $e^{-\frac{\lambda_l^2}{2\alpha^2}}$.
    For $K_{U,V}$, we plug in the series representation of $K$ into \Cref{eqn:D_VZD_VZ}:
    \begin{align*}
        &K_{U,V}(x,x')=\lim_{t,s\to0} \frac{1}{ts}\left[K_V(\exp_x(tU_x),\exp_{x'}(sU_{x'}))-K_V(\exp_x(tU_x),x')\right.\\&\pushright{\left.-K_V(x,\exp_{x'}(sU_{x'}))+K_V(x,x')\right]}\\
& = \frac{\sigma^2}{C_{\nu,\alpha}}\lim_{t,s\to0} \frac{1}{ts}\sum_{l=0}^\infty (\alpha^2+\lambda_l)^{-\nu-\frac{p}{2}}\left[\nabla f_l(V_{\exp_x(tU_x)})\nabla f_l(V_{\exp_{x'}(sU_{x'})})\right.\\
&\pushright{\left.-\nabla f_l(V_{\exp_x(tU_x)})\nabla f_l(V_{x'})-\nabla f_l(V_x)\nabla f_l(V_{\exp_{x'}(sU_{x'})})+\nabla f_l(V_x)\nabla f_l(V_{x'})\right]}\\
& = \frac{\sigma^2}{C_{\nu,\alpha}}\lim_{t\to0} \frac{1}{t}\sum_{l=0}^\infty (\alpha^2+\lambda_l)^{-\nu-\frac{p}{2}}\left[\nabla f_l(V_{\exp_x(tU_x)})\nabla^2 f_l(U_{x'},V_{x'})\right.\\
& \pushright{\left.-\nabla f_{l}(V_x)\nabla^2 f_l(U_{x'},V_{x'})\right]}\\
& = \frac{\sigma^2}{C_{\nu,\alpha}}\sum_{l=0}^\infty (\alpha^2+\lambda_l)^{-\nu-\frac{p}{2}}\nabla f_l^2(V_x,U_x)\nabla^2 f_l(V_{x'},U_{x'}).
    \end{align*}

    For $\cov(Z,D_VZ)$, we plug in the series representation of $K$ into the \Cref{eqn:ZD_UVZ}:
    \begin{align*}
        &\cov(Z(x),D_{U,V}^2Z(x'))=\lim_{s\to0} \frac{1}{s}\left[\cov(Z(x),D_VZ(\exp_{x'}(sU_{x'})))\right.\\
        &\pushright{\left.-\cov(Z(x),D_VZ(x'))\right]}\\
        & = \frac{\sigma^2}{C_{\nu,\alpha}}\lim_{s\to0} \frac{1}{s}\sum_{l=0}^\infty (\alpha^2+\lambda_l)^{-\nu-\frac{p}{2}}f_l(x)\left[\nabla f_l(V_{\exp_{x'}(sU_{x'})})-\nabla f_l(V_{x'})\right]\\
        & =  \frac{\sigma^2}{C_{\nu,\alpha}}\sum_{l=0}^\infty (\alpha^2+\lambda_l)^{-\nu-\frac{p}{2}}f_l(x)\nabla^2 f_l(V_{x'},U_{x'}).
    \end{align*}

        For $\cov(D_VZ,D^2_{U,V}Z)$, we plug in the series representation of $K$ into \Cref{eqn:D_VZD_UVZ}:
    \begin{align*}
        &\cov(D_VZ(x),D_{U,V}^2Z(x'))=\lim_{s\to0} \frac{1}{s}\left[\cov(D_VZ(x),D_VZ(\exp_{x'}(sU_{x'})))\right.\\
        &\pushright{\left.-\cov(D_VZ(x),D_VZ(x'))\right]}\\
        & = \frac{\sigma^2}{C_{\nu,\alpha}}\lim_{s\to0} \frac{1}{s}\sum_{l=0}^\infty (\alpha^2+\lambda_l)^{-\nu-\frac{p}{2}}\nabla f_l(V_x)\left[\nabla f_l(V_{\exp_{x'}(sU_{x'})})-\nabla f_l(V_{x'})\right]\\
        & =  \frac{\sigma^2}{C_{\nu,\alpha}}\sum_{l=0}^\infty (\alpha^2+\lambda_l)^{-\nu-\frac{p}{2}}\nabla f_l(x)\nabla^2 f_l(V_{x'},U_{x'}).
    \end{align*}
\end{proof}

\begin{proof}[Proof of \Cref{thm:joint}]
The proof combines those in \Cref{lem:D_VZ_cov}, \Cref{lem:ZD_VZ}, \Cref{lem:D_UVZ_cov}, \Cref{cly:ZD_VZMat}, and \Cref{cly:D_UVZMat}.
\end{proof}

\begin{proof}[Proof of \Cref{prop:2-MSD}]
We prove by the definition of 2-MSD. 
\begin{align*}
&\lim_{t\to0} \frac{D_VZ(tU_x)-D_VZ(x)}{t}\\
&=\lim_{t\to0} \frac{1}{t}\left(\lim_{s\to0} \frac{Z(\exp_{\exp_x(tU_x)}(sV_{\exp_x(tU_x)}))-Z(\exp_x(tU_x))}{s}\right.\\
&\pushright{\left.-\lim_{s\to0} \frac{Z(\exp_{x}(sV_x))-Z(x)}{s}\right)}\\
& = \lim_{t,s\to0}\frac{1}{ts}\left(Z(\exp_{\exp_x(tU_x)}(sV_{\exp_x(tU_x)}))-Z(\exp_x(tU_x))-Z(\exp_{x}(sV_x))+Z(x)\right)\\
& \eqqcolon \lim_{t,s\to0}\frac{1}{ts}(a-b-c+d)
\end{align*}
Observe that
\begin{align*}
&\EE\left(a-b-c+d\right)^2\\
& = 4K(0)-2K(a,b)-2K(a,c)+2K(a,d)+2K(b,c)-2K(b,d)-2K(c,d)\\
& = 4K(0) -2K(s)-2K(a,c)+2K(a,d)+2K(b,c)-2K(t)-2K(s).
\end{align*}

Let $U=V$, then we have
\begin{align*}
&\lim_{t,s\to0} \EE\left(\frac{a-b-c+d}{ts}\right)^2\\
& = \lim_{t,s\to0}\frac{1}{t^2s^2}\left( 4K(0)-2K(a,b)-2K(a,c)+2K(a,d)+\right.\\
&\pushright{\left.2K(b,c)-2K(b,d)-2K(c,d)\right)}\\
& = \lim_{t,s\to0}\frac{1}{t^2s^2}\left( 4K(0) -2K(s)-2K(a,c)+2K(a,d)+2K(b,c)-2K(t)-2K(s)\right)\\
& =  \lim_{t}\frac{1}{t^2s^2}\left( 4K(0) -2K(t)-2K(t)+2K(2t)+2K(|t-s|)-2K(t)-2K(t)\right)\\
&=\lim_{t}\frac{1}{t^4}\left(6K(0) -8K(t)+2K(2t)\right)\\
& = \lim_{t}\frac{2}{t^4}\left(3K(0) -4(K(0)+\frac{1}{2}t^2K''(0)+\frac{1}{6}t^3K^{(3)}(0)+O(t^4))+K(0)+\right.\\&\pushright{\left.\frac{1}{2}(2t)^2K''(0)+\frac{1}{6}(2t)^3K^{(3)}(0)+O(t^4)\right)}\\
& = \lim_{t}\frac{2}{t^4}\left(-2t^2K''(0)+2t^2K''(0)-\frac{2}{3}t^3K^{(3)}(0)+\frac{4}{3}t^3K^{(3)}(0)+O(t^4)\right),
\end{align*}
which is finite if and only if
$$K'(0)=0,K''(0)<\infty, K^{(3)}(0)=0, K^{(4)}(0)<\infty.$$
\end{proof}


\section{Proofs for Section \ref{subsec:sphere}}\label{apdx:subsec:sphere}
In this section, we present proofs of \Cref{thm:MSC1-MSD_iso}, \Cref{thm:derivative_Sp}, \Cref{cly:K_V_Sp}, \Cref{thm:ZD_VZ_Sp}, \Cref{cly:ZD_VZ_Sp}, \Cref{thm:D^2_Sp}, \Cref{thm:S2_kernels}, \Cref{prop:S1_KV}.

\begin{proof}[Proof of \Cref{thm:MSC1-MSD_iso}]
    Let $\gamma(t)=\exp_x(tv)$ where $v\in T_x{\mathbb{S}^p}$, then by the definition of MSC, we have
\[\lim_{t\to0}\EE(Z(\gamma(t))-Z(x))^2=\lim_{t\to0}2K(0)-2K(t).\]
As a result, $Z$ is MSC if and only if $K(t)\to K(0)$, as $t\to0$, that is, $K$ is continuous at $0$. 
    By the definition of 1-MSD, it suffices to check whether the following limit exists or not:
    \begin{align*}
    &\lim_{t\to0} \frac{\EE\left(Z(\exp_{x_0}(tv))-Z(x_0)\right)^2} {t^2} =\\
    &\pushright{\lim_{t\to0}\frac{1}{t^2} \EE\left(Z(\exp_{x_0}(tv))Z(\exp_{x_0}(tv))-2Z(\exp_{x_0}(tv))Z(x_0)+Z(x_0)Z(x_0)\right),}\\
    & =  \lim_{t\to0}\frac{1}{t^2} \left(K(\exp_{x_0}(tv),\exp_{x_0}(tv))-2K(\exp_{x_0}(tv),x_0)+K(x_0,x_0)\right),\\
    & = \lim_{t\to0}\frac{1}{t^2} \left(2K(0)-2K( t )\right),\\
    & = \lim_{t\to0}\frac{2}{t^2} \left(K(0)-K(0)-K'(0)t-K''(0)\frac{t^2}{2} - O(t^3)\right),\\
    & = -\lim_{t\to0}\frac{2}{t}K'(0) -K''(0).\\
    \end{align*}
    Consequently, the limit exists if and only if $K(0)-K(t)=O(t^2)$, that is, $K'(0)=0$ and $|K''(0)|<\infty$. 
\end{proof}

\begin{proof}[Proof of \Cref{thm:derivative_Sp}]

First observe that the conclusion of \Cref{thm:derivative_Sp} is invariant under rescaling of $V$. Then observe that  holds when either $V_x=0$ or $V_{x'}=0$ as both sides of the equation are zero. As a result, we assume $\|V_x\|\neq0$ and $\|V_{x'}\|=0$ without loss of generality.

First let, 
    \begin{align*}
        A(t)&=\exp_x(tV_x)=\cos(t\|V_x\|)x+\sin(t\|V_x\|)\frac{V_x}{\|V_x\|},\\
        D(s)&=\exp_{x'}(sV_{x'})=\cos(s)x'+\sin(s)V_{x'},\\
        \xi(s)&= \arccos(\<A(t),D(s)\>).\\
    \end{align*}
    then
    \begin{align*}
        &K_V(x,x')=\lim_{t,s\to0} \frac{1}{ts}\left\{K(\exp_x(tV_x),\exp_{x'}(sV_{x'}))-K(\exp_x(tV_x),x')\right.\\
        &\pushright{\left.-K(x,\exp_{x'}(sV_{x'}))+K(x,x')\right\}}.\\
        & = \lim_{t,s\to0} \frac{1}{ts}\left\{K(\arccos(A(t),D(s)\>))-K(\arccos(\<A(t),x'\>))\right.\\
        &\left.-K(\arccos(\<x,D(s)\>))+K(\arccos(\<x,x'\>))\right\}.
    \end{align*}
    Then by chain rule, 
    \begin{align*}
    &\lim_{s\to0}\frac{1}{s}\left\{K(\arccos(A(t),D(s)\>))-K(\arccos(A(t),x'\>))\right\}\\
    &=\lim_{s\to0}\frac{1}{s}\left(K(\xi(s))-K(\xi(0))\right)= K'(\xi(0))\xi'(0)\\
    & = K'(\xi(0))\frac{-\<A(t),D'(0)\>}{(1-\<A(t),D(0)\>^2)^{1/2}} = -\frac{K'(\arccos(\<A(t),x'\>))\<A(t),V_{x'}\>}{(1-\<A(t),x'\>^2)^{1/2}}.
    \end{align*}
    Similarly, we have 
    \begin{align*}
    &\lim_{s\to0}\frac{1}{s}K(\arccos(\<x,D(s)\>-K(\arccos\<x,x'\>)\\
    & = -\frac{ K'(\arccos(\<x,x'\>))\<x,V_{x'}\>}{(1-\<x,x'\>^2)^{1/2}}.
    \end{align*}

Then we need to analyze
\begin{align*}
&-\lim_{t\to0}\frac{1}{t}\left(\frac{ K'(\arccos(\<A(t),x'\>))\<A(t),V_{x'}\>}{(1-\<A(t),x'\>^2)^{1/2}}-\frac{ K'(\arccos(\<A(0),x'\>))\<A(0),V_{x'}\>}{(1-\<A(0),x'\>^2)^{1/2}}\right)\\
&\coloneqq  -F'(0),
\end{align*}
where $F(t) = \frac{ K'(\arccos(\<A(t),x'\>))\<A(t),V_{x'}\>}{(1-\<A(t),x'\>^2)^{1/2}}$. To simplify the notation, let $B(t) = \<A(t),x'\>$ and $C(t) = \<A(t),V_{x'}\>$. We know that $A'(0) = V_x,~ B'(0) = \<V_x, x'\>,~C'(0) = \<V_x, V_{x'}\>$. Hence,
\begin{align*}
F'(t)&=\frac{dF}{dt}\bigg|_{t=0} =\frac{d}{dt}\frac{K'(\arccos(B(t)))C(t)}{(1 - B(t)^2)^{1/2}}\\
& = -\frac{K''(\arccos(B(t)))B'(t)C(t)}{1-B(t^2)}+\frac{K'(\arccos(B(t)))C'(t)}{(1-B(t^2))^{1/2}}\\
&\pushright{+\frac{K'(\arccos(B(t)))C(t)B(t)B'(t)}{(1-B(t^2))^{3/2}},}
\end{align*}
hence, 
\begin{align*}
F'(0)&= -\frac{-K''(\arccos(B(0))) B'(0)  C(0)} {1 - B(0)^2} +\frac{K'(\arccos(B(0)))C'(0)}{(1-B(0)^2)^{1/2}}\\
&\pushright{+\frac{ K'(\arccos(B(0)))  C(0)B(0)B'(0) }{(1 - B(0)^2)^{3/2}}}\\
& =  -\frac{K''(\arccos(\<x,x'\>)) \<V_x,x'\> \<x,V_{x'}\> } {{1 - \<x,x'\>^2}} +\frac{K'(\arccos(\<x,x'\>))\<V_x,V_{x'}\>}{(1 - \<x,x'\>^2)^{1/2}}\\
&\hspace{5cm}+\frac{ K'(\arccos(\<x,x'\>))  \<x,V_{x'}\>\<x,x'\>\<V_x,x'\> }{(1 - \<x,x'\>^2)^{3/2}},
\end{align*}
and the theorem follows, $K_V(x,x')=-F'(0)$. 

\end{proof}

\begin{proof}[Proof of \Cref{cly:K_V_Sp}]
Let $A(t)=\exp_x(tu)$, where $u\in T_x\mathbb{S}^p$ with $\|u\|=1$, so we have
\begin{align*}
\<x,A(t)\>&=\<x,\cos(t)x+\sin(t)u\>=\cos(t)\\
\<V_x,A(t)\>&=\<V_x,\cos(t)x+\sin(t)u\>=\sin(t)\<V_x,u\>\\
\<x,V_{A(t)}\>& = \<x,V_x+t\nabla_uV+O(t^2)\> = t\<x,\nabla_uV\>+O(t^2),\\
\<V_x,V_{A(t)}\> &=\<V_x,V_x+t\nabla_uV+O(t^2)\>=\<V_x,V_x\>+t\<V_x,\nabla_uV\>+O(t^2)\\
&=\<V_x,V_x\>+t\<V_x,\nabla_uV\>+O(t^2).
\end{align*}
From \Cref{eqn:D_VZ_Sp} we have
\begin{align*}
    &K_V(x,x)=\lim_{t\to0} K_V(x,\exp_x(tu))=\lim_{t\to0} K_V(x,A(t))\\
    & = \lim_{t\to0}\left\{- \frac{K'(\arccos(\<x,A(t)\>))\<V_x,V_{A(t)}\>}{(1 - \<x,A(t)\>^2)^{1/2}}  \right.\\
    &\pushright{\left.+\left(K''(\arccos(\<x,A(t)\>)) - \frac{ K'(\arccos(\<x,A(t)\>)) \<x,A(t)\>}{(1 - \<x,A(t)\>^2)^{1/2}}\right)\frac{\<V_x,A(t)\> \<x,V_{A(t)}\> } {{1 - \<x,A(t)\>^2}}\right\}},
\end{align*}
which can be simplified using the previous observations as
    \begin{align*}
    & \lim_{t\to0}\left\{-\frac{K'(t)(\<V_x,V_x\>+t\<V_x,\nabla_uV\>+O(t^2))}{(1 - \cos^2(t))^{1/2}}\right.\\
    &\pushright{\left.+\left(K''(t) -
    \frac{ K'(t) \cos(t) }{(1 -\cos^2(t))^{1/2}}\right)\frac{\sin(t)\<V_x,u\>(t\<x,\nabla_uV\>+O(t^2))} {{1 -\cos^2(t)}}\right\}}\\
    & = -\<V_x,V_x\>\lim_{t\to0}\frac{K'(t)}{t}\frac{t}{\sin(t)}+\<V_x,\nabla_uV\>\lim_{t\to0}t\;\frac{t}{\sin(t)}\;\frac{K'(t)}{t}\\
    &\qquad\qquad\qquad\qquad+\lim_{t\to0}\left(K''(t)-\frac{K'(t)}{t}\frac{t}{\sin(t)}\right)\<V_x,u\>\lim_{t\to0}\frac{\sin(t)(t\<x,\nabla_uV\>+O(t^2))}{\sin^2(t)}\\
    & =  -\<V_x,V_x\>K''(0),
    \end{align*}
    where the second equation from the bottom comes from the Taylor expansion of $K$ at 0: $K(t)=K(0)+K''(0)t^2/2+O(t^3)$ and the differentiability of $Z$. Note that the above calculation does not depend on the choice of $u\in T_x \mathbb{S}^p$ . 
\end{proof}

\begin{proof}[Proof of \Cref{thm:ZD_VZ_Sp}]
First observe that for any $\alpha\in\RR$,
\[\cov(Z(x),D_{\alpha V}Z(x'))=\alpha\cov(Z(x),D_{V}Z(x')),~\frac{K'(d)\<x,\alpha V_{x'}\>}{(1-\<x,x'\>^2)^{1/2}}=\alpha\frac{K'(d)\<x, V_{x'}\>}{(1-\<x,x'\>^2)^{1/2}},\]
so the conclusion of \Cref{thm:ZD_VZ_Sp} is invariant under rescaling $V$. 

Then observe that \Cref{thm:ZD_VZ_Sp} holds when $V_{x'}=0$. When $V_{x'}\geq 0$, we assume $\|V_{x'}\|=1$ without loss of generality and let $A(s) = \exp_{x'}(sV_{x'})$ and $d(s) = d_{\mathbb{S}^p}(x,A(s))$, where $d_{\mathbb{S}^p}(x,A(s))=\arccos(\<x,\cos(s)x'+\sin(s)V_{x'}\>)$, then
    \begin{align*}
&\cov(Z(x),D_VZ(x')) = \cov\left(Z(x),\lim_{s\to 0}\frac{Z(\exp_{x'}(sV_{x'}))-Z(x')}{s}\right)\\
& = \lim_{s\to0} \frac{1}{s}\left[K(x,\exp_{x'}(sV_{x'}))-K(x,x')\right]\\
& = \lim_{s\to0} \frac{1}{s}\left[K(d(s))-K(d(0))\right]\\
& = K'(d(0))\frac{-\<x,A'(s)\>}{(1-\<x,A(s)\>^2)^{1/2}}\bigg|_{s=0}\\
& = -\frac{K'(d)\<x,V_{x'}\>}{(1-\<x,x'\>^2)^{1/2}}. 
\end{align*}
Similarly, $\cov(D_VZ(x),Z(x'))=-\frac{K'(d)\<x',V_{x}\>}{(1-\<x',x\>^2)^{1/2}}\ne\cov(Z(x),D_VZ(x'))$.
\end{proof}

\begin{proof}[Proof of \Cref{cly:ZD_VZ_Sp}]
Let $A(t)=\exp_x(tu)$, where $u\in T_x\mathbb{S}^p$ with $\|u\|=1$, so we have
\begin{align*}
\<A(t),x\>&=\<\cos(t)x+\sin(t)u,x\>=\cos(t)\\
\<A(t),V_x\>&=\<\cos(t)x+\sin(t)u,V_x\>=\sin(t)\<u,V_x\>.
\end{align*}
Now we can simplify $K(x,A(t))$ as:
    \begin{align*}
    &K(x,A(t))=\lim_{t\to0}\cov(Z(\exp_x(tu)),D_VZ(x))= -\lim_{t\to0}\frac{K'(t)\<A(t),V_{x}\>}{(1-\<A(t),x\>^2)^{1/2}}\\
    & = -\lim_{t\to0}\frac{K'(t)\sin(t)\<u,V_x\>}{(1-\cos^2(t))^{1/2}}=-K'(0)\<u,V_x\>=0.
    \end{align*}
    where the second equation from the bottom comes from the Taylor expansion of $K$ at 0: $K(t)=K(0)+K''(0)t^2/2+O(t^3)$ and the differentiability of $Z$. Note that the above result does not depend on the choice of $u\in T_x \mathbb{S}^p$. 
\end{proof}

\begin{proof}[Proof of \Cref{thm:D^2_Sp}]
Let $A(t)=\exp_x(tu)$, where $u\in T_x\mathbb{S}^p$ with $\|u=1\|$, then when $t\approx 0$,

\begin{align*}
K_V(x,A(t))&=-\frac{K'(t)\<V_x,V_{A(t)}\>}{(1 - \<x,A(t)\>^2)^{1/2}}\\
&\pushright{+\left(K''(t) -\frac{ K'(t)  \<x,A(t)\> }{(1 - \<x,A(t)\>^2)^{1/2}}\right) \frac{\<V_x,A(t)\> \<x,V_{A(t)}\> } {{1 - \<x,A(t)\>^2}}}\\
& = -\frac{K'(t)\<V_x,V_{A(t)}\>}{\sin(t)}\\
&\pushright{+\left(K''(t) -\frac{ K'(t)  \<x,A(t)\> }{(1 - \<x,A(t)\>^2)^{1/2}}\right)\<V_x,u\>\frac{\sin(t)(t\<x,\nabla_uV\>+O(t^2))}{\sin^2(t)}}\\
\end{align*}
Observe that similar to previous calculations,
\begin{equation*}
    \begin{split}
        &\<V_x,V_{A(t)}\>=\<V_x,V_x\>+t\<V_x,\nabla_uV\>+O(t^2),\\
        &\<V_{A(t)},V_{A(t)}\>=\<V_x,V_x\>+2t\<V_x,\nabla_uV\>+t^2\<\nabla_uV,\nabla_uV\>+O(t^2).
    \end{split}
\end{equation*}
Using \Cref{cly:K_V_Sp} we get, $K_V(A(t),A(t))=-\<V_{A(t)},V_{A(t)}\>K''(0)=-(\<V_x,V_x\>+2t\<\nabla_uV,V_x\>+t^2\<\nabla_uV,\nabla_uV\>+O(t^2))K''(0)$, and $K_V(x,x)=-\<V_x,V_x\>K''(0)$. Hence,
\begin{align*}
&\lim_{t\to0}\EE\left(\frac{D_VZ(A(t))-D_VZ(x)}{t}\right)^2 = \frac{1}{t^2} \left[K_V(A(t),A(t))-2K_V(x,A(t))+K_V(x,x)\right]\\
& =\lim_{t\to0}\frac{1}{t^2} \left[-(2\<V_x,V_x\>+2t\<\nabla_uV,V_x\>\right.\\
&\pushright{\left.+t^2\<\nabla_uV,\nabla_uV\>+O(t^2))K''(0)-2\left\{-\frac{K'(t)\<V_x,V_{A(t)}\>}{\sin(t)}\right.\right.}\\
&\pushright{\left.\left.+\left(K''(t)-\frac{K'(t)\<x,A(t)\>}{\sin(t)}\right)\<V_x,u\>\frac{\sin(t)(t\<x,\nabla_uV\>+O(t^2))}{\sin^2(t)}\right\}\right]}.
\end{align*}
We deal with the terms inside the limit individually. The first term can be re-written as
\begin{equation*}
   -K''(0)\<\nabla_uV,\nabla_uV\> - K''(0) - K''(0)\frac{2}{t}\<\nabla_uV,V_x\>- K''(0)\frac{2}{t^2}\<V_x,V_x\>.
\end{equation*}
Leveraging the Taylor's expansion, $K(t) = K(0) + K'(0)t + K''(0)t^2/2! + K^{(3)}(0)t^3/3! + K^{(4)}(0)t^4/4!+O(t^4)$, the second term is,

\begin{align*}
&\frac{2}{t^2}\frac{K'(t)}{\sin(t)}\left(\<V_x,V_x\>+t\<V_x,\nabla_uV\>+O(t^2)\right)\\
&=\frac{2}{t^3}K'(t)\left(\<V_x,V_x\>+t\<V_x,\nabla_uV\>+O(t^2)\right)\frac{t}{\sin(t)},\\
&\qquad=\frac{2}{t^3}\left(K'(0)+K''(0)t+K^{(3)}(0)\frac{t^2}{2}+K^{(4)}(0)\frac{t^3}{3!}+O(t^4)\right)\\
&\pushright{\times\left(\<V_x,V_x\>+t\<V_x,\nabla_uV\>+O(t^2)\right)\frac{t}{\sin(t)}},\\
&\qquad=\left\{\left(K'(0)\frac{2}{t^3}+K''(0)\frac{2}{t^2}+K^{(3)}(0)\frac{1}{t}+\frac{1}{3}K^{(4)}(0)\right)\<V_x,V_x\>+O(t)
\right.\\
&\hspace{2cm}\left.+\left(K'(0)\frac{2}{t^2}+K''(0)\frac{2}{t}+K^{(3)}(0)+\frac{t}{3}K^{(4)}(0)\right)\<V_x,\nabla_uV\>+O(t^2)\right.\\
&\hspace{2cm}+\left.K'(0)\frac{2}{t}+2K''(0)+K^{(3)}(0)t+K^{(4)}(0)\frac{t^2}{3}+O(t^3)\right\}\frac{t}{\sin(t)},
\end{align*}
and, the third term, as a multiple of $\<V_x,u\>$, is
\begin{equation*}
    \begin{split}
        &-\frac{2}{t^2}\left(K''(t)-\frac{K'(t)\cos(t)}{\sin(t)}\right)\frac{t\<x,\nabla_uV\>+O(t^2)}{\sin(t)}\\
        &=-\frac{2}{t^2}\left(K''(t)-\frac{K'(t)\cos(t)}{\sin(t)}\right)\left(\<x,\nabla_uV\>\frac{t}{\sin(t)}+\frac{t}{\sin(t)}O(t)\right).
    \end{split}
\end{equation*}
Individually they are
\begin{equation*}
    \begin{split}
        &-\left(K''(0)\frac{2}{t^2}+K^{(3)}(0)\frac{2}{t}+K^{(4)}(0)+O(t)\right)\frac{t}{\sin(t)}\<x,\nabla_uV\>\\
        &\pushright{-\left(K''(0)\frac{2}{t}+2K^{(3)}(0)+K^{(4)}(0)t+O(t^2)\right)\frac{t}{\sin(t)},}
    \end{split}
\end{equation*}
and,
\begin{equation*}
\begin{split}
    &\left(K'(0)\frac{2}{t^3}+K''(0)\frac{2}{t^2}+K^{(3)}(0)t+K^{(4)}(0)\frac{1}{3}+O(t)\right)\cos(t)\frac{t^2}{\sin^2(t)}\<x,\nabla_uV\>\\
    &\pushright{ +\left(K'(0)\frac{2}{t^2}+K''(0)\frac{2}{t}+K^{(3)}(0)+K^{(4)}(0)\frac{t}{3}+O(t^2)\right)\cos(t)\frac{t^2}{\sin^2(t)}.}
\end{split}
\end{equation*}
Now evaluating the limit after collecting expressions for the terms above we get,
\begin{equation*}
\begin{split}
    &\lim_{t\to0}2\left(\frac{1}{t^3}\<V_x,V_x\>+\frac{1}{t^2}\<V_x,\nabla_uV\>+\frac{1}{t}+\frac{1}{t^3}\<V_x,u\>\<x,\nabla_uV\>+\frac{1}{t^2}\<V_x,u\>\right)K'(0)\\
    &~~+(1-\<\nabla_uV,\nabla_uV\>)K''(0)\\
    &~~+\lim_{t\to0}\left(\frac{1}{t}\<V_x,V_x\>+\<V_x,\nabla_uV\>-\frac{2}{t}\<V_x,u\>\<x,\nabla_uV\>-\<V_x,u\>\right)K^{(3)}(0)\\
    &~~+\left(\frac{1}{3}\<V_x,V_x\>-\frac{2}{3}\<V_x,u\>\<x,\nabla_uV\>\right)K^{(4)}(0).
\end{split}
\end{equation*}
Since, $V_x$ is a non-zero vector field, $Z$ is 2-MSD if and only if $K'(0)=0$, $K^{(3)}(0)=0$, $|K''(0)|<\infty$ and $|K^{(4)}(0)|< \infty$. 
\end{proof}

\begin{proof}[Proof of \Cref{thm:S2_kernels}]
   Recall that the exponential function at $x$ admits a simple form: $\exp_{x_0}(tv) = \cos(t)x+\sin(t)v$ for $\|v\|=1$, and the geodesic distance (great circle distance) between $x_0$ and $\exp_{x_0}(tv)$ is given by
    $\theta\coloneqq d(x_0,\exp_{x_0}(tv))=t$ while the Euclidean distance or, chordal distance, is given by $\|x_0-\exp_{x_0}(tv)\|=2\sin\left(\frac{\theta}{2}\right)=2\sin\left(\frac{t}{2}\right)$.

By Theorem \ref{thm:MSC1-MSD_iso} and \ref{thm:D^2_Sp}, it suffices to check the leading term of $K(0)-K(t)$ where $t\sim 0$, denoted by $O(t^\eta)$: The GP is 1-MSD if and only if the $\eta\geq 2$. Now we calculate the leading term of the 12 kernels one by one.
\begin{enumerate}[leftmargin=*, align=left]
\item[\textbf{1. Chordal Mat\'ern.}] Observe that $\frac{t}{2\sin(t/2)}=1$, we can replace $2\sin(t/2)$ by $t$ in all limits that involve $t\to0$. As a result, the limiting behavior of $K$ around $0$ is the same as the Mat\'ern kernel in Euclidean space: $K(t)=\alpha^\nu K_\nu(\alpha t)$. As a result, $Z$ is MSC if $\nu>0$, 1-MSD if $\nu>1$ and 2-MSD if $\nu>2$ from existing literature~\cite{stein1999interpolation}. 
\item[\textbf{2. Circular Mat\'ern.}] Note that $|\exp(ilt)|=1$, so $|\sum_{l=-\infty}^\infty(\alpha^2+l^2)^{-\nu-1/2}\exp(ilt)|\leq \sum_{l=-\infty}^\infty(\alpha^2+l^2)^{-\nu-1/2}<\infty$ for any $\nu>0$. As a result, we can exchange limit, sum, and derivative. First, observe that 
\begin{align*}
    \lim_{t\to0}K(t)&=\lim_{t\to0}\sum_{l=-\infty}^\infty(\alpha^2+l^2)^{-\nu-1/2}\exp(ilt)=\sum_{l=-\infty}^\infty(\alpha^2+l^2)^{-\nu-1/2}\lim_{t\to0}\exp(ilt)\\
    & =\sum_{l=-\infty}^\infty(\alpha^2+l^2)^{-\nu-1/2} = K(0).
\end{align*}
As a result, $Z$ is MSC. Second, observe that
\begin{align*}
    K'(0)&=\frac{d}{dt}\sum_{l=-\infty}^\infty(\alpha^2+l^2)^{-\nu-1/2}\exp(ilt)\bigg{|}_{t=0}
    = \sum_{l=-\infty}^\infty(\alpha^2+l^2)^{-\nu-1/2}\frac{d\exp(ilt)}{dt}\bigg{|}_{t=0}\\
    & = \sum_{l=-\infty}^\infty(\alpha^2+l^2)^{-\nu-1/2}il\exp(ilt)\bigg{|}_{t=0}=i\sum_{l=-\infty}^\infty(\alpha^2+l^2)^{-\nu-1/2}l\neq 0.
\end{align*}
so $Z$ is not 1-MSD.
\item [\textbf{3. Legendre-Mat\'ern.}]
By \cite{borovitskiy2020matern}, the Legendre-Mat\'ern can be expressed as $K(x,x')= \sum_{l=0}^\infty(\alpha^2+l^2)^{-\nu-1/2}f_l(x)f_l(x')$ where $f_l$ is the spherical harmonics. As a result, it coincide with the Mat\'ern defined in Equation \ref{eqn:Matern} so the condition for MSC, 1-MSD and 2-MSD follow Theorem \ref{thm:MSC},\ref{thm:1MSD} and \ref{thm:2MSD}. 
\item [\textbf{4. Truncated Legendre-Mat\'ern.}] 
The truncated Legendre-Mat\'ern 
admits $K(x,x')\propto \sum_{l=1}^T(\alpha^2+l^2)^{-\nu-1/2}f_l(x)f_l(x')$, where $f_l$ is the spherical harmonics \citep[see e.g.,][]{borovitskiy2020matern}. As a direct consequence of Theorem \ref{thm:truncation}, it is MSC, 1-MSD and 2-MSD due to the finite truncation. 

\item[\textbf{4. Bernoulli Mat\'ern.}] Similar to the Circular Mat\'ern case, we have
\begin{align*}
    K(t)&=1+\alpha+\sum_{l\neq 0}|l|^{-2n}\exp(ilt)
    = 1+\alpha+\sum_{l\neq 0}|l|^{-2n}(1+ilt+O(t^2))\\
    &= K(0)+ O(t).
\end{align*}
As a result, $Z$ is MSC but not 1-MSD.
\item[\textbf{5. Powered exponential.}] Observe that 
\begin{align*}
    K(t)&=\exp(-(\alpha t)^\nu)
    = (1-(\alpha t)^\nu+O(t^{2\nu})) = K(0)+ O(t^{\nu}).
\end{align*}
Since $\nu\in(0,1]$, $Z$ is MSC but not 1-MSD.
\item[\textbf{6. Generalized Cauchy.}] Observe that 
\begin{align*}
    K(t)&=\left(1+(\alpha t )^\nu\right)^{-\tau/\nu}
    = 1-\frac{\tau}{\nu}(\alpha t)^\nu+O(t^{2\nu})= K(0)+ O(t^{\nu}).
\end{align*}
Since $\nu\in(0,1]$, $Z$ is MSC but not 1-MSD.
\item[\textbf{7. Multiquadric.}] Since the numerator won't affect the smoothness, we set it to be one for simplicity.
\begin{align*}
    K(t)&=\frac{1}{\left(1+\tau^2-2\tau\cos  t \right)^\alpha}
    = \frac{1}{\left(1+\tau^2-2\tau(1-O(t^2))\right)^\alpha}\\
    & = \frac{1}{((1-\tau)^2+O(t^2))^\alpha}=\frac{1}{(1-\tau)^{2\alpha}}\frac{1}{1+O(t^2)}\\
    &=\frac{1}{(1-\tau)^{2\alpha}}(1+O(t^2)) = K(0)+ O(t^{2}).
\end{align*}
As a result, $Z$ is MSC and 1-MSD. To check 2-MSD, we need to analyze the higher order terms:
\begin{align*}
    K(t)&=\frac{1}{\left(1+\tau^2-2\tau\cos  t \right)^\alpha}
    = \frac{1}{\left(1+\tau^2-2\tau(1-t^2/2+O(t^4))\right)^\alpha}\\
    & = \frac{1}{((1-\tau)^2+\tau t^2+O(t^4))^\alpha}=\frac{1}{(1-\tau)^{2\alpha}}\frac{1}{(1+\frac{\tau}{(1-\tau)^{2}}t^{2}+O(t^4))^\alpha}\\
    &=\frac{1}{(1-\tau)^{2\alpha}}\frac{1}{1+\frac{\tau\alpha}{(1-\tau)^{2}}t^{2}+O(t^4))}\\
    &= \frac{1}{(1-\tau)^{2\alpha}}\left(1+\frac{\tau\alpha}{(1-\tau)^{2}}t^{2}+O(t^4)\right)\\
    &=K(0)+ \frac{1}{2}K''(0)t^{2}+O(t^4),
\end{align*}
so we conclude that $Z$ is 2-MSD.

\item[\textbf{8. Sine Power.}] Similar to Chordal Mat\'ern, we replace $\sin\left(\frac{t}{2}\right)$ by $\frac{t}{2}$:
\begin{align*}
    K(t)&=1-\left(\frac{ t }{2}\right)^\nu
    = K(0)+ O(t^{\nu}).
\end{align*}
Since $\nu\in(0,2)$, $Z$ is MSC but not 1-MSD.
\item [\textbf{9. Spherical.}] When $t\to0$, we can assume $t<\frac{1}{\alpha}$ so that $(1-\alpha t)_+ =1-\alpha t$, then observe that
\begin{align*}
    K(t)&=\left(1+\frac{\alpha t }{2}\right)(1-\alpha t )_+^2
    =\left(1+\frac{\alpha t }{2}\right)(1-\alpha t )^2\\
    & = 1+O(t) = K(0)+ O(t).
\end{align*}
As a result, $Z$ is MSC but not 1-MSD.
\item[\textbf{10. Askey.}] Similar to Spherical kernel, 
observe that
\begin{align*}
    K(t)&=(1-\alpha t )_+^\tau
    =(1-\alpha t )^\tau = 1+O(t) = K(0)+ O(t).
\end{align*}
As a result, $Z$ is MSC but not 1-MSD.
\item [\textbf{11. $C^2$-Wendland.}] Similar to Spherical kernel, 
observe that
\begin{align*}
    K(t)&=(1+\tau\alpha t )(1-\alpha t )_+^\tau
    =(1+\tau\alpha t )(1-\alpha t )^\tau\\
    &= (1+\tau\alpha t)(1-\tau\alpha t+O(t^2))=1-\tau\alpha t+\tau\alpha t +O(t^2)\\
    &= K(0)+ O(t^2).
\end{align*}
As a result, $Z$ is MSC and 1-MSD. To check 2-MSD, we need to analyze the higher order terms:
\begin{align*}
    K(t)&=(1+\tau\alpha t )(1-\alpha t )_+^\tau
    =(1+\tau\alpha t )(1-\alpha t )^\tau\\
    &= (1+\tau\alpha t)\left(1-\tau\alpha t+\frac{\tau(\tau-1)}{2}\alpha^2t^2-\frac{\tau(\tau-1)(\tau-2)}{6}\alpha^3t^3+O(t^4)\right)\\
    &=1-\tau\alpha t+\tau\alpha t +\left(\frac{\tau(\tau-1)\alpha^2}{2}-\tau^2\alpha^2\right)t^2+\\
    &\pushright{\left(\frac{\tau^2(\tau-1)\alpha^3}{2}-\frac{\tau(\tau-1)(\tau-2)\alpha^3}{6}\right)t^3+O(t^4)}\\
    &= K(0)+ \frac{1}{2}K''(0)t^2+\frac{\tau(\tau-1)(\tau+1)\alpha^3}{3}t^3+O(t^4).
\end{align*}
So we conclude that $Z$ is 1-MSD but not 2-MSD. 
\item[\textbf{12. $C^4$-Wendland.}]Similar to $C^2$-Wendland, 
observe that
\begin{align*}
    K(t)&=\left(1+\tau\alpha t +\frac{\tau^2-1}{3}(\alpha t )^2\right)(1-\alpha t )_+^\tau
    =\left(1+\tau\alpha t +\frac{\tau^2-1}{3}(\alpha t )^2\right)(1-\alpha t )^\tau\\
    &= \left(1+\tau\alpha t +O(t^2)\right)(1-\tau\alpha t+O(t^2))=1-\tau\alpha t+\tau\alpha t +O(t^2)\\
    &= K(0)+ O(t^2).
\end{align*}
As a result, $Z$ is MSC and 1-MSD. To check 2-MSD, we need to analyze the higher order terms:
\begin{align*}
    &K(t)=\left(1+\tau\alpha t +\frac{\tau^2-1}{3}(\alpha t )^2\right)(1-\alpha t )_+^\tau
    =\left(1+\tau\alpha t +\frac{\tau^2-1}{3}(\alpha t )^2\right)(1-\alpha t )^\tau\\
    &= \left(1+\tau\alpha t +\frac{\tau^2-1}{3}\alpha^2 t^2\right)\times\\
    &\pushright{\left(1-\tau\alpha t+\frac{\tau(\tau-1)}{2}\alpha^2t^2-\frac{\tau(\tau-1)(\tau-2)}{6}\alpha^3t^3+O(t^4)\right)}\\
    &= 1-\tau\alpha t+\tau\alpha t +\left(\frac{(\tau^2-1)\alpha^2}{3}+\frac{\tau(\tau-1)\alpha^2}{2}-\tau^2\alpha^2\right)t^2+\\&\pushright{\left(\frac{\tau^2(\tau-1)\alpha^3}{2}-\frac{\tau(\tau-1)(\tau-2)\alpha^3}{6}-\frac{(\tau^2-1)\tau\alpha^3}{3}\right)t^3+O(t^4)}\\
    &= K(0)+ \frac{1}{2}K''(0)t^2+O(t^4).
\end{align*}
Therefore, we conclude that $Z$ is 2-MSD. 
\end{enumerate}

\end{proof}

\begin{proof}[Proof of \Cref{prop:S1_KV}]
Note that the distance $|x-x'|\in[0,1]$ is not the geodesic distance we used above, but they coincide up to a rescaling by $2\pi$. Equivalently, we can re-scale the tangent vectors by $2\pi$ in all exponential maps. As a result, we still keep the same notation but the distance different by $2\pi$. 

For $s=1$, 
\begin{align*}
    K_{3/2}'(u) &=\frac{\sigma^2}{C_{\nu,3/2}}\left[-a_{1,0}\alpha\sinh(u)+a_{1,1}(\sinh(u)+\alpha u \cosh(u))\right] \\
    K_{3/2}''(u)&=\frac{\sigma^2}{C_{\nu,3/2}}\left[-a_{1,0}\alpha^2\cosh(u)+a_{1,1}\left(\alpha\cosh(u)+\alpha\cosh(u)-\alpha^2u\sinh(u)\right)\right]. 
\end{align*}
Then we have
 \begin{align*}
    &K_{V,3/2}(x,x')=-\frac{K_{3/2}'(s)\<V_x,V_{x'}\>}{\sqrt{1 - \<x,x'\>^2}}+\\
    &\pushright{\left(K_{3/2}''(s) - K_{3/2}'(u)\frac{\<x,x'\>}{(1 - \<x,x'\>^2)^{1/2}}\right)\frac{\<x,V_{x'}\>}{(1 - \<x,x'\>^2)^{1/2}}\frac{\<V_x,x'\>}{(1 - \<x,x'\>^2)^{1/2}}}\\
    &=-\frac{\sigma^2}{C_{\nu,3/2}}\left[\left\{-a_{1,0}\alpha\sinh(u)+a_{1,1}(\sinh(u)+\alpha u \cosh(u))\right\}\frac{\<V_x,V_{x'}\>}{\sqrt{1 - \<x,x'\>^2}} + \right.\\&\left.\hspace{2cm}\left(\vphantom{\frac{\<x,x'\>}{(1 - \<x,x'\>^2)^{1/2}}}-a_{1,0}\alpha^2\cosh(u)+a_{1,1}\left(\alpha\cosh(u)+\alpha\cosh(u)-\alpha^2u\sinh(u)\right)-\right.\right.\\&\left.\hspace{3cm}\left[-a_{1,0}\alpha\sinh(u)+a_{1,1}(\sinh(u)+\alpha u \cosh(u))\right]\frac{\<x,x'\>}{(1 - \<x,x'\>^2)^{1/2}}\right)\times\\&\pushright{\left.\frac{\<x,V_{x'}\>}{(1 - \<x,x'\>^2)^{1/2}}\frac{\<V_x,x'\>}{(1 - \<x,x'\>^2)^{1/2}}\right]}.
    \end{align*}
For $s=2$,
\begin{align*}
    K_{5/2}'(u) &=\frac{\sigma^2}{C_{\nu,5/2}}\left[-a_{2,0}\alpha\sinh(u)+a_{2,1}(\sinh(u)+\alpha u \cosh(u))+\right.\\
    &\pushright{\left.a_{22}(2u\cosh(u)-\alpha u^2\sinh(u))\right].}
\end{align*}
\begin{align*}
    &K_{5/2}''(u) =\frac{\sigma^2}{C_{\nu,5/2}}\left[-a_{2,0}\alpha^2\cosh(u)+a_{2,1}\left(\alpha\cosh(u)+\alpha\cosh(u)-\alpha^2u\sinh(u)\right)+\right.\\
    &\pushright{\left.a_{22}\left(2\cosh(u)-2\alpha u\sinh(u)-2\alpha u\sinh(u)-\alpha^2u^2\cosh(u)\right)\right]}\\
    & = \frac{\sigma^2}{C_{\nu,5/2}}\left[-a_{2,0}\alpha^2\cosh(u)+a_{2,1}\left(2\alpha\cosh(u)-\alpha^2u\sinh(u)\right)+\right.\\
    &\pushright{\left.a_{22}\left(2\cosh(u)-4\alpha u\sinh(u)-\alpha^2u^2\cosh(u)\right)\right]}.
\end{align*}
Then we have 
 \begin{align*}
    &K_{V,5/2}(x,x')=-\frac{K_{5/2}'(s)\<V_x,V_{x'}\>}{\sqrt{1 - \<x,x'\>^2}}+\\
    &\hspace{2.5cm}\left(K_{5/2}''(s) - K_{5/2}'(s)\frac{\<x,x'\>}{(1 - \<x,x'\>^2)^{1/2}}\right)\frac{\<x,V_{x'}\>}{(1 - \<x,x'\>^2)^{1/2}}\frac{\<V_x,x'\>}{(1 - \<x,x'\>^2)^{1/2}}\\
    &=-\frac{\sigma^2}{C_{\nu,5/2}}\left[\left\{-a_{2,0}\alpha\sinh(u)+a_{2,1}(\sinh(u)+\alpha u \cosh(u))+\right.\right.\\
    &\left.\left.a_{22}(2u\cosh(u)-\alpha u^2\sinh(u))\right\}\frac{\<V_x,V_{x'}\>}{\sqrt{1 - \<x,x'\>^2}} + \left(\frac{\<x,x'\>}{(1 - \<x,x'\>^2)^{1/2}}-a_{2,0}\alpha^2\cosh(u)+\right.\right.\\
    &\left.\left.a_{2,1}\left(2\alpha\cosh(u)-\alpha^2u\sinh(u)\right)+a_{22}\left(2\cosh(u)-4\alpha u\sinh(u)-\alpha^2u^2\cosh(u)\right)-\right.\right.\\&\left.\left[-a_{2,0}\alpha\sinh(u)+a_{2,1}(\sinh(u)+\alpha u \cosh(u))+\right.\right.\\
    &\left.\left.a_{22}(2u\cosh(u)-\alpha u^2\sinh(u))\right]\frac{\<x,x'\>}{(1 - \<x,x'\>^2)^{1/2}}\right)\left.\frac{\<x,V_{x'}\>}{(1 - \<x,x'\>^2)^{1/2}}\frac{\<V_x,x'\>}{(1 - \<x,x'\>^2)^{1/2}}\right].
    \end{align*}
\end{proof}

\section{Truncated covariance functions}\label{sec:trunc}
The computational tractability of covariance functions in \Cref{eqn:Matern,eqn:RBF} involving infinite sums is in question. The natural alternative is truncation at a large number of terms, say $T$, $K^T(x,x')=\sum_{l=0}^{T}a_l f_l(x)f_l(x')$, where $a_l=(\alpha^2+\lambda_l)^{-\nu-p/2}$ for Mat\'ern and $a_l=e^{-\frac{\lambda_l}{2\alpha^2}}$ for RBF. The truncated covariance is positive definite under certain conditions on the truncation level $T$~\citep{hitczenko2012some,li2023inference}. Assuming the truncated covariance is positive definite, the following theorem is a disadvantage of finite truncation--smoothness of $K$ is not preserved in $K^T$. 

\begin{theorem}\label{thm:truncation}
If $0<K^T<\infty$ then it is always MSC, 1-MSD and 2-MSD, regardless of $a_l$ or any other parameter.
\end{theorem}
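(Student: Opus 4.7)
The plan is to exploit the fact that truncating the spectral expansion at a finite level $T$ eliminates the uniform convergence concerns that drove the dimension-dependent conditions in \Cref{thm:MSC}, \Cref{thm:1MSD}, and \Cref{thm:2MSD}. Since each eigenfunction $f_l$ of $-\Delta_g$ on the compact Riemannian manifold $\MM$ is smooth (indeed real-analytic), any finite linear combination of the form $K^T(x,x')=\sum_{l=0}^{T}a_lf_l(x)f_l(x')$ inherits $C^\infty$ regularity in both arguments, provided only that $K^T$ is positive definite so that $Z$ is a valid GP.

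First, I would verify MSC by direct computation. For any geodesic $\gamma$ with $\gamma(0)=x$,
\begin{equation*}
\EE(Z(\gamma(t))-Z(x))^2 = \sum_{l=0}^{T} a_l\left(f_l(\gamma(t))-f_l(x)\right)^2.
\end{equation*}
Because this is a finite sum, the exchange of $\lim_{t\to 0}$ with $\sum_{l=0}^{T}$ is automatic and requires no Weyl-law or $\|f_l\|_\infty$ bound. Continuity of each $f_l$ together with continuity of $\gamma$ forces each summand to zero, yielding MSC regardless of the values of $a_l$.

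Second, for 1-MSD, the analogous identity gives
\begin{equation*}
\EE\left(\frac{Z(\gamma(t))-Z(x)}{t}\right)^2 = \sum_{l=0}^{T} a_l\left(\frac{f_l(\gamma(t))-f_l(x)}{t}\right)^2,
\end{equation*}
and smoothness of $f_l$ ensures each summand converges to $a_l\,\langle\nabla f_l(x),\gamma'(0)\rangle^2$. Since we have a finite sum of finite quantities, the limit exists; by \Cref{def:1-MSD} the process is 1-MSD. For 2-MSD, I would apply the same template to the truncated cross-covariance of $D_VZ$ obtained from \Cref{cly:ZD_VZMat}, namely $K_V^T(x,x')=\sum_{l=0}^{T}a_l\nabla f_l(V_x)\nabla f_l(V_{x'})$, using that Hessians of the finitely many smooth $f_l$ are uniformly bounded on the compact $\MM$.

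The main ``obstacle'' is essentially nonexistent: the finite truncation sidesteps the interplay between the Weyl law $\lambda_l\asymp l^{2/p}$ and the sup-norm estimates for $f_l$, $\nabla f_l$, $\nabla^2 f_l$ that forced the conditions $\nu>(p-1)/2$, $(p+1)/2$, $(p+3)/2$ in the untruncated case. The only substantive step is observing that a finite sum of smooth terms is smooth, and the conclusion follows independently of $\{a_l\}_{l=0}^{T}$, $\alpha$, $\nu$, $\sigma^2$, or $p$, as long as $K^T$ remains positive definite so that a valid GP exists in the first place.
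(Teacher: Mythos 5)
Your proposal is correct and follows essentially the same route as the paper's proof: write each mean-square increment as a finite sum over $l=0,\dots,T$, exchange the limit with the finite sum, and invoke the smoothness of the eigenfunctions $f_l$ (and of $V$ for the 2-MSD case via $K_V^T$). The observation that finiteness of the sum removes the need for the Weyl-law and sup-norm estimates is exactly the point of the paper's argument.
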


\begin{proof}[Proof of \Cref{thm:truncation}]
Let $x\in\MM$ and $\gamma:(-\delta,\delta)\to \MM$ be a smooth curve with $\gamma(0)=x$. We show the truncated process $Z^T$ is MSC first. 
\begin{align*}
\EE(Z^T(\gamma(t))-Z^T(x))^2 & = \EE \left(Z^T(\gamma(t))Z^T(\gamma(t))-2Z^T(\gamma(t))Z^T(\gamma(0))+\right.\\
&\pushright{\left.Z^T(\gamma(0))Z^T(\gamma(0))\right)}\\
& = K^T(\gamma(t),\gamma(t))-2K^T(\gamma(t),\gamma(0))+K^T(\gamma(0),\gamma(0))\\
& = \sum_{l=0}^{T}a_l\left(f_l(\gamma(t))f_l(\gamma(t))-2f_l(\gamma(t))f_l(\gamma(0))+f_l(\gamma(0))f_l(\gamma(0))\right)\\
& = \sum_{l=0}^{T}a_l\left(f_l(\gamma(t))-f_l(\gamma(0))\right)^2. 
\end{align*}
Since the sum is finite, we can exchange the limit and sum, so by the continuity of the harmonic functions $f_l$, we have
\begin{equation*}
    \begin{split}
        \lim_{t\to0} \EE(Z^T(\gamma(t))-Z^T(x))^2 &=\lim_{t\to0}\sum_{l=0}^{T}a_l\left(f_l(\gamma(t))-f_l(\gamma(0))\right)^2,\\
        &=\sum_{l=0}^{T}a_l\lim_{t\to0}\left(f_l(\gamma(t))-f_l(\gamma(0))\right)^2=0.
    \end{split}
\end{equation*}
By the same argument on the finite sum and exchangeability of the limit and the sum, we can show 1-MSD:
\begin{align*}
\lim_{t\to0}\EE\left(\frac{Z^T(\gamma(t))-Z^T(x)}{t}\right)^2 & = \lim_{t\to0}\sum_{l=0}^{T} a_l\frac{\left(f_l(\gamma(t))-f_l(\gamma(0))\right)^2}{t^2},\\
&=\sum_{l=0}^{T} a_l\lim_{t\to0}\frac{\left(f_l(\gamma(t))-f_l(\gamma(0))\right)^2}{t^2}<\infty,
\end{align*}
by the smoothness of $f_l$. For 2-MSD,
\begin{align*}
&\lim_{t\to0}\EE\left(\frac{D_VZ^T(\gamma(t))-D_VZ^T(x)}{t}\right)^2 \\
&= \lim_{t\to0}\frac{1}{t^2} \left[K^T_V(\gamma(t),\gamma(t))-2K^T_V(\gamma(t),x)+K^T_V(x,x)\right]\\
& = \lim_{t\to0}\sum_{l=0}^{T}a_l\frac{\left(\nabla f_l(V_{\gamma(t)})-\nabla f_l(V_x)\right)^2}{t^2}= \sum_{l=0}^{T}a_l\lim_{t\to0}\frac{\left(\nabla f_l(V_{\gamma(t)})-\nabla f_l(V_x)\right)^2}{t^2}<\infty
\end{align*}
by the smoothness of $f_l$ and $V$. 
\end{proof}
\section{Computational Details for Posterior Inference on \texorpdfstring{$D_VZ$}{DVZ} over \texorpdfstring{$\mathbb{S}^2$}{S2}}\label{sec:grad-posterior}
We use the truncated Legendre-Mat\'ern covariance (see \Cref{tab:S2_Kernels}) to derive the posterior for derivatives using \Cref{eqn:D_VZ_Sp,eq:ZD_VZ_Sp}. Let $a_l = (\alpha^2+l^2)^{-\nu-\frac{1}{2}}$ then, ${K^T}'(t)= \sum_{l=0}^{T}a_l P_l'(\cos t)(-\sin t)$ and ${K^T}''(t)= \sum_{l=0}^{T} a_l\left\{\sin^2 t \;P_l''(\cos t)-\cos t\; P_l'(\cos t)\right\}$. We use standard results in spherical harmonics: (a) $P_l'(x)=\frac{l}{x^2-1}(x P_l(x)-P_{l-1}(x))$ and (b) $(1-x^2)P_l''(x)-2xP_l'(x)+l(l+1)P_l(x) = 0$. Substituting (a) in ${K^T}'(t)$ yields
\begin{equation*}
    {K^T}'(t)= \sum\limits_{l=1}^{T} a_l\frac{l}{\sin t}\left\{\cos t\; P_l(\cos t)-P_{l-1}(\cos t)\right\}.
\end{equation*}
Substituting the recurrence relation in ${K^T}''(t)$ and simplifying we obtain
\begin{equation*}
    {K^T}''(t)= \sum\limits_{l=1}^{T} a_l\left[\left(-\frac{l\cos t}{\sin^2 t}\right)\left\{\cos t\; P_l(\cos t)-P_{l-1}(\cos t)\right\}-l(l+1)P_l(\cos t)\right].
\end{equation*}
For a point $x = (x^1, x^2, x^3) \in \mathbb{S}^2$, we will use the rotational vector field, also known as the longitude vector field: $V_x\coloneqq (-x^2,x^1,0)$. For $d=\arccos\langle x, x'\rangle$, we have:
\begin{equation*}
    \begin{split}
        \cov(Z(x), D_VZ(x'))=-\frac{{K^T}'(d)(-x^1{x'}^2+x^2{x'}^1)}{\{1-(x^1{x'}^1+x^2{x'}^2+x^3{x'}^3)^2\}^{\frac{1}{2}}},
    \end{split}
\end{equation*}
and $K_V(x, x)=-{K^T}''(0)(x^2{x'}^2+x^1{x'}^1)$. We evaluate ${K^T}''(0)$ as $\lim_{t\to 0}{K^T}''(t)$. For small $t$, $\cos t = 1- \frac{t^2}{2}+O(t^4)$, $\sin t = t+O(t^3)$ and $P_l(t)=1+P_l'(1)(t-1)+O((t-1)^2)$. Now $P_l'(1)=\frac{l(l+1)}{2}$ and $\cos t-1 = - \frac{t^2}{2}+O(t^4)$ hence, $P_l(\cos t) = 1-\frac{l(l+1)}{2}\frac{t^2}{2}+O(t^4)$ and $P_{l-1}(\cos t) = 1-\frac{(l-1)l}{2}\frac{t^2}{2}+O(t^4)$. Therefore, $\cos tP_l(\cos t)-P_{l-1}(\cos t) = -\frac{l+1}{2}t^2+O(t^4)$. Evaluating the limit, we have ${K^T}''(0)=-\frac{1}{2}\sum_{l=1}^{T}a_ll(l+1)$ resulting in 
\begin{equation*}
    K_V(x, x)=\left[\frac{1}{2}\sum_{l=1}^{T}a_l\,l(l+1)\right]\left({x^2}^2+{x^1}^2\right).
\end{equation*}
Using \Cref{eq:grad-posterior}, the conditional posterior
$D_VZ(x_0)\mid\;\Z,\theta\sim N(\mu_1, \Sigma_1)$ where the conditional mean, $\mu_1 = {C_{\Z, D_V\Z}}\T {C_{\Z,\Z}}^{-1}\Z$ and $\Sigma_1 = K_V(x_0,x_0)-{C_{D_VZ, \Z}}\T {C_{\Z,\Z}}^{-1}C_{\Z,D_VZ}$, where $K_V(x_0,x_0)$ is a scalar, $C_{\Z,D_VZ}=\left(\cov(Z(x_1),D_VZ(x_0)), \ldots, \cov(Z(x_n),D_VZ(x_0))\right)\T$, $C_{D_VZ,\Z}=(\cov(D_VZ(x_0), Z(x_1)), \ldots, \cov(D_VZ(x_0), Z(x_n))\T$ are $n\times 1$ vectors. Also, $\cov(Z(x),D_VZ(x'))=-\cov(D_VZ(x),Z(x'))$ implying $C_{D_VZ, \Z}=-C_{\Z,D_VZ}$.



\section{Barycentric Coordinates---Mesh Sampling and Interpolation}\label{sec: barycentric_sampling_interpolation}
We work with $Z:M\to \RR$ and $Z(x)\sim GP(\mu(x),K(\cdot;\theta))$. Generating random samples requires generating scattered locations on $M$. Subsequently, the eigen-functions need to be interpolated at these scattered locations. We use a \emph{linear} (P1) barycentric coordinate system that is defined using the vertices for triangles of the mesh, $M$ to achieve these goals. 
To generate $N$ scattered locations on $M$, 
\begin{enumerate}
    \item Compute triangle areas, $A_i$ and sampling weights, $p_{t_i}=\frac{A_i}{\sum_iA_i}$, $i=1,\ldots,N_T$;
    \item Draw a weighted random sample with replacement of triangle indices of size $N$ from $\{1,\ldots, N_T\}$ using the sampling weights, $p_{t_1},\dots, p_{t_{N_T}}$ and denote the triangle vertices of the $j$-th sample as $(v^j_{k_1}, v^j_{k_2},v^j_{k_3})$, $k_1, k_2, k_3\in\{1,\ldots,K\}$;
    \item Draw random samples $a^1_1,\ldots,a^1_{N}\stackrel{iid}{\sim}U(0,1)$, $a^2_1,\ldots,a^2_{N}\stackrel{iid}{\sim}U(0,1)$. If for any $j$, $a^1_j + a^2_j>1$ then, $a^1_j \leftarrow 1-a^1_j$ and $a^2_j\leftarrow1-a^2_j$ and define $a^3_j=1-a^1_j-a^2_j$, $j=1,\dots,N$ which are the barycentric coordinates;
    \item Get random locations $x_j = a^1_jv^j_{k_1}+a^2_jv^j_{k_2}+a^3_jv^j_{k_3}$, $j=1,\dots,N$ on $M$. 
\end{enumerate}
The \emph{barycentric coordinates} for the random sample are, $a_j=(a^1_j,a^2_j,a^3_j)$, $j=1,\dots,N$. Approximate eigen-functions at the point $x_j$ denoted by, $f_l(x_j)=a^1_jf_l(v^j_{k_1})+a^2_jf_l(v^j_{k_2})+a^3_jf_l(v^j_{k_3})$ which is the \emph{barycentric interpolation}. For more details see e.g., \cite{reuter2006laplace,brenner2008mathematical,botsch2010polygon}.

\section{Farthest Point Sampling (FPS)---Grids over Manifolds}\label{sec: farthest_point_sampling} 
We require a point cloud resembling an equally spaced grid in the Euclidean space for interpolating $D_VZ$ on $M$. To generate such a point cloud on $M$ we first generate a dense (large sized, i.e. $N>>N_T$) random sample (point cloud) on $M$ using the steps detailed in \Cref{sec: barycentric_sampling_interpolation}. To generate a grid-like point cloud consisting of $N_{G}$ points, we down-sample using FPS 
\citep[see e.g.,][]{eldar1997farthest,pauly2002efficient} we take the following steps:
\begin{enumerate}
    \item[(a)] Select a random point from the dense point cloud;
    \item[(b)] For each remaining point calculate its distance from the nearest already sampled point;
    \item[(c)] Choose the point that has the maximum distance to the selected point and add it to the grid;
    \item[(d)] Repeat steps (b) and (c) until $N_G$ points are obtained.
\end{enumerate}
We use Euclidean distances to perform the selection step. Future work can consider geodesic distances for $\MM$. We use $N_G=400$ for our experiments in \Cref{sec: bunny}.

\section{Surface Registration using Scattered Data}
The \Cref{fig:S1andS2,fig:bunny-grad-sim,fig: bunny_eigen} are a result of surface interpolation on $M$ which is a crucial step in visualizing the proposed methods in the manuscript. The surfaces are generated from partially observed data at $N$ scattered locations on $M$. We first \emph{scatter} the partially observed data to the vertices using the barycentric coordinates obtained from the sample---at vertex, $v_k$, $Z(v_k)=\frac{\sum_{j\in \mathcal{I}(v_k)}a_jZ(x_j)}{\sum_{j\in \mathcal{I}(v_k)}a_j}$, where $x_j$ are sampled locations whose triangles include vertex, $v_k$, $a_j$ is the barycentric weight of $x_j$ and $\mathcal{I}(v_k)$ is the set of samples contributing to $v_k$. We use surface splines to smooth $Z$ which involves solving, $\min_{Z^{\rm sm}}\sum_k w_k(Z^{\rm sm}(v_k)-Z(v_k))^2+\iota\int_\MM||\nabla Z^{\rm sm}||^2$ on $M$, where $\iota$ is the penalty term \citep[see e.g.][]{wahba1981spline}. The regularization involves an Euler-Lagrange equation and upon taking the first variation, requires the cotangent Laplacian, $\Delta$. We use the smooth interpolated $Z^{\rm sm}$ to generate surface plots for the manuscript. Future work can extend to methods resembling a combination of \cite{wahba1981spline,finley2024mba} to generate smooth surface interpolations on 
$\MM$.

\section{Computational Details for Surfaces (SB)}
Fitting the GP using a truncated kernel, $K(x,x')=\frac{\sigma^2}{C_{\nu,\alpha}(x)}\sum_{l=0}^{T}(\alpha^2+\lambda_l)^{-\nu-\frac{p}{2}}f_l(x)f_l(x')$, where $C_{\nu,\alpha}(x)=\sum_{l=0}^{T}(\alpha^2+\lambda_l)^{-\nu-\frac{p}{2}}f_l(x)^2$ depends on $x$, impacts the scaling factor. To avoid unnecessary complications in the ensuing calculations for the cross-covariance matrix (see 
\Cref{lem:ZD_VZ,eq:gp-grad}), we fit the GP with observation-level scaling, $C_{\nu,\alpha}(x_j)$, $j=1,\dots,N$ to obtain posterior estimates for $\sigma^2, \alpha$ and then set $\widetilde{C}_{\nu,\alpha}=\frac{1}{N}\sum_{i=1}^{N}C_{\nu,\alpha}(x_j)$. Also, we note from the expressions in \Cref{cly:ZD_VZMat,eq:grad-posterior} that the posterior mean of $D_VZ$ is scale-free (i.e., no need for $\sigma^2/C_{\nu,\alpha}$) and scaling is required only for the posterior variance where we use $\sigma^2/\widetilde{C}_{\nu,\alpha}$.

\begin{figure}[t]
    \centering
    \includegraphics[scale = 0.8]{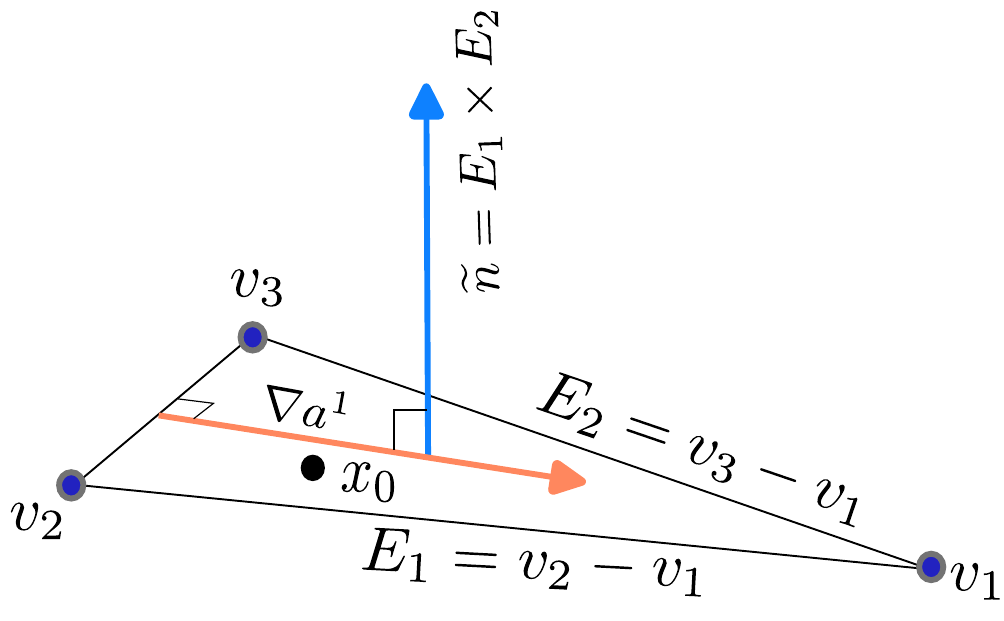}
    \caption{Components required for computing the differential of barycentric coordinates for a triangle within the mesh; $x_0$ is an arbitrary grid point where we seek to infer on $D_VZ$. The blue arrow indicates the normal to the triangle projecting out of the plane while the red arrow is the normal to the edge $(v_2, v_3)$ lying on the plane.}
    \label{fig: diff-bary}
\end{figure}

We provide supporting computational details for posterior inference on $D_VZ(x_0)$ at an arbitrary point, $x_0$ lying on a grid constructed using FPS (see \Cref{sec: farthest_point_sampling}) on $M$. For each triangle we use, $\gamma(t)=x_0+tV_{x_0}$ as the linear approximation to a geodesic starting from $x_0$ along $Z$, $\gamma(0)=x_0$ and $\gamma'(0) = V_{x_0}$. For a realization, $Z(x_j)$, the expressions in Corollary 3.7 yield
\begin{equation}\label{eq: cross-cov}
\begin{split}
    {\rm Cov}(Z(x_j), D_VZ(x_0)) &= \sum_{l=0}^T(\alpha^2+\lambda_l)^{-\nu-\frac{p}{2}}f_l(x_j)\,\<\nabla f_l(x_0), V_{x_0}\>,\\
    {\rm Cov}(D_VZ(x_0), D_VZ(x_0)) &=\sum_{l=0}^T(\alpha^2+\lambda_l)^{-\nu-\frac{p}{2}}\<\nabla f_l(x_0), V_{x_0}\>^2,
\end{split}
\end{equation}
up to a scaling constant. On our mesh, $M$, the eigen functions, $f_l(x_0)$ are barycentric interpolations as discussed in \Cref{sec: barycentric_sampling_interpolation} and hence, to obtain $\nabla f_l(x_0)$ we need \emph{differentials of barycentric coordinates} \citep[see e.g.,][]{akenine2021differential}. In the following paragraph we outline the strategy to obtain them for one grid point. Each triangle is flat and consequently, the inner-product in the Riemannian sense in expressions within \Cref{eq: cross-cov}, $\<\cdot,\cdot\>$ reduces to the usual dot product for vectors in Euclidean space. 

Let the triangle containing the grid point $x_0$ have the vertices $v_1,v_2,v_3\in\RR^3$ then, from the discussion in \Cref{sec: barycentric_sampling_interpolation} we have the interpolated eigen function,
\begin{equation*}
    f_l(x_0) = a^1(x_0)\,f_l(v_1)+a^2(x_0)\,f_l(v_2)+a^3(x_0)\,f_l(v_3), 
\end{equation*}
with $a^1(x_0)+a^2(x_0)+a^3(x_0)=1$ and $a^i(v_j)=\delta_{ij}$. Hence, $\nabla f_l(x_0) = (\nabla a^1(x_0))\,f_l(v_1)+ (\nabla a^2(x_0))\,f_l(v_2)+ (\nabla a^3(x_0))\,f_l(v_3)$. For our \emph{linear} (P1) barycentric system, $a^i(x)$, $i=1,2,3$, are linear functions implying $\nabla a^i(x)$ is constant for the face of the triangle. Define edges, $E_1 = v_2-v_1$, $E_2=v_3-v_1$ and $n=(2A)^{-1}(E_1\times E_2)$, where $\times$ denotes the cross-product and $2A=||E_1\times E_2||$ is the area of the triangle (see \Cref{fig: diff-bary}). We derive $\nabla a^1(x)$, as the others have similar expressions. Note that $a^1(x)=1$ at $v_1$ and 0 on opposite edges $v_2, v_3$. We denote, $\nabla a^1(x)\coloneqq\nabla a^1$. Hence, $\nabla a^1\perp (v_3-v_2)$ and $n\T\nabla a^1=0$ implying $\nabla a^1$ lies in the plane of the triangle. A direction in the triangular plane perpendicular to $v_3-v_2$ is given by, $n\times (v_3-v_2)$. Clearly, $\nabla a^1\propto n\times (v_3-v_2)$. Moving from the edge at $(v_2,v_3)$ to vertex $v_1$, $a^1$ increases by 1. Along the inward unit normal to the edge in the plane the change is: $||\nabla a^1||h_1=1$, where $h_1$ is the altitude from $v_1$ to the edge $(v_2,v_3)$. But, $A=\frac{1}{2}||v_3-v_2||h_1$ and $||n\times (v_3-v_2)||=||v_3-v_2||$ since, $n$ is the unit normal. Thus, $||\nabla a^1||\propto ||v_3-v_2||$. Plugging this into $||\nabla a^1||h_1=1$ yields, $\nabla a^1=(2A)^{-1}(n\times (v_3-v_2))$. Similarly, $\nabla a^2=(2A)^{-1}(n\times (v_1-v_3))$ and  $\nabla a^3=(2A)^{-1}(n\times (v_2-v_1))$. These are the terms required to evaluate $\nabla f_l(x_0)$, which can then be iterated for all grid points. We used P1 FEM functions for derivatives, future work involving inference on the curvature process would require quadratic (P2) functions.

\section{Computation and Code Availability}\label{sec: computation_and_code}
All computation was performed in the \texttt{R} statistical environment \citep[][]{rcite} on an Apple Mac Mini M4 Pro with 64GB of RAM and 14 cores running macOS Tahoe 26.2. The estimated run-time for the entire pipeline (loading the mesh $\to$ obtaining posterior inference on gradients) is 13.56 minutes. The 3D figures are generated using the Python API for Blender 4.5.4 LTS \citep[][]{blender50}. The computational subroutines and mesh files (PLY format) are available for public testing and use at the anonymous GitHub repository: \url{https://github.com/arh926/manifoldGPgrad}.

\bibliographystyle{plainnat}
\bibliography{aos-main} 
\end{document}